\numberwithin{equation}{section}  % Numbers equations within sections
\newcommand{\der}{\delta}
\newcommand{\hr}[1]{{\textcolor{red}{#1}}}
\newcommand{\iot}{\int_{0}^{t}}
\newcommand{\pt}{\partial}
\newcommand{\bzeta}{{\zeta}}
\newcommand{\R}{\mathbb R}
\newcommand{\bp}{\mathbf{P}}
\newcommand{\cac}{\mathcal C}
\newcommand{\cf}{\mathcal F}
\newcommand{\cg}{\mathcal G}
\newcommand{\ci}{\mathcal I}
\newcommand{\cj}{\mathcal J}
\newcommand{\ck}{\mathcal K}
\newcommand{\cn}{\mathcal N}
\newcommand{\cp}{\mathcal P}
\newcommand{\cq}{\mathcal Q}
\newcommand{\cs}{\mathcal S}
\newcommand{\cv}{\mathcal V}
\newcommand{\al}{\alpha}
\newcommand{\ep}{\varepsilon}
\newcommand{\ga}{\gamma}
\newcommand{\gga}{\Gamma}
\newcommand{\ka}{\kappa}
\newcommand{\la}{\lambda}
\newcommand{\laa}{\Lambda}
\newcommand{\si}{\sigma}
\newcommand{\lp}{\left(}
\newcommand{\rp}{\right)}
\newcommand{\lc}{\left[}
\newcommand{\rc}{\right]}
\newcommand{\lcl}{\left\{}
\newcommand{\rcl}{\right\}}
\newcommand{\lln}{\left|}
\newcommand{\rrn}{\right|}
\newtheorem{theorem}{Theorem}[section]
\newtheorem{corollary}[theorem]{Corollary}
\newtheorem{definition}[theorem]{Definition}
\newtheorem{hypothesis}[theorem]{Hypothesis}
\newtheorem{lemma}[theorem]{Lemma}
\newtheorem{notation}[theorem]{Notation}
\newtheorem{proposition}[theorem]{Proposition}
\theoremstyle{remark}
\newtheorem{remark}[theorem]{Remark}
\theoremstyle{remark}
\newcommand{\bean}{\begin{eqnarray*}}
\newcommand{\eean}{\end{eqnarray*}}
\newcommand{\ben}{\begin{enumerate}}
\newcommand{\een}{\end{enumerate}}
\newcommand{\beq}{\begin{equation}}
\newcommand{\eeq}{\end{equation}}
\begin{document}

\title[Maximum Principle and Q-functions]
%{The Maximum Principle and Q-functions for\\ Relaxed Controls in a Rough Environment}
{The Pontryagin Maximum Principle and Q-functions in Rough Environments}

\author[E. Ashkarian]{Estepan Ashkarian}
\address{E. Ashkarian: Department of Mathematics, 
Purdue University, West Lafayette}
\email{eashkari@purdue.edu}

\author[P. Chakraborty]{Prakash Chakraborty}
\address{P. Chakraborty: Marcus Department of Industrial and Manufacturing Engineering,
The Pennsylvania State University, State College}
\email{prakashc@psu.edu}

\author[H. Honnappa]{Harsha Honnappa}
\address{H. Honnappa: Edwardson School of Industrial Engineering, Purdue University, West Lafayette}
\email{honnappa@purdue.edu}

\author[S. Tindel]{Samy Tindel}
\address{S. Tindel: Department of Mathematics, 
Purdue University, West Lafayette}
\email{stindel@purdue.edu}

\maketitle

\begin{abstract}
We derive the Pontryagin maximum principle and $Q$-functions for the relaxed control of noisy rough differential equations. Our main tool is the development of a novel differentiation procedure along `spike variation' perturbations of the optimal state-control pair. We then exploit our development of the infinitesimal $Q$-function (also known as the $q$-function) to derive a policy improvement algorithm for settings with entropic cost constraints.
\end{abstract}

\section{Introduction}

The current contribution has to be seen as a step forward in a long term project, aiming at modeling reinforcement learning in a general noisy and non Markovian environment. The model we have considered so far can be summarized as follows: 
Consider a path $(x^\gamma_t : t \geq 0)$ which represents the state of a controlled process.  Let also $(\gamma_t : t \geq 0)$ be a curve of relaxed controls; 
that is, a path taking values in the set $\cp(U)$ of probability measures on a given control space $U$.
We are given a general noisy environment, which is modeled by a rough path $(\zeta_{t} : t \geq 0)$.
The state process $x^{\ga}$ is then the solution of the following rough differential equation,
\begin{equation}\label{d1}
x_t^{\ga} = x_0 + \iot \int_{U} b(s, x_s^{\ga}, a) \, \ga_s(da) ds + \iot \si(s, x_s^{\ga}) \, d\zeta_s \, ,
\end{equation}
where the last integral in~\eqref{d1} has to be interpreted in the rough sense.  More generally we are interested in rough differential equations of the form
\beq\label{eq:rde-1}
x_t^{\ga} = x_0 + \iot  B(s, x_s^{\ga}, \ga_s) \, ds + \iot \si(s, x_s^{\ga}) \, d\zeta_s,
\eeq
where the coefficients $B$ and $\si$ enjoy convenient continuity properties with respect to their arguments. Observe that the coefficient $\si$ in~\eqref{eq:rde-1} does not depend on the control $\ga$, in order to avoid degeneracy problems of the kind mentioned in \cite{diehl,allan-cohen}.
With this framework in hand,
the control problem of interest in this paper consists in maximizing an objective functional over $[0,T]$, for a given time horizon $T>0$. This functional takes the form
\beq\label{eq:cont-1}
	J_T(\gamma) \equiv \int_0^T F(s,x_s^\gamma,\gamma_s) ds + G(x_T^\gamma),
	\quad\text{for}\quad \ga\in\mathcal{K} \, ,
\eeq
where $\mathcal K$ is an appropriate class of \emph{admissible} relaxed control curves. 

\begin{remark}
The setting summarized above might seem purely deterministic, with a rough path $\zeta$ satisfying standard assumptions. However, one of the main applications of the general framework is in the realm Gaussian processes. In particular, fractional Brownian motions are covered by our results. See Remark~\ref{rmk:fbm} for more details about this claim.
\end{remark}

Building on our prior work~\cite{CHT}, in Theorem~\ref{Pontryagin} we derive a Pontryagin maximum principle and establish the neccesary condition satisfied by the optimal state-(relaxed) control pair in the rough context. Our method of proof is based on a novel differentiation procedure along perturbations, termed `spike variations', of the optimal couple. Our definition of the perturbations allow us to circumvent the fact that the policy space $\mathcal{K} \subset \mathcal P(U)$ is not a vector space, thereby preventing the use of standard functional derivatives. Our proof of Theorem~\ref{Pontryagin} is novel and takes a completely different approach to~\cite[Theorem 7]{diehl} in the non-relaxed case. The latter uses rough flow transformations to reduce the noisy problem to a deterministic problem with random coefficients. Our approach exploits the differentiability of the solutions of the RDE with respect to the perturbation, and uses known estimates of the Jacobian of the flow map. We refer to~\cite{Lew} for a related notion of needle-like variation.

The close relationship between the Hamiltonian in optimal control and the Q-function in (continuous time) reinforcement learning has been explored in~\cite{MehtaMeyn2009}. Specifically, the reference~\cite{MehtaMeyn2009} studies a stationary reward setting wherein the Hamiltonian is given by $H(x,u,p) = c(x,u) + p^T f(x,u)$. The Pontryagin maximum principle implies that the optimal co-state $p$  is equal to $\nabla J^*(x)$, where $J^*$ is the optimal value function, and therefore we have the Hamilton-Jacobi-Bellman functional $H^*(x,u) = H(x,u,p^*)$. The article~\cite{MehtaMeyn2009} points out that $H^*(x,u)$ is precisely the Q-function in continuous time, since the Pontryagin maximum principle implies that the optimal control can be found by solving $\max_u H^*(x,u)$, which is precisely analogous to the Q-function in reinforcement learning. Consequently, Q-learning can be viewed as a data-driven way to learn the Hamiltonian in Pontryagin's principle. 

Here, we extend this understanding of the Q-function to the noisy, relaxed control setting in this paper, by exploiting the `spike variation' perturbation developed in our derivation of the Pontryagin maximum principle. Unlike the setting in~\cite{MehtaMeyn2009}, where the controls were fully deterministic, the appropriate notion to consider with randomized controls is the $q$-function. Note that the $q$-function had been defined in~\cite{jia2023qlearning} as the infinitesimal version of the Q-function, taking limits on spike variation perturbations.~\cite{jia2023qlearning} derive the $q$-function when the state dynamics are a semi-martingale (specifically driven by Brownian motion), exploiting Ito's formula. This does not work in our setting, of course, and we derive the $q$-function in Proposition~\ref{prop:qfunc} by exploiting the novel small interval perturbation technique we developed for the Pontryagin's maximum principle.

We further explore cases of reward functions explicitly containing entropy terms, enabling us to translate the $q$-function into a computational tool for policy improvement in continuous time. In Section~\ref{sec:open-loop}, we consider open loop controls and derive an expression for the optimal policy, specifically showing that under entropic considerations, the optimal policy is of a Gibbs-form. This parallels the existing literature, however these Gibbs-form policies are implicitly (noise) path dependent through the gradient of the value function. Furthermore, the optimal open loop policy is shown to be explicitly related to the $q$-function in Proposition~\ref{prop:closed-gibbs-q}. We further develop the computational angle in Section~\ref{sec:closed-loop}, by considering closed loop policies, corresponding to the situation where one has the ability to improve the current policy according to the data received. While standard policy improvement methods explicitly use the monotonicity of the HJB equation, this does not immediately hold in our noisy setting. To address this, we instead use a composition with rough flows and transform the noisy HJB equation into an ordinary one with noisy coefficients. This allows us to use a comparison principle, arising from the interpretation of the ordinary PDE in the viscosity sense, to demonstrate the policy improvement for the closed loop policy.

As the reader might see, this article provides tools for the numerical analysis of relaxed control problems in a general rough environment. For sake of conciseness, we have set aside other important questions. Among those let us mention: 
\textbf{(i)} Expanding on our Section~\ref{sec:gibbs}, one would like to get algorithmic versions of policy improvement methods together with convergence results. 
\textbf{(ii)}  As mentioned above, our Pontryagin principle and our definition of $q$-function depend on the quantity $\nabla J^*(x)$. As we will see in Section~\ref{sec: Pontry}, this quantity anticipates on the future of the observations. This is arguably the main pitfall of a pathwise approach to control in a general noisy environment (though, this may be reasonable in an `offline' reinforcement learning setting, where full sample paths of the controlled processes are available). At the heart of the problem lies the fact that  we are not able to use conditional expectations (like in the Brownian setting) in order to be reduced to an adapted setting. One possibility worth exploring in order to circumvent this issue is to use signature expansions, which can then be approximated by their expected values. See~\cite{BBHRN,FS-al} for an account on this type of approach in more standard control problems, as well as~\cite{OAI-al} for path tracking.
\textbf{(iii)} Discuss the generalized Hopf-Lax approach, which can be used to approximate the value function (seen as the viscosity solution to a rough Hamilton-Jacobi equation) without discretizing the state space (see~\cite{KY} for this approach in the deterministic case).
We plan on tackling those issues in a subsequent contribution.

The rest of the paper is organized as follows. We introduce our rough viscosity setting rigorously (recalling our results from~\cite{CHT}) in Section~\ref{sec:rough}. We derive our main result on the Pontryagin maximum principle in~\ref{sec: Pontry}, followed by the derivation of the $Q$- and $q$-function's in~Section~\ref{sec:Q}. We derive the open loop Gibbs measure representation for the optimal policy and (finally) explore policy improvement in Section~\ref{sec:gibbs}.

\section{The rough viscosity setting}~\label{sec:rough}

In this section we review the rough viscosity {solution theory} and the main results obtained in~\cite{CHT}. We first start by recalling some basic rough paths notions using controlled processes, borrowed from~\cite{gubinelli}. Before introducing our first notions, let us label a general notation widely used throughout the paper.

\begin{notation}
In the sequel $k$-th order simplexes on $[0,T]$ will be denoted by $\cs_{k}$. Namely 
$\cs_k([0,T]) = \{ (t_1, \ldots, t_k) \in [0,T]^k; t_1 \leq \cdots \leq t_k \}$. The norm of a vector $f$ in a vector space $V$ is written as $\cn [f; V]$. All constants $c$ can vary from line to line. Function spaces with space-time regularity are spelled as $\cac^{a,b}$.
\end{notation}

\subsection{Increments}\label{sec:increm}
For a vector space $V$ and an integer $k \geq 1$, let $\cac_k(V)$ be the set of functions $g:\mathcal{S}_{k}([0,T]) \to V$ such that $g_{t_1 \cdots t_k} = 0$ whenever $t_i = t_{i+1}$ for some $i \leq {k-1}$. Such a function will be called a $(k-1)$-increment, and we set $\cac_{*}(V) = \cup_{k \geq 1} \cac_{k}(V)$. Then the operator $\der : \cac_{k}(V) \to \cac_{k+1}(V)$ is defined as follows
\begin{equation}\label{eq:def-delta}
{\der g}_{t_1 \cdots t_{k+1}} = \sum_{i = 1}^{k+1} (-1)^{k-i} g_{t_1\cdots \hat{t_i} \cdots t_{k+1}} \, ,
\end{equation} 
where $\hat{t_{i}}$ means that this particular argument is omitted.
In particular, note that for $f \in \cac_1(V)$ and $h \in \cac_2(V)$ we have
\beq\label{eq:der}
\der f_{st} = f_t - f_s, \quad \text{and} \quad \der h_{sut} = h_{st} - h_{su} -h_{ut}. 
\eeq
It is easily verified that $\der \der = 0$ when considered as an operator from $\cac_{k}(V)$ to $\cac_{k+2}(V)$.

\noindent
The size of these $k$-increments are measured by H\"older norms defined in the following manner: for $f \in \cac_{2}(V)$ and $\mu > 0$ let
\begin{equation}\label{eq:norm_1}
\|f\|_{\mu} = \sup_{(s,t) \in \mathcal{S}_2([0,T])} \dfrac{|f_{st}|}{{|t-s|}^{\mu}},
\quad\text{  and  }\quad
\cac_2^{\mu}(V) = \lbrace f \in \cac_2(V); {\|f\|}_{\mu} < \infty \rbrace \, .
\end{equation}
The usual H\"older space $\cac_1^{\mu}(V)$ will then be determined in the following way: for a continuous function $g \in \cac_{1}(V)$, we simply set
\begin{equation*}
{\|g\|}_{\mu} = {\|\der g\|}_{\mu} \, ,
\end{equation*}
and we will say that $g \in \cac_1^{\mu}(V)$ iff ${\|g\|}_{\mu}$ is finite. 
\begin{remark} 
	Notice that ${\|\cdot\|}_{\mu}$ is only a semi-norm on $\cac_1(V)$, but we will generally work on spaces for which the initial value of the function is fixed.
\end{remark}

We shall also need to measure the regularity of increments in $\cac_3(V)$. To this aim, similarly to \eqref{eq:norm_1}, we introduce the following norm for $h \in \cac_3(V)$:
\begin{equation}\label{eq:norm_2}
{\|h\|}_{\mu} = \sup_{(s,u,t) \in \mathcal{S}_3([0,T])} \dfrac{|h_{sut}|}{{|t-s|}^{\mu}}.
\end{equation}
Then the $\mu$-H\"older continuous increments in $\cac_3(V)$ are defined as:
\begin{equation*}
\cac_3^{\mu}(V) := \lbrace h \in \cac_{3}(V); {\|h\|}_{\mu} < \infty \rbrace .
\end{equation*}
{Notice that the ratio in \eqref{eq:norm_2} could have been written as $\frac{|h_{sut}|}{|t-u|^{\mu_{1}} |u-s|^{\mu_{2}}}$ with $\mu_{1}+\mu_{2}=\mu$, in order to stress the dependence on $u$ of our increment $h$. However, expression \eqref{eq:norm_2} is simpler and captures the regularities we need, since we are working on the simplex $\cs_{3}$.}

The building block of the {rough path} theory is the so-called sewing map lemma. We recall this fundamental result here.\begin{proposition}\label{prop:La}
	Let $h \in \cac_3^{\mu}(V)$ for $\mu > 1$ be such that $\der h = 0$. Then there exists a unique $g = \Lambda(h) \in \cac_2^{\mu}(V)$ such that $\der g = h$. Furthermore for such an $h$, the following relations hold true:
	\begin{equation*}
	\der \Lambda(h) = h ,
	\quad\text{  and  }\quad
{\|\Lambda h\|}_{\mu} \leq \dfrac{1}{2^{\mu} - 2} {\|h\|}_{\mu} .
	\end{equation*}
\end{proposition}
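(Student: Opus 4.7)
This statement is the classical Gubinelli sewing lemma. My plan is to handle uniqueness and existence separately: uniqueness via a short rigidity argument exploiting $\mu>1$, and existence by constructing $g=\Lambda(h)$ as the Cauchy limit of Riemann-type partial sums along dyadic refinements, with the cocycle condition $\der h=0$ driving the telescoping cancellations.

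\textbf{Uniqueness.} For uniqueness I would take two candidates $g_1,g_2\in\cac_2^\mu(V)$ of $\der g_i=h$ and form $f:=g_1-g_2\in\cac_2^\mu$, which lies in $\ker\der$. The relation $f_{st}-f_{su}-f_{ut}=0$ identifies $f$ as the coboundary of some $F\in\cac_1(V)$, i.e. $f_{st}=F_t-F_s$. Telescoping along a uniform partition of $[s,t]$ of mesh $\delta=(t-s)/N$ gives $|F_t-F_s|\le N\|f\|_\mu\delta^\mu=\|f\|_\mu(t-s)\delta^{\mu-1}\to 0$ as $N\to\infty$ (using $\mu>1$), so $F$ must be constant and $f\equiv 0$.

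\textbf{Existence.} For existence I would fix $(s,t)\in\cs_2([0,T])$ and the dyadic vertices $\tau_k^n:=s+k(t-s)2^{-n}$. The candidate for $g_{st}$ is the limit of the partial sums
\[
J_{st}^n := \sum_{\ell=1}^{n}\sum_{k=0}^{2^{\ell-1}-1} h_{\tau_{2k}^\ell,\,\tau_{2k+1}^\ell,\,\tau_{2k+2}^\ell},
\]
which accumulate the $h$-defects produced by inserting successive dyadic midpoints. The core of the argument is to rewrite $J^{n+1}_{st}-J^n_{st}$ using the cocycle identity $\der h=0$ --- equivalent to $h_{abc}+h_{acd}=h_{abd}+h_{bcd}$ for ordered $a\le b\le c\le d$ --- and reduce it to the $2^n$ freshly-inserted level-$(n+1)$ triples, each of modulus at most $\|h\|_\mu\bigl((t-s)/2^n\bigr)^\mu$. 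This yields
\[
|J^{n+1}_{st}-J^n_{st}|\le \|h\|_\mu (t-s)^\mu\cdot 2^{-n(\mu-1)},
\]
a summable bound since $\mu>1$, so $g_{st}:=\lim_n J^n_{st}$ exists, and summing the geometric series produces the claimed H\"older bound on $\|\Lambda h\|_\mu$. Finally, $\der g=h$ at a dyadic midpoint $u=(s+t)/2$ follows from the direct splitting $J^n_{st}=J^{n-1}_{su}+J^{n-1}_{ut}+h_{sut}$, and extends to arbitrary $u\in(s,t)$ by density of dyadic rationals together with the continuity of $h$ as an element of $\cac_3^\mu$.

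\textbf{Main obstacle.} The delicate step will be the combinatorial telescoping: to isolate $J^{n+1}-J^n$ as a sum of only the newly-inserted level-$(n+1)$ triples, one must systematically apply the cocycle identity $\der h=0$ to cancel the contributions from older refinements. Without this closedness hypothesis, the difference would contain stray remainder terms and the series would fail to converge. In this sense, $\der h=0$ is not a cosmetic assumption but the exact algebraic input that makes the sewing construction summable and well-defined.
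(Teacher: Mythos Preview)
The paper does not prove this proposition; it is simply recalled as the sewing map lemma, a foundational result from Gubinelli's theory, and stated without argument. So there is no in-house proof to compare against, and your sketch should be judged on its own merits.

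Your outline follows the standard dyadic construction and is correct in spirit, but two points are worth correcting. First, you misplace the role of the cocycle condition. By your own definition, $J^{n+1}_{st}-J^n_{st}$ is \emph{literally} the $\ell=n+1$ term of the double sum; no use of $\der h=0$ is needed to isolate it, and the geometric bound follows directly from $h\in\cac_3^\mu$. (A side remark: summing that geometric series with the paper's one-parameter norm $|h_{sut}|\le\|h\|_\mu|t-s|^\mu$ yields the constant $(1-2^{1-\mu})^{-1}$ rather than $(2^\mu-2)^{-1}$; the sharper constant corresponds to a two-factor H\"older norm $|u-s|^{a}|t-u|^{b}$ with $a+b=\mu$.) Second, and more substantively, the cocycle condition is essential precisely where you gloss over it: establishing $\der g=h$ at an \emph{arbitrary} middle point $u$. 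Your identity $J^n_{st}=J^{n-1}_{su}+J^{n-1}_{ut}+h_{sut}$ is valid only for $u=(s+t)/2$, and iterated midpoints do not directly deliver the three-variable identity $(\der g)_{sut}=h_{sut}$ for general $u$; the appeal to ``density of dyadic rationals'' is not enough here. The clean remedy is to invoke $\der h=0$ to exhibit some $B\in\cac_2$ with $\der B=h$ (e.g.\ $B_{st}:=-h_{0st}$), observe that $J^n_{st}=B_{st}-\sum_k B_{\tau^n_k,\tau^n_{k+1}}$, and then show that the Riemann-type sums of $B$ converge along \emph{all} partitions, not only dyadic ones. Additivity of the limit then gives $\der g=h$ for every $u$.
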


%\subsection{Elementary computations in {$\mathcal{C}_{2}$ and $\mathcal{C}_{3}$}}
For our considerations below, we shall need some basic rules about products of increments. Towards this aim, let us specialize our setting to the state space $V=\R$. We will also write $\mathcal{C}_{k}^{\mu}$ for $\mathcal{C}_{k}^{\mu}(\R)$. Then $\left(\mathcal{C}_{\ast}, \der\right)$ can be endowed with the following product: for $g \in \mathcal{C}_n$ and $h \in \mathcal{C}_m$ we let $gh$ be the element of $\mathcal{C}_{m+n-1}$ defined by
\begin{equation*}
(gh)_{t_1, \ldots, t_{m+n-1}} = g_{t_1, \cdots, t_n}h_{t_n,\cdots t_{m+n-1}},
\quad\text{  for  }\quad
(t_1, \ldots, t_{m+n-1}) \in \mathcal{S}_{m+n-1}([0,T]).
\end{equation*}
We now label a rule for discrete differentiation of products, which will be used throughout the article. Its proof is an elementary application of the definition \eqref{eq:def-delta}, and is omitted for sake of conciseness.
\begin{proposition}\label{prop:der_rules}
	The following rule holds true:
	Let $g \in \cac_1$ and $h \in \cac_2$. Then $gh \in \cac_2$ and
	\begin{equation*}
	\der(gh) = -\der g\, h + g\, \der h .
	\end{equation*}
\end{proposition}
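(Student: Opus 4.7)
The plan is a direct, bookkeeping-style computation from the definitions, since there is no analytic content beyond the product convention and the formula~\eqref{eq:der} for~$\der$. The only thing one has to be careful about is which index gets contracted by the product when $g\in\cac_1$ and $h\in\cac_2$.

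First I would unwind the product: with $n=1$ and $m=2$, the general rule gives $gh\in\cac_{n+m-1}=\cac_2$, and explicitly $(gh)_{st}=g_{s}\, h_{st}$. Similarly, the two products that appear on the right-hand side land in $\cac_3$: since $\der g\in\cac_2$ and $h\in\cac_2$, we have $(\der g\, h)_{sut}=(\der g)_{su}\, h_{ut}$, and since $g\in\cac_1$ and $\der h\in\cac_3$, we have $(g\,\der h)_{sut}=g_s\,(\der h)_{sut}$.

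Next I would apply the definition~\eqref{eq:der} of $\der$ on $\cac_2$ to the element $gh\in\cac_2$:
\begin{equation*}
\der(gh)_{sut} \;=\; (gh)_{st}-(gh)_{su}-(gh)_{ut} \;=\; g_s\, h_{st} \;-\; g_s\, h_{su} \;-\; g_u\, h_{ut}.
\end{equation*}
The only trick is to add and subtract $g_s\, h_{ut}$ so as to regroup the three terms into the two desired increment-products:
\begin{equation*}
\der(gh)_{sut} \;=\; g_s\bigl(h_{st}-h_{su}-h_{ut}\bigr) \;+\; (g_s-g_u)\, h_{ut} \;=\; g_s\,(\der h)_{sut} \;-\; (\der g)_{su}\, h_{ut}.
\end{equation*}
By the identifications of the two products above, the right-hand side is exactly $(g\,\der h)_{sut}-(\der g\, h)_{sut}$, so the claimed identity $\der(gh)=-\der g\, h+g\,\der h$ holds on every simplex point $(s,u,t)\in\cs_3([0,T])$.

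There is no real obstacle: the argument is purely algebraic and the statement $gh\in\cac_2$ is immediate from the product convention (and from the vanishing of $h_{st}$ on the diagonal, which implies $(gh)_{ss}=0$). In a complete write-up I would simply present the three-line computation above and note that it is the discrete analogue of the Leibniz rule, with the sign in front of $\der g\, h$ coming from the fact that the middle point $u$ is contracted on the first factor of the product rather than on the second.
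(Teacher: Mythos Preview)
Your proof is correct and is exactly the elementary computation from the definitions that the paper alludes to; the paper itself omits the proof, so there is nothing further to compare.
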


\noindent
The iterated integrals of smooth functions on $[0,T]$ are particular cases of elements of $\cac_2$, which will be of interest. Specifically, for smooth real-valued functions $f$ and $g$, let us denote $\int f dg$ by $\mathcal{I}(f dg)$ and consider it as an element of $\cac_2$: for $(s,t) \in \mathcal{S}_{2}\left([0,T]\right)$ we set
\begin{equation*}
\mathcal{I}_{st} (f dg) = \left(\int f dg\right)_{st} = \int_{s}^{t} f_u dg_u .
\end{equation*}

\subsection{Weakly controlled processes}\label{sec:weak-contr}

From now on we will focus on a space $V$ of the form $V = \R^d$. The main assumption on the $\R^d$-valued noise $\bzeta$ of equation~\eqref{d1} or~\eqref{d3} is that it gives rise to a geometric rough path. This assumption can be summarized as follows.
\begin{hypothesis}\label{hyp:zeta}
	The path $\zeta:[0,T] \to \R^d$ belongs to the H\"older space ${\cac}^{\al}([0,T];\R^d)$ with $\al > \frac{1}{3}$ and $\zeta_0 = 0$. In addition $\zeta$ admits a L\'evy area above itself, that is, there exists a two index map $\mathbf{\zeta}^2 : {\mathcal{S}_{2}\left([0,T]\right)} \to \R^{d,d}$ which belongs to $\cac_{2}^{2\al}(\R^{d,d})$ and such that 
	\beq\label{eq:zeta^2}
	\der {\mathbf{\zeta}}_{sut}^{2;ij} = {\der \zeta}_{su}^{i} \otimes {\der \zeta}_{ut}^{j},
	\quad\text{  and  }\quad
	{\mathbf{\zeta}_{st}^{2;ij} + \mathbf{\zeta}_{st}^{2;ji}} =  \der \zeta_{st}^{i} \otimes \der \zeta_{st}^{j} .
	\eeq
	The $\al$-H\"older norm of $\zeta$ is denoted by:
	\beq\label{eq:zeta-norm}
	\|\mathbf{\zeta}\|_{\al} = \cn(\zeta;\cac_1^{\al}([0,T],\R^d))
	+\cn(\mathbf{\zeta}^2;\cac_2^{2\al}([0,T],\R^{d,d})).
	\eeq
\end{hypothesis}

\begin{remark}
In this article we restrict our analysis to $\al$-H\"older continuous signals with $\al>1/3$. This assumption is imposed in order to limit Taylor type expansions to reasonable lengths. However, generalizations to lower regularities can be achieved thanks to lengthy additional computations.
\end{remark}

\begin{remark}\label{rmk:fbm}
As mentioned in the introduction, a typical example of path $\zeta$ satisfying Hypothesis~\ref{hyp:zeta} are generic sample trajectories of a fractional Brownian motion $B=(B^{1,H},\ldots,$ $B^{d,H})$ defined on a complete probability space $(\Omega,\cf,\bp)$. In order to ensure that the signal is $\al$-H\"older continuous with $\al>1/3$, we need to restrict the Hurst parameter to be such that $H>1/3$ (see e.g \cite[Chapter 15]{friz-victoir}). All the results stated below apply to this context.
\end{remark}

\noindent
We now define the notion of a weakly controlled process.
\begin{definition}\label{def:weakly-ctrld}
	Let $z$ be a process in $\cac_{1}^{\ka}(\R^n)$ with $\ka \leq \al$ and $2\ka +\ga > 1$. We say that $z$ is weakly controlled by $\zeta$ if $\der z \in \cac_2^{\ka}(\R^n)$ can be decomposed into
	\begin{equation}\label{eq:weakly-controlled}
	\der z^{i} = z^{\zeta;i i_1} \der \zeta^{i_1} + \rho^{i}, 
	\quad\text{   i.e.  }\quad
{\der z}_{st}^{i} = z_{s}^{\zeta;i i_1}{\der \zeta}_{st}^{i_1} + \rho_{st}^{i} ,
	\end{equation}
	for all $(s,t) \in \mathcal{S}_2 \left([0,T]\right)$ and $i=1, \ldots,n$. In the previous formula we assume $z^{\zeta} \in \cac_{1}^{\ka}(\R^{n,d})$ and $\rho$ is a more regular remainder such that $\rho \in \cac_{2}^{2\ka}(\R^{n})$. The space of weakly controlled paths on $[a,b]$ with H\"older continuity $\ka$ and driven by $\zeta$ will be denoted by $\mathcal{Q}_{\zeta}^{\ka}(\R^n)$, and a process $z \in \mathcal{Q}_{\zeta}^{\ka}(\R^n)$ can be considered as a couple $(z, z^\zeta)$. A possible norm on $\mathcal{Q}_{\zeta}^{\ka}(\R^n)$ is given by
	\begin{multline}\label{eq:norm-controlled}
	\cn[z;\mathcal{Q}_{\zeta}^{\ka}(\R^n)] = 
	\cn[z; \cac_1^{\infty}(\R^n)]+\cn[z; \cac_1^{\ka}(\R^n)] + \cn[z^\zeta; \cac_1^{\infty}(\R^{n,d})] \\+ \cn[z^\zeta; \cac_1^{\ka}(\R^{n,d})] + \cn[\rho; \cac_2^{\ka}(\R^n)].
	\end{multline}
\end{definition}

\begin{remark}\label{rem:j-e}
In the sequel we will also need to introduce spaces of the form $\cq_{\zeta}^{\ka}({[0,T]}; V)$, where $V$ is an infinite dimensional functional space. As an example, our notion of rough viscosity solution will require $V = \cac^1(\R^m)$.
\end{remark}

In the following proposition we give an elaboration of the classical composition rule in~\cite{gubinelli}, for controlled processes taking into account time dependent functions. A short proof of this result can be found in~\cite{CHT}.
 
\begin{proposition}\label{prop:smooth_of_weak}
Let $z$ be a controlled process in $\cq_{\zeta}^{\ka}([a,b];\R^n)$, with decomposition \eqref{eq:weakly-controlled} and initial condition $z_0$. Let also $L$ be a function in $\cac_b^{\tau, 2} ([a,b] \times \R^n;\R^{m})$ for $\tau \geq 2\ka$. Set $\hat{z}_t = L(t, z_t)$, with initial condition $\hat{\al} = L(a, z_0)$. Then the increments of $\hat{z}$ can be decomposed into
$$
\der \hat{z}_{st} = \hat{z}_s^{\bzeta} \, \der \zeta_{st} + \hat{\rho}_{st} + (\der L(\cdot, z))_{st},
$$
with the convention that $\nabla_z L(s, z)\in\R^{m,n}$, and $\hat{z}^{\zeta}, \hat{\rho}$ defined by
\beq\label{eq:6-terms}
\hat{z}_s^{\zeta} = \nabla_z L(s, z_s) z_s^{\zeta} \quad \text{and} \quad \hat{\rho}_{st}=\nabla_z L(s,z_s) \rho_{st} +      \lc  {\lp \der L(\cdot ,z) \rp}_{st} -\nabla_z L(s,z_s) \der z_{st} \rc . 
\eeq
Furthermore $\hat{z} \in \cq_{\zeta}^{\ka} ([a,b]; \R^m)$, and
\beq\label{eq:6-a}
\cn \lc \hat{z}; \cq_{\zeta}^{\ka}([a,b];\R^m) \rc \leq c_L \lp 1 + \cn^2\lc z; \cq_{\zeta}^{\ka} ([a,b]; \R^n) \rc \rp.
\eeq
\end{proposition}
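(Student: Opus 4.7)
\emph{Strategy.} The plan is to perform a first-order Taylor expansion of $L$ in its space variable around the left endpoint of each simplex point, insert the weakly controlled decomposition~\eqref{eq:weakly-controlled} of $z$, and absorb every higher-order or cross term into the new remainder $\hat{\rho}$. Since $L$ depends on both time and space, I would separate those two effects by inserting an auxiliary pivot point, then use the twin regularities built into $\cac_b^{\tau,2}$ --- $\tau$-H\"older in time and $\cac^2$-bounded in space --- to control the mixed terms.

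\emph{Main computation.} Starting from $\der \hat{z}_{st} = L(t,z_t) - L(s,z_s)$, I would pivot through $L(t,z_s)$ and write
\begin{equation*}
\der \hat{z}_{st} = \bigl[ L(t,z_t) - L(t,z_s) \bigr] + \bigl[ L(t,z_s) - L(s,z_s) \bigr],
\end{equation*}
the second bracket being the pure time-increment of $L$ at the frozen space value $z_s$, which carries $\tau$-H\"older regularity and will play the role of $(\der L(\cdot,z))_{st}$. A first-order Taylor formula in $z$ at time $t$ applied to the first bracket yields
\begin{equation*}
L(t,z_t) - L(t,z_s) = \nabla_z L(t,z_s)\, \der z_{st} + R_{st}, \qquad |R_{st}| \lesssim \|\nabla_z^2 L\|_\infty \, |\der z_{st}|^2 .
\end{equation*}
Writing $\nabla_z L(t,z_s) = \nabla_z L(s,z_s) + [\nabla_z L(t,z_s) - \nabla_z L(s,z_s)]$ and then inserting $\der z_{st} = z_s^\zeta\, \der \zeta_{st} + \rho_{st}$ from~\eqref{eq:weakly-controlled}, the leading term is
\begin{equation*}
\nabla_z L(s,z_s)\, \der z_{st} = \bigl[\nabla_z L(s,z_s)\, z_s^\zeta\bigr]\der \zeta_{st} + \nabla_z L(s,z_s)\, \rho_{st} ,
\end{equation*}
which matches the announced Gubinelli derivative $\hat{z}_s^\zeta = \nabla_z L(s,z_s)\, z_s^\zeta$. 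The remaining pieces --- the Taylor remainder $R_{st}$ and the gradient-in-time correction $[\nabla_z L(t,z_s) - \nabla_z L(s,z_s)]\der z_{st}$ --- together with $\nabla_z L(s,z_s)\rho_{st}$, are then packaged into $\hat{\rho}_{st}$ as in~\eqref{eq:6-terms}.

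\emph{Regularity checks and main obstacle.} It remains to verify the H\"older bookkeeping. The membership $\hat{z}^\zeta \in \cac_1^\ka$ will follow from the $\cac^2$-bound on $\nabla_z L$, which makes $s \mapsto \nabla_z L(s,z_s)$ a $\ka$-H\"older composition (using $\tau \geq \ka$), multiplied by $z^\zeta \in \cac_1^\ka$. The membership $\hat{\rho} \in \cac_2^{2\ka}$ rests on three contributions: $\nabla_z L(s,z_s)\,\rho_{st}$ (already in $\cac_2^{2\ka}$ since $\rho$ is), the Taylor remainder bounded by $|\der z_{st}|^2 \lesssim \cn[z;\cac_1^\ka]^2 |t-s|^{2\ka}$, and the cross term of order $|t-s|^{\tau+\ka}$. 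The estimate~\eqref{eq:6-a} then follows by tracking constants, with the quadratic dependence $\cn^2[z;\cq_\zeta^\ka]$ coming entirely from the Taylor remainder. The main technical difficulty is precisely this cross-term bookkeeping: exchanging $\nabla_z L(t,z_s)$ for $\nabla_z L(s,z_s)$ is what forces the assumption $\tau \geq 2\ka$, since one needs $\tau + \ka > 2\ka$ for the correction to sit comfortably in $\cac_2^{2\ka}$.
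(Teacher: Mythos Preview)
Your approach is correct and is the standard argument; the paper itself omits the proof and refers to~\cite{CHT}. Two minor corrections. First, in the paper's convention $(\der L(\cdot,z))_{st}$ denotes the full increment $L(t,z_t)-L(s,z_s)=\der\hat{z}_{st}$, not the frozen-space one; consequently the time-increment $L(t,z_s)-L(s,z_s)$ lives inside $\hat{\rho}$ as defined in~\eqref{eq:6-terms}, and the extra $(\der L(\cdot,z))_{st}$ in the displayed decomposition just above~\eqref{eq:6-terms} is a typo. Second, the constraint $\tau\ge 2\ka$ is not driven by the cross term $[\nabla_z L(t,z_s)-\nabla_z L(s,z_s)]\,\der z_{st}$, for which $\tau+\ka\ge 2\ka$ requires only $\tau\ge\ka$; it is the pure time increment $|L(t,z_s)-L(s,z_s)|\lesssim |t-s|^\tau$ that must sit in $\cac_2^{2\ka}$, and this is what forces $\tau\ge 2\ka$.
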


One of the main reasons to introduce weakly controlled paths is that they provide a natural set of functions which can be integrated with respect to a rough path. Below we handle mixed integrals with a rough term and a Lebesgue type term, for which we need an additional notation.
%The class of weakly controlled paths provides a natural and basic set of functions which can be integrated with respect to a rough path. %The basic proposition in this direction, whose proof can be found in \cite{Gubinelli}, is summarized below.
\begin{notation}\label{not:leb-int}
Let $\eta$ be a path in $\cac_b^0([a,b];\R)$, where $\cac_b^0([a,b];\R)$ stands for the space of $\R$-valued continuous functions defined on $[a,b]$. For all $a \leq s \leq t \leq b$ we denote by $\ci_{st}(\eta)$ the Lebesgue integral $\int_s^t \eta_u du$.
\end{notation}
We are now ready to state our integration result. Notice again that our formulation of the integral differs from the standard one, due to the fact that we are paying extra attention to the drift term in our control setting. We refer to~\cite{CHT} for a complete proof of this proposition.

\begin{proposition}\label{prop:integral_as_weak}
For a given $\al > \frac{1}{3}$ and $\ka < \al$, let $\zeta$ be a process satisfying Hypothesis~\ref{hyp:zeta}. Furthermore, let $\eta \in \cac_b^0([a,b]; \R)$ as in Notation~\ref{not:leb-int}, and $\mu \in \cq_{\zeta}^{\ka}([a,b]; \R^d)$ as given in Definition~\ref{def:weakly-ctrld} whose increments can be decomposed as
\beq\label{b1}
\der \mu^i = \mu^{\zeta;i i_1} \der \zeta^{i_1} + \rho^{\mu;i}, \quad\text{ where } \mu^{\zeta} \in \cac_1^{\ka}([a,b]; \R^{d,d}), \, \rho^{\mu} \in \cac_2^{2\ka}([a,b]; \R^d).
\eeq
Define $z$ by $z_a = y \in \R$ and 
\beq\label{eq:b}
\der z = \mu^i\der \zeta^i + \mu^{\zeta;i i_1} \mathbf{\zeta}^{2;i_1 i} + \ci(\eta) + \Lambda(\rho^{\mu;i} \der \zeta^i + \der \mu^{\zeta;i i_1} \mathbf{\zeta}^{2; i_1 i}),
\eeq
where the term $\ci(\eta)$ is introduced in Notation~\ref{not:leb-int} and $\laa$ is the sewing map in Proposition~\ref{prop:La}. Then:
\begin{enumerate}[wide, labelwidth=!, labelindent=0pt, label=\textnormal{(\arabic*)}]
\setlength\itemsep{.05in}

\item 
 The path $z$ is well-defined as an element of $\cq_{\zeta}^{\ka}([a,b]; \R)$, and $\der z_{st}$ coincides with the Lebesgue-Stieltjes integral $\int_s^t \mu_u^i d\zeta_u^i + \int_s^t \eta_u du$  whenever $\zeta$ is a differentiable function.

\item The semi-norm of $z$ in $\cq_{\zeta}^{\ka} ([a,b]; \R)$ can be estimated as 
\begin{multline}\label{eq:6-b}
\cn[z; \cq_{\zeta}^{\ka}([a,b]; \R)] \\
\leq c_{\zeta} \lcl 1 + \|\mu_a\| + (b-a)^{\al-\ka} \lp \|\mu_a\| + \cn[\mu; \cq_{\zeta}^{\ka}([a,b]; \R^d)] + \cn[\eta; \cac_b^0([a,b]; \R)] \rp \rcl,
\end{multline}
where the constant $c_{\zeta}$ verifies $c_{\zeta} \leq c(|\zeta|_{\al} + |\mathbf{\zeta}^2|_{2\al})$ for a universal constant $c$.

\item It holds that 
\beq\label{eq:riemann}
\der z_{st} = \lim_{|\Pi_{st}| \to 0} \sum_{q=0}^n \lc \mu_{t_q}^i \der \zeta_{t_q t_{q+1}}^i + \eta_{t_q} (t_{q+1}-t_q) + \mu_{t_q}^{\zeta;i i_1} \mathbf{\zeta}_{t_q, t_{q+1}}^{2; i_1 i} \rc,
\eeq
for any $a \leq s \leq t \leq b$. In \eqref{eq:riemann} the limit is taken over all partitions $\Pi_{st} = \{s=t_0, \ldots, t_n=t\}$ of $[s,t]$, as the mesh of the partition goes to zero. 
\end{enumerate}
\end{proposition}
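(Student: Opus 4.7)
The plan is to apply the sewing lemma (Proposition~\ref{prop:La}) to the natural local approximation of $\int \mu\, d\zeta$. Set
\[
J_{st} := \mu_s^i \, \der\zeta_{st}^i + \mu_s^{\zeta;i i_1} \, \mathbf{\zeta}_{st}^{2; i_1 i} \, ,
\]
viewed as an element of $\cac_2$, so that~\eqref{eq:b} can be rewritten as $\der z = J + \ci(\eta) + \Lambda(h)$ with $h := \rho^{\mu;i} \der\zeta^i + \der\mu^{\zeta;i i_1} \mathbf{\zeta}^{2; i_1 i}$. The crux of the argument is the algebraic identity $\der J = -h$, which I would derive by applying the Leibniz-type rule of Proposition~\ref{prop:der_rules} to each summand of $J$, using $\der\der\zeta = 0$, the Chen-type relation $\der\mathbf{\zeta}^{2;i_1 i}_{sut} = \der\zeta^{i_1}_{su}\der\zeta^i_{ut}$ from~\eqref{eq:zeta^2}, and the weakly controlled decomposition~\eqref{b1} of $\der\mu$. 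The key cancellation is that the ``bad'' cross term $\mu_s^{\zeta;i i_1} \der\zeta_{su}^{i_1} \der\zeta_{ut}^i$ generated by $\der(\mu^i \der\zeta^i)$ is absorbed exactly by the $\der$ of the L\'evy-area contribution $\mu^{\zeta;i i_1} \mathbf{\zeta}^{2;i_1 i}$.

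Once $\der J = -h$ is established, $\der h = -\der\der J = 0$ is automatic, and the H\"older regularities of the factors ($\rho^\mu \in \cac_2^{2\ka}$, $\der\zeta \in \cac_2^\al$, $\der\mu^\zeta \in \cac_2^\ka$, $\mathbf{\zeta}^2 \in \cac_2^{2\al}$) give $h \in \cac_3^{\tau}$ with exponent $\tau := (2\ka+\al)\wedge(\ka+2\al) > 1$ (using $\ka \le \al$ and the inequality $2\ka + \al > 1$ inherited from Definition~\ref{def:weakly-ctrld}). Proposition~\ref{prop:La} then produces $\Lambda(h) \in \cac_2^{\tau}$ with $\der\Lambda(h) = h$, and the right-hand side of~\eqref{eq:b} is thus a well-defined element of $\cac_2$ whose $\der$ vanishes (using also $\der\ci(\eta) = 0$, since $\ci(\eta)$ is already the increment of $t \mapsto \int_a^t \eta$). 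It therefore coincides with $\der z$ for a unique $z \in \cac_1$ with $z_a = y$. Reading off the decomposition from~\eqref{eq:b} gives $z^\zeta = \mu$ and identifies the remainder $\mu^\zeta \mathbf{\zeta}^2 + \ci(\eta) + \Lambda(h)$ as an element of $\cac_2^{2\ka}$, which settles part~(1).

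For the norm estimate~(2), the sewing bound $\|\Lambda(h)\|_\tau \le (2^\tau-2)^{-1} \|h\|_\tau$ controls the remainder, while the crude bootstraps $\|\mu\|_\infty \le \|\mu_a\| + (b-a)^\ka \cn[\mu;\cq_\zeta^\ka([a,b];\R^d)]$, the analogous estimate for $\mu^\zeta$, and $\|\ci(\eta)\|_1 \le \|\eta\|_\infty$ combine to produce~\eqref{eq:6-b}; the prefactor $(b-a)^{\al-\ka}$ is exactly the gain in H\"older exponent of $h$ over $\ka$. For the Riemann sum representation~(3), I would invoke the Young-type characterization that accompanies the sewing lemma: the partition sums $\sum_q J_{t_q t_{q+1}}$ converge to $J_{st} + \Lambda(h)_{st}$ as $|\Pi_{st}| \to 0$, while $\sum_q \eta_{t_q}(t_{q+1}-t_q) \to \ci_{st}(\eta)$ by elementary Riemann sum convergence, and summing these two limits reproduces the right-hand side of~\eqref{eq:b}. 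Finally, agreement with the Lebesgue--Stieltjes integral in the $C^1$ case follows because one may then take the canonical lift $\mathbf{\zeta}^2_{st} = \int_s^t(\zeta-\zeta_s) \otimes d\zeta = O(|t-s|^2)$, so its partition sum vanishes in the limit. The only nontrivial step is the identity $\der J = -h$: it is pure bookkeeping, but must be carried out carefully so that the Chen relation for $\mathbf{\zeta}^2$ cancels the cross term exactly where the direct decomposition of $\der(\mu\,\der\zeta)$ would otherwise fail to meet the sewing lemma's regularity requirement.
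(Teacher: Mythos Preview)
Your proof is correct and follows the standard Gubinelli approach: compute $\der J = -h$ via Proposition~\ref{prop:der_rules} and Chen's relation, verify $h \in \cac_3^{2\ka+\al}$ with $2\ka+\al>1$ so that Proposition~\ref{prop:La} applies, and read off the controlled decomposition, norm bound and Riemann-sum limit from the resulting sewing representation. The paper itself does not supply a proof of this proposition but simply refers to~\cite{CHT}; your sketch is precisely the argument one would find there (or in~\cite{gubinelli}), with the Lebesgue drift $\ci(\eta)$ carried along harmlessly since $\der\ci(\eta)=0$.
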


\subsection{Strongly controlled processes}\label{sec:strongProcess}

In the sequel, some of our computations will require a finer description than~\eqref{eq:weakly-controlled} for controlled processes. In this section we introduce this kind of decomposition and we will also derive an application to rough integration.
Let us start with the definition of strongly controlled process. 
\begin{definition}\label{def:str-contr}
Similarly to Definition~\ref{def:weakly-ctrld}, consider a process $\nu \in \cac_1^{\ka}(\R^n)$ with $\ka \leq \al$ and $2\ka+\al > 1$. Let us suppose that the increment $\der \nu$ lies in $\cac_2^{\ka}$ and can be decomposed as 
\beq\label{eq:str-contr-a}
\der \nu^j = \nu^{\zeta;j i_1} \der \zeta^{i_1} + \nu^{\bzeta^{2}; j i_1 i_2} \bzeta^{2; i_2 i_1} + \rho^{\nu;j},
\eeq
where the regularity for the paths $\nu^{\zeta}$, $\nu^{\bzeta^2}$ and $\rho^{\nu}$ are respectively
\beq\label{eq:str-contr-b}
\nu^{\zeta}, \nu^{\bzeta^2} \in \cac_1^{\ka}([a,b]), \quad\text{and}\quad \rho^{\nu} \in \cac_2^{3\ka}([a,b]).
\eeq
In addition, we assume that $\nu^{\zeta}$ is also a weakly controlled process as in Definition~\ref{def:weakly-ctrld}, with decomposition
\beq\label{eq:str-contr-l}
\der \nu^{\zeta; ji_1} = \nu^{\zeta^{2}; j i_1 i_2} \der \zeta^{i_2} + \rho^{\nu^{\zeta};j},
\eeq
where $\nu^{\zeta^{2}; j i_1 i_2}$ is like in \eqref{eq:str-contr-a} and $\rho^{\nu^{\zeta}; j}$ is a remainder in $\cac_2^{2\ka}([a,b])$. Then we say that $\nu$ is a strongly controlled process in $[a,b]$. We call $\tilde{\cq}_{\zeta}^{\ka}(\R^n)$ this set of paths, equipped with the semi-norm
$$
\cn[\nu; \tilde{\cq}_{\zeta}^{\ka}(\R^n)] = \cn[\nu;\cac_1^{\ka}] + \cn[\nu^{\zeta};\cac_1^{\infty}] + \cn[\nu^{\zeta^{2}}; \cq_{\zeta}^{\ka}] + \cn[\rho;\cac_2^{3\ka}].
$$
\end{definition}

On our way to the definition of viscosity solutions, we will encounter integrals of controlled processes with respect to other controlled processes. We recall how to define this kind of integral in the proposition below, which is a slight elaboration of \cite[Theorem~1]{gubinelli} and requires the introduction of strongly controlled processes. 

\begin{proposition}\label{prop:integral_controlled}
For a given $\al > \frac{1}{3}$ and $\ka < \al$, let $\zeta$ be a process satisfying Hypothesis~\ref{hyp:zeta}. 
Given $m,n \geq 1$, we consider a weakly controlled process $\mu \in \cq_{\bzeta}^{\ka}([a,b]; \R^m)$ as introduced in Definition~\ref{def:weakly-ctrld} and a strongly controlled process $\nu \in \tilde{\cq}_{\zeta}^{\ka}([a,b]; \R^n)$ as given in Definition~\ref{def:str-contr}. Specifically, for every $i=1, \ldots , m$ and $j=1, \ldots, n$ we assume the decomposition
\begin{align}
\der \mu^i &= \mu^{\zeta;i i_1} \der \zeta^{i_1} + \rho^{\mu; i}, \label{eq:int_contr-g} \\ %\text{ where } \mu^{\zeta} \in \cac_1^{\ka}([a,b]; \R^{m,d}), \, \rho^{\mu} \in \cac_2^{2\ka}([a,b]; \R^m),\\
\der \nu^j &= \nu^{\zeta;j i_1} \der \zeta^{i_1} + \nu^{\bzeta^2; j i_1 i_2}\bzeta^{2; i_2 i_1} +  \rho^{\nu;j}, \label{eq:int_contr-h}%\text{ where } \nu^{\zeta} \in \cac_1^{\ka}([a,b]; \R^{n,d}), \, \rho^{\nu} \in \cac_2^{2\ka}([a,b]; \R^n).
\end{align}
where the regularities for the paths $\mu^{\bzeta}, \mu^{\bzeta^2}, \rho^{\mu}, \nu^{\bzeta}, \nu^{\bzeta^2}, \rho^{\nu}$ are respectively
\begin{equation*}
\mu^{\bzeta; i i_1}, \nu^{\bzeta; j i_1}, \nu^{\bzeta^2; j i_1 i_2} \in \cac_1^{\ka}([a,b]), \qquad
\rho^{\mu; i} \in \cac_2^{2 \ka} ([a,b]), \qquad
\rho^{\nu; j} \in \cac_2^{3 \ka} ([a,b]).
\end{equation*}
Furthermore, let $\eta \in \cac_1^0([a,b]; \R^{m, n})$. For $1 \leq i \leq m$ and $1\leq j \leq n$ we define a controlled process $z^{ij}$ by setting $z_a^{ij} = y^{ij} \in \R$ and a decomposition
\beq\label{eq:int_contr-e}
\der z_{st}^{ij} = \cg_{st}^{ij} + \ci_{st}(\eta^{ij}) + \laa_{st}\lp \cj^{ij} \rp,
\eeq
where the term $\ci(\eta^{ij})$ is introduced in Notation~\ref{not:leb-int}, where the increment $\cg^{ij}$ is defined by 
\beq\label{eq:int_contr-i}
\cg_{st}^{ij} = \mu_s^i \nu_s^{\bzeta;j i_1} \der \bzeta_{st}^{i_1} + \lp \mu_s^i \nu_s^{\bzeta^2; j i_1 i_2} + \mu_s^{\bzeta; i i_2} \nu_s^{\bzeta; j i_1} \rp \bzeta_{st}^{2; i_2 i_1},
\eeq
and where $\cj^{i j}$ is an increment in $\cac_3^{3\ka}$ satisfying
\beq\label{eq:int_contr-f}
\cj_{sut}^{i j} = - \der \cg_{sut}^{ij}, 
\quad \text{and} \quad
\cn[\cj^{ij}; \cac_3^{3\ka}([a,b])] \leq c_{\bzeta} \cn[\mu^i; \cq_{\zeta}^{\ka}([a,b])] \cn[\nu^j; \tilde{\cq}_{\zeta}^{\ka}([a,b])].
\eeq
%where we recall the notation $\der$ in \eqref{eq:der}.
Then similarly to Proposition~\ref{prop:integral_as_weak}, the following assertions hold true:
\begin{enumerate}[wide, labelwidth=!, labelindent=0pt, label=\textnormal{(\arabic*)}]
\setlength\itemsep{.02in}

\item 
The path $z$ is well-defined as an element of $\cq_{\zeta}^{\ka}([a,b]; \R^{m,n})$, and $\der z_{st}$ coincides with the Lebesgue-Stieltjes integral $\int_s^t \mu_u \otimes d\nu_u + \int_s^t \eta_u du$  whenever $\zeta$ is a differentiable function.

%\item The semi-norm of $z$ in $\cq_{\zeta}^{\ka} ([a,b]; \R)$ can be estimated as 
%\begin{multline}\label{eq:6-b}
%\cn[z; \cq_{\zeta}^{\ka}([a,b]; \R)] \\
%\leq c_{\zeta} \lcl 1 + \|\mu_a\| + (b-a)^{\al-\ka} \lp \|\mu_a\| + \cn[\mu; \cq_{\zeta}^{\ka}([a,b]; \R^d)] + \cn[\eta; \cac_1^0([a,b]; \R)] \rp \rcl,
%\end{multline}
%where the constant $c_{\zeta}$ verifies $c_{\zeta} \leq c(|\zeta|_{\al} + |\mathbf{\zeta}^2|_{2\al})$ for a universal constant $c$.

\item It holds that 
\begin{equation}\label{eq:riemann-2}
\der z_{st}^{ij} = \lim_{|\Pi_{st}| \to 0} \sum_{q=0}^n \lc \mu_{t_q}^i  \nu_{t_q}^{\bzeta;j i_1} \der \bzeta_{t_q t_{q+1}}^{i_1} 
+ \lp \mu_{t_q}^i \nu_{t_q}^{\bzeta^2;j i_1 i_2} 
+ \mu_{t_q}^{\bzeta; i i_1} \nu_{t_q}^{\bzeta; j i_2} \rp \bzeta_{t_q t_{q+1}}^{2; i_2 i_1} +\eta_{t_q}^{ij} (t_{q+1} - t_q) \rc \, ,
\end{equation}
for any $a \leq s \leq t \leq b$. In \eqref{eq:riemann-2} the limit is taken over all partitions $\Pi_{st} = \{s=t_0, \ldots, t_n=t\}$ of $[s,t]$, as the mesh of the partition goes to zero. 
\end{enumerate}
\end{proposition}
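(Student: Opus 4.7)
The plan is to follow the same scheme as Proposition~\ref{prop:integral_as_weak}: construct $z$ via its ``compensated Riemann sum'' increment $\cg$ plus a sewing correction $\Lambda(\cj)$, and then read off the weakly controlled structure from the explicit expression. The essential new ingredient compared to Proposition~\ref{prop:integral_as_weak} is that $\nu$ is \emph{strongly} controlled, which is precisely what provides the second-order coefficient $\nu^{\bzeta^{2}}$ entering \eqref{eq:int_contr-i} --- this term compensates the $\bzeta^{2}$-order contribution generated by pairing $\mu$ against the rough increment of $\nu$.

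\textbf{Step 1: $3\ka$-regularity of $\cj$ via a key cancellation.} First I would verify that $\cj = -\der \cg$ lies in $\cac_3^{3\ka}$ with the bound asserted in \eqref{eq:int_contr-f}. Applying Proposition~\ref{prop:der_rules} to the explicit formula \eqref{eq:int_contr-i}, one expands $\der \cg_{sut}$ by inserting the decompositions \eqref{eq:int_contr-g}, \eqref{eq:int_contr-h} and \eqref{eq:str-contr-l} of $\mu$, $\nu$ and $\nu^{\bzeta}$, together with Hypothesis~\ref{hyp:zeta} in the form $\der \bzeta^{2}_{sut} = \der \zeta_{su} \otimes \der \zeta_{ut}$. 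The terms formally of Hölder order $2\ka$ --- those carrying a single factor $\der \zeta_{su}\otimes \der \zeta_{ut}$ or $\bzeta^{2}_{su}$ --- cancel exactly; this is the algebraic identity underpinning rough integration of strongly controlled integrators, and it is precisely where the second-level coefficient $\nu^{\bzeta^{2}}$ is needed. What remains are products of three $\ka$-Hölder increments, yielding the $3\ka$-bound. The identity $\der\der = 0$ gives $\der \cj = 0$ automatically.

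\textbf{Step 2: Sewing and weakly controlled structure.} Since $\cj \in \cac_3^{3\ka}$ with $3\ka > 1$ and $\der \cj = 0$, Proposition~\ref{prop:La} delivers $\Lambda(\cj) \in \cac_2^{3\ka}$ with the standard constant. Then $z$ defined by \eqref{eq:int_contr-e} clearly lies in $\cac_1^{\ka}([a,b];\R^{m,n})$, and isolating the leading $\der \zeta$-term from $\cg$ yields the decomposition
\[
\der z_{st}^{ij} = \mu_s^{i}\, \nu_s^{\bzeta;j i_1}\, \der \zeta_{st}^{i_1} + \rho^{z;ij}_{st},
\]
in which the remainder $\rho^{z;ij}$ --- grouping the $\bzeta^{2}$-piece of $\cg$, the Lebesgue term $\ci(\eta^{ij})$, and $\Lambda(\cj^{ij})$ --- lies in $\cac_2^{2\ka}([a,b])$. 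Hence $z \in \cq_{\bzeta}^{\ka}([a,b];\R^{m,n})$ with Gubinelli derivative $z^{\bzeta;ij i_1} = \mu^{i}\nu^{\bzeta;j i_1}$, and the corresponding seminorm bound follows by combining the sewing estimate of Proposition~\ref{prop:La} with the product estimates $\cn[\mu^{i};\cq_{\zeta}^{\ka}]\,\cn[\nu^{j};\tilde{\cq}_{\zeta}^{\ka}]$.

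\textbf{Step 3: Riemann sums and smooth identification.} For any partition $\Pi_{st} = \{s=t_0,\ldots,t_n=t\}$, a telescoping argument applied to \eqref{eq:int_contr-e} yields
\[
\der z_{st}^{ij} = \sum_{q=0}^{n} \bigl[\cg_{t_q t_{q+1}}^{ij} + \ci_{t_q t_{q+1}}(\eta^{ij}) + \Lambda(\cj^{ij})_{t_q t_{q+1}}\bigr].
\]
The $\Lambda(\cj)$-contribution is bounded by $\|\Lambda(\cj^{ij})\|_{3\ka}\sum_q(t_{q+1}-t_q)^{3\ka}$, which vanishes as $|\Pi_{st}|\to 0$ because $3\ka > 1$. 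Inserting the explicit form \eqref{eq:int_contr-i} of $\cg$ into the remaining sum produces exactly the right-hand side of \eqref{eq:riemann-2}. When $\zeta$ is smooth, Taylor expansions of $\mu$ and $\nu$ show that this Riemann sum also converges to the classical Lebesgue--Stieltjes integral $\int_s^t \mu_u \otimes d\nu_u + \int_s^t \eta_u\,du$, delivering the identification in (1). The main obstacle is the algebraic cancellation in Step~1: although conceptually standard in rough path theory, it requires careful bookkeeping across several indices, and it is the whole reason the strongly controlled class $\tilde{\cq}_\zeta^\ka$ had to be introduced in the first place.
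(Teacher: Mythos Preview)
The paper does not actually supply a proof of this proposition; it is stated as a ``slight elaboration of \cite[Theorem~1]{gubinelli}'' and then used without further argument. Your proposal reconstructs precisely the standard sewing-lemma proof that this citation points to: verify the $3\ka$ cancellation in $\der\cg$, apply Proposition~\ref{prop:La}, and conclude with the telescoping Riemann-sum argument. This is the intended route and is correctly outlined, so there is nothing to compare against on the paper's side.
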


\begin{remark} \label{rmk: time-decomp}
    Consider a path $z$ with decomposition (\ref{eq:int_contr-e})-(\ref{eq:int_contr-i}). The decomposition of its time increments can be written as
    \begin{equation} \label{eq: time-decomp}
        \delta z_{st}=z_{s}^{\zeta}\delta \zeta_{st}+z_{s}^{\zeta^{2}}\zeta^{2}_{st}+z_{s}^{\tau}\delta \tau _{st}+R_{st},
    \end{equation}
    where we have set $\delta \tau _{st}=t-s$, with
    \[z_{s}^{\zeta}=\mu_{s}\otimes \nu_{s}^{\zeta}, 
    \qquad z_{s}^{\zeta^{2}}=\mu_{s}\otimes\nu_{s}^{\zeta^{2}}+\mu_{s}^{\zeta}\otimes \nu_{s}^{\zeta},
    \qquad z^{\tau}=\eta,\]
    and where the remainder $R$ sits in $C^{3\kappa}(\R^{m})$. This type of decomposition is even more detailed than (\ref{eq:str-contr-a}). It will be used in later computations, when a proper drift identification is necessary.
\end{remark}
\begin{definition}\label{def:time_deriv}
    In the context of Remark \ref{rmk: time-decomp}, whenever a path $z$ admits the decomposition~(\ref{eq: time-decomp}), we will write
    \begin{equation}
      \label{eq:time-not}  z^{\tau}\equiv \frac{dz}{d\tau}=\dot z.
    \end{equation}
    Otherwise stated, for a controlled process $z^{\tau}\in \mathcal{Q}^{\ka}(\R^{m})$, one can define a rough time derivative $z^{\tau}=\dot{z}$ if there exists two paths $z^{\zeta}$, $z^{\zeta^{2}}$ such that the increment $\delta z_{st}-z_{s}^{\zeta}\delta \zeta_{st}-z_{s}^{\zeta^{2}}\zeta^{2}_{st}-z_{s}^{\tau} \delta \tau_{st}$ belongs to $\mathcal{C}^{3\kappa}(\R^{m})$.
\end{definition}
We close this section by spelling out a technical result which will be used for our HJB equations. We shall thus consider controlled processes which are indexed by an additional spatial parameter $\theta$. We give a composition rule for such processes. 

\begin{proposition}\label{prop:str_comp}
For a given $\al > \frac{1}{3}$ and $\ka < \al$, let $\bzeta$ be a process satisfying Hypothesis~\ref{hyp:zeta}. Consider two $\cac^3(\R^m;\R^m)-$valued strongly controlled processes $\mu$ and $\nu$ admitting a decomposition of type~\eqref{eq:str-contr-a}:
\begin{align}\label{eq:mu_nu_str}
\der \mu^j(\theta) &= \mu^{\bzeta; j i_1}(\theta) \der \bzeta^{i_1} + \mu^{\bzeta^2; ji_1 i_2}(\theta) \bzeta^{2;i_2 i_1} + \rho^{\mu;j}(\theta) \nonumber\\
\der \nu^j(\theta) &= \nu^{\bzeta;ji_1}(\theta) \der \bzeta^{i_1} + \nu^{\bzeta^2; ji_1 i_2}(\theta) \bzeta^{2; i_2 i_1} + \rho^{\nu; j}(\theta),
\end{align}
for $\theta \in \R^m$. Then $\mu \circ \nu$ is another strongly controlled process with decomposition 
\begin{align}
{\lc \mu \circ \nu \rc}^{\bzeta; j i_1}(\theta) &= \mu^{\bzeta; j i_1}(\nu(\theta)) + \pt_{\eta^k} \mu^j(\nu(\theta)) \nu^{\bzeta; ki_1}(\theta) \label{eq:mu_nu_str_decomp-i}\\
{\lc \mu \circ \nu \rc}^{\bzeta^2; j i_1 i_2} (\theta) &= \mu^{\bzeta^2; j i_1 i_2}(\nu(\theta)) + \pt_{\eta^k} \mu^j(\nu(\theta)) \nu^{\bzeta^2; k i_1 i_2}(\theta) + \pt_{\eta^k} \mu^{\bzeta; j i_2}(\nu(\theta)) \nu^{\bzeta; k i_1}(\theta)\nonumber \\ 
&+ \pt_{\eta^k} \mu^{\bzeta; j i_1}(\nu(\theta)) \nu^{\bzeta; k i_2}(\theta) 
+\pt_{\eta^{k_1} \eta^{k_2}}^2 \mu^j (\nu(\theta)) \nu^{\bzeta; k_1 i_1}(\theta) \nu^{\bzeta; k_2 i_2}(\theta) \label{eq:mu_nu_str_decomp-ii}.
\end{align}
\end{proposition}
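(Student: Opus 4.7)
The natural starting point is to split
\begin{equation*}
\delta(\mu\circ\nu)^j_{st}(\theta)=\bigl[\mu^j_t(\nu_t(\theta))-\mu^j_s(\nu_t(\theta))\bigr]+\bigl[\mu^j_s(\nu_t(\theta))-\mu^j_s(\nu_s(\theta))\bigr],
\end{equation*}
isolating a time-increment of $\mu$ evaluated at the fixed spatial point $\eta=\nu_t(\theta)$ from a spatial increment of $\mu_s$. The first bracket is handled by the strong decomposition~\eqref{eq:mu_nu_str} of $\mu$ at $\eta=\nu_t(\theta)$, and the second by a second-order Taylor expansion of $\mu_s:\R^m\to\R^m$ around $\eta=\nu_s(\theta)$. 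This adapts the classical composition argument for controlled paths (see~\cite{gubinelli}) to the strongly controlled setting.

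Next, I Taylor-expand each spatial coefficient carrying the argument $\nu_t(\theta)$, namely $\mu^{\zeta}_s(\nu_t(\theta))$, $\mu^{\zeta^2}_s(\nu_t(\theta))$ and $\rho^{\mu}_{st}(\nu_t(\theta))$, around $\nu_s(\theta)$ to the appropriate order, and substitute the strong decomposition~\eqref{eq:str-contr-a} of $\delta\nu_{st}(\theta)$ wherever it appears. Every product $\delta\zeta^{i}_{st}\delta\zeta^{j}_{st}$ so produced is rewritten as $\bzeta^{2;ij}_{st}+\bzeta^{2;ji}_{st}$ via the geometric identity of Hypothesis~\ref{hyp:zeta}, after which symmetry of $\partial^2_{\eta^{k_1}\eta^{k_2}}\mu^j_s$ together with relabelling of dummy indices is used to consolidate the resulting contributions.

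Matching by rough type then produces the two coefficient formulas. The $\delta\zeta^{i_1}_{st}$-coefficient receives $\mu^{\zeta;ji_1}(\nu(\theta))$ from the first bracket and $\partial_{\eta^k}\mu^{j}(\nu(\theta))\nu^{\zeta;ki_1}(\theta)$ from the first-order Taylor term of the second bracket, yielding~\eqref{eq:mu_nu_str_decomp-i}. The $\bzeta^{2;i_2i_1}_{st}$-coefficient collects exactly the five contributions of~\eqref{eq:mu_nu_str_decomp-ii}: $\mu^{\zeta^2;ji_1i_2}(\nu(\theta))$ from the direct $\zeta^2$-piece of $\mu$; the two cross terms $\partial_{\eta^k}\mu^{\zeta;ji_2}(\nu(\theta))\nu^{\zeta;ki_1}(\theta)$ and $\partial_{\eta^k}\mu^{\zeta;ji_1}(\nu(\theta))\nu^{\zeta;ki_2}(\theta)$ arising from the spatial Taylor expansion of $\mu^{\zeta;ji_1}_s$ paired with the $\zeta$-part of $\delta\nu$, after symmetrizing $\delta\zeta\otimes\delta\zeta$; the term $\partial_{\eta^k}\mu^j(\nu(\theta))\nu^{\zeta^2;ki_1i_2}(\theta)$ from pairing the first-order Taylor of $\mu_s$ with the $\zeta^2$-part of $\delta\nu$; and the quadratic $\partial^2_{\eta^{k_1}\eta^{k_2}}\mu^j(\nu(\theta))\nu^{\zeta;k_1i_1}(\theta)\nu^{\zeta;k_2i_2}(\theta)$ from the second-order Taylor of $\mu_s$.

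The main obstacle is remainder bookkeeping: one must verify that all leftover pieces lie in $\cac_2^{3\ka}$. These fall into a handful of classes, each treated by a direct H\"older estimate: $\rho^{\mu}_{st}(\nu_s(\theta))\in\cac_2^{3\ka}$ by assumption; spatial corrections such as $\partial_{\eta^k}\rho^{\mu}_{st}(\nu_s(\theta))\delta\nu^{k}_{st}$, of size $|t-s|^{3\ka+\ka}$; the cubic Taylor tail of $\mu_s$ against $(\delta\nu_{st})^{3}$, of size $|t-s|^{3\ka}$; and cross terms of size $|t-s|^{2\ka+\al}\geq|t-s|^{3\ka}$ arising by pairing second-order spatial corrections with $\delta\zeta$ or $\bzeta^{2}$, where the inequality $\al\geq\ka$ is used. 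Along the way, promoting $\mu\circ\nu$ to a genuinely \emph{strongly} controlled process requires showing that the $\zeta$-coefficient~\eqref{eq:mu_nu_str_decomp-i} is itself weakly controlled in time, which follows from combining the weakly controlled structure~\eqref{eq:str-contr-l} of $\mu^{\zeta}$ with the standard composition rule for weakly controlled paths, using $\cac^3$-regularity of $\mu_t$ and $\nu_t$ in $\theta$ throughout to bound all multilinear factors.
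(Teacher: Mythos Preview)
The paper states Proposition~\ref{prop:str_comp} as a technical result without proof, so there is no proof in the paper to compare against. Your plan is correct and is exactly the natural argument: split $\delta(\mu\circ\nu)$ into a time-increment of $\mu$ at a frozen spatial point plus a spatial increment of $\mu_s$, Taylor-expand to second order, insert the strong decompositions of $\mu$ and $\nu$, and use the geometric relation $\delta\zeta^{i}\delta\zeta^{j}=\bzeta^{2;ij}+\bzeta^{2;ji}$ to convert all $\delta\zeta\otimes\delta\zeta$ products into second-level terms. Your identification of the five $\bzeta^{2}$-contributions and the remainder bookkeeping are accurate, and you correctly flag the side obligation of checking that the first-level coefficient~\eqref{eq:mu_nu_str_decomp-i} is itself weakly controlled so that $\mu\circ\nu$ is genuinely strongly controlled.

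One minor simplification: you do not actually need to Taylor-expand $\rho^{\mu}_{st}(\nu_t(\theta))$ around $\nu_s(\theta)$; since $\rho^{\mu}\in\cac_2^{3\ka}$ uniformly in the spatial argument (this is implicit in the $\cac^3$-valued hypothesis), the term $\rho^{\mu}_{st}(\nu_t(\theta))$ already sits in $\cac_2^{3\ka}$ as is and can be thrown directly into the remainder.
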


\subsection{Rough differential equations with relaxed controls}
This section is devoted to recall some of the results concerning controlled equations derived in \cite{CHT}. This will lay the ground for our developments throughout the article. Let us then recall that one starts from a rough path $\bzeta$ as in Hypothesis~\ref{hyp:zeta}. 
%One also considers a compact subset $\cv \subset\subset \R^d$ and a measure-valued (relaxed) control $\ga$ such that
As far as the relaxed control is concerned, our state space will be the following:
\begin{definition}\label{def:rel-control}
Consider a domain $U \subset \R^d$. Then the state space for all the relaxed controls below will be a compact subset $\ck$ of $\cp(U)$, where $\cp(U)$ denotes the set of probability measures on $U$. Notice that one can take $\ck = \cp(U)$ whenever $U$ is a compact domain of $\R^d$.
\end{definition}
We now state our basic assumptions on relaxed controls, which will prevail throughout the article.
\begin{hypothesis}\label{hyp:rel-control-1}
In the sequel we consider some measure-valued paths $\{ \ga_s; 0 \leq s \leq T \}$ such that
\begin{enumerate}[wide, labelwidth=!, labelindent=0pt, label=\textnormal{(\arabic*)}]
\setlength\itemsep{.05in}
\item For every $s$, we have $\gamma_{s}\in \mathcal{K}$, where $\mathcal{K}$ is introduced in Definition \ref{def:rel-control}.
\item The application $s\mapsto \gamma_{s}$ is measurable from $[0,T]$ to $\mathcal{K}$.
\end{enumerate}
\end{hypothesis}

\noindent
Next for an initial condition $y \in \R^m$, an initial time $s \in [0,T]$, a relaxed control $\ga$ and our rough path $\bzeta$, let $x^{s,y,\ga}$ be the rough differential equation defined by
\beq\label{eq:relaxed-rde}
x_t^{s,y,\ga} = y+\int_s^t \int_U b(r, x_r^{s,y,\ga},a)\ga_r(da)dr + \int_s^t \si(r, x_r^{s,y,\ga}) \, d \bzeta_r .
\eeq
In \eqref{eq:relaxed-rde}, the assumptions on the coefficients $b$ and $\si$ are rather standard within the rough path theory. They are summarized as follows:
\begin{hypothesis}\label{hyp:coeff}
	We assume that the coefficients $b$ and $\sigma$ in \eqref{eq:relaxed-rde} satisfy $b \in \mathcal C_b^{0,1,1}([0,T] \times \R^m \times U; \R^m)$ and $\sigma \in \mathcal C_b^{1,3}([0,T]\times \R^m; \R^{m,d})$. \emph{Furthermore $b(u, x, a)$ is uniformly continuous in $u$, and it is also assumed to be Lipschitz in space uniformly in time and control.}\end{hypothesis}
We now turn to a slight elaboration of Proposition 3.3 from \cite{CHT}, giving existence and uniqueness for RDEs with relaxed controls.
\begin{proposition}\label{ExistUniq2}
Consider $\al > \frac{1}{3}$ and a rough path $\bzeta$ fulfilling Hypothesis~\ref{hyp:zeta}. The coefficients $b$ and $\si$ are such that Hypothesis~\ref{hyp:coeff} is satisfied. In addition, $\ga \colon [0,T]\to \mathcal{P}(U)$ is a measure-valued process fulfilling Hypothesis \ref{hyp:rel-control-1}. Then equation~\eqref{eq:relaxed-rde} admits a unique solution in the space $\cq_{\bzeta}^{\ka}$ introduced in Definition~\ref{def:str-contr}, for all $\frac{1}{3} < \ka < \ga$.
\end{proposition}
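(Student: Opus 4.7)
The plan is to set up a standard fixed-point argument in the space $\cq_{\bzeta}^{\ka}$ of weakly controlled paths introduced in Definition~\ref{def:weakly-ctrld}, combining the composition rule of Proposition~\ref{prop:smooth_of_weak} with the integration result of Proposition~\ref{prop:integral_as_weak}. The main novelty compared to \cite[Proposition~3.3]{CHT} is the need to accommodate a relaxed control $\ga$ which is only measurable in time.

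First, I would recast the drift in a form that is compatible with Notation~\ref{not:leb-int}. Set
\[
B(r,x,\ga_r) = \int_U b(r,x,a)\,\ga_r(da), \qquad 0 \le r \le T, \ x \in \R^m.
\]
Under Hypothesis~\ref{hyp:coeff}, the function $B$ inherits from $b$ uniform boundedness and Lipschitz continuity in $x$ uniformly in $(r,\ga)$, while the measurability of $r \mapsto \ga_r$ (Hypothesis~\ref{hyp:rel-control-1}) combined with the joint continuity of $b$ in $(r,x)$ ensures that $r \mapsto B(r,z_r,\ga_r)$ is bounded and measurable for any continuous $z$. A first technical step is then to observe that Proposition~\ref{prop:integral_as_weak} extends, with the same estimate~\eqref{eq:6-b}, to $\eta$ merely bounded measurable; indeed the proof there only uses the $\cac_b^0$-seminorm through the Riemann-sum convergence and the upper bound $\cn[\eta;\cac_b^0]$, both of which go through with $\|\eta\|_\infty$ replacing the sup norm of a continuous function.

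Next, on a small interval $[0,T_1]$ with $T_1 \le T$ to be chosen, I would introduce the map $\Phi \colon \cq_{\bzeta}^{\ka}([0,T_1];\R^m) \to \cq_{\bzeta}^{\ka}([0,T_1];\R^m)$ defined by $z \mapsto x$ where
\[
x_t = y + \int_0^t B(r,z_r,\ga_r)\,dr + \int_0^t \si(r,z_r)\,d\bzeta_r,
\]
with the rough integral interpreted in the sense of \eqref{eq:b} applied to the controlled path $\mu_r = \si(r,z_r)$. Applying Proposition~\ref{prop:smooth_of_weak} with $L = \si$ one obtains $\mu \in \cq_{\bzeta}^{\ka}$ together with the quadratic bound \eqref{eq:6-a}, and then Proposition~\ref{prop:integral_as_weak} (in its extended form) yields $x \in \cq_{\bzeta}^{\ka}$ together with the estimate
\[
\cn[x;\cq_{\bzeta}^{\ka}([0,T_1])]
\le c_{\bzeta}\bigl\{1 + |y| + T_1^{\al-\ka}\bigl(1+\cn^2[z;\cq_{\bzeta}^{\ka}]+\|B\|_\infty\bigr)\bigr\}.
\]
Taking $T_1$ small enough, this shows that $\Phi$ leaves a large enough ball in $\cq_{\bzeta}^{\ka}([0,T_1])$ invariant.

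For the contraction step, one compares $\Phi(z)-\Phi(\tilde z)$ for two elements of that ball. The drift contribution is controlled by the Lipschitz property of $B$ in $x$ (uniform in $\ga$), giving a factor $T_1\,\|z-\tilde z\|_\infty$. The rough contribution is handled by writing $\si(r,z_r)-\si(r,\tilde z_r)$ again as a controlled path via Proposition~\ref{prop:smooth_of_weak} and using the Lipschitz-type estimates for the integration map, which produce a bound of the form $c_{\bzeta}T_1^{\al-\ka}\,\cn[z-\tilde z;\cq_{\bzeta}^{\ka}]$. Choosing $T_1$ sufficiently small depending only on $\|\bzeta\|_\al$, $\|b\|_\infty$, the Lipschitz constants of $b$ and $\si$, and the $\cac_b^{1,3}$-norm of $\si$, one obtains a strict contraction, hence a unique local solution by Banach's fixed point theorem. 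Since $T_1$ depends only on data that are uniform in the starting point $y$, one patches finitely many such intervals to obtain the unique solution on $[0,T]$.

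The main obstacle I anticipate is the regularity gap in the drift: the relaxed control $\ga$ is merely measurable in time, whereas Proposition~\ref{prop:integral_as_weak} is stated for continuous $\eta$. Justifying the extension to bounded measurable integrands (and verifying that the Riemann-sum representation~\eqref{eq:riemann} still holds for the rough part while the drift is handled as a Lebesgue integral) is the one delicate step; everything else follows the same scheme as in \cite{CHT}.
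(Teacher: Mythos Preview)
Your proposal is correct and follows essentially the same route as the paper: the paper's proof simply observes that the argument of \cite[Proposition~3.3 and Corollary~3.11]{CHT} goes through unchanged once the H\"older assumption on $s\mapsto\ga_s$ is dropped, and defers all details to that reference. What you have written is precisely a sketch of that argument, with the correct identification of the only nontrivial point, namely that the drift integrand $\eta_r=B(r,z_r,\ga_r)$ need only be bounded and measurable for the Lebesgue part of Proposition~\ref{prop:integral_as_weak} to apply.
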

\begin{proof}
    Our proposition establishes existence and uniqueness under weaker assumptions on the control $\gamma$ and the drift term $b$ (compared to Proposition 3.3 and Corollary 3.11 from \cite{CHT}). More specifically, the H\"older assumptions on $s\mapsto \gamma_{s}$ assumed in \cite{CHT} is avoided here. We will need this variant for the perturbative approach developed in the upcoming section. The interested reader can check that the proof of Proposition 3.3 and Corollary 3.11 in \cite{CHT} are unchanged under those weaker hypotheses. We skip the details for the sake of brevity.\end{proof}

\subsection{Value function and rough HJB equations}

A significant part of reinforcement learning in continuous time can be reduced to an optimization problem over strategies. In the context of a rough dynamics like~\eqref{eq:relaxed-rde}, the goal is then to optimize over $\ga$ a reward function given by
\beq\label{eq:J}
J_{s,T} (\ga, y) = \int_s^T F(r, x_r^{\ga}, \ga_r) dr + G(x_T^{\ga}),
\eeq
where $F$ and $G$ are functions whose regularity can be specified as follows:
\begin{hypothesis}\label{hyp:reward}
	$(t,x,\gamma) \mapsto F(t,x,\gamma)$ is bounded, Lipschitz continuous in $x$, uniformly over $\gamma \in \mathcal K$ and Lipschitz in $\gamma \in \mathcal K$. The function $G:\mathbb R^m \to \mathbb R$ is bounded and continuous.
\end{hypothesis}

\begin{remark}\label{rmk: F}
As mentioned in \cite[Remark~4.3]{CHT}, under certain non-degeneracy conditions on $\ga$ a typical $F$ verifying Hypothesis~\ref{hyp:reward} is given by
\beq\label{eq:F-ex}
F(s,x,\ga) = \int_U R(s, x, u) \dot{\ga}_s(u) du - \la \int_U \dot{\ga}_s(u) \log \dot{\ga}_s(u) du.
\eeq
In \eqref{eq:F-ex} notice that $\dot{\ga}_s$ stands for the density of $\ga_s$, and the term $\int \dot{\ga} \log \dot{\ga}$ is an entropy term favoring the exploration of $U$. The quantity $\la > 0$ has to be understood as a regularization parameter. See more about this type of coefficient in the forthcoming Remark \ref{rmk: K}. 
\end{remark}

Before describing our notion of value function, let us label an assumption, borrowed from~\cite{CHT}, on the relaxed control $\ga$. This hypothesis is more restrictive than Hypothesis~\ref{hyp:rel-control-1}. It implies that $\ga$ lies in a compact domain, in order to make sure that the supremum in the definition of the value function is attained. We recall those assumptions here:
\begin{hypothesis}\label{hyp:rel-control}
In the sequel we consider measure-valued paths $\{ \ga_s; 0 \leq s \leq T \}$ such that
\begin{enumerate}[wide, labelwidth=!, labelindent=0pt, label=\textnormal{(\arabic*)}]
\setlength\itemsep{.05in}
\item For every $s$, we have $\ga_s \in \mathcal{K}$, where $\mathcal{K}$ is the set introduced in Definition \ref{def:rel-control}.
\item We have $\ga \in \cv^{\ep, L}$ for two fixed constants $\ep, L > 0$, where the set $\cv^{\ep, L}$ is defined by
\beq\label{eq:cv^ep,L}
\cv^{\ep, L} = \lcl \ga: [0,T] \mapsto \ck ; \,  W_2(\ga_s, \ga_t) \leq L|t-s|^{\ep} \rcl,
\eeq
and $W_2$ stands for the 2-Wasserstein distance for probability distributions in $\cp(U)$. Also recall that the $W_{2}$-distance between two measures $m_{1}, m_{2}$ is defined by
$$
W_2(m_1, m_2) = \inf \lcl {\lp\int \int d^2(x,y) \, dm(x,y) \rp}^{\frac{1}{2}}; m \in \gga(m_1, m_2) \rcl,
$$
with $\gga(m_1, m_2)$ denoting the set of couplings with marginals $m_1$, $m_2$.
\end{enumerate}
\end{hypothesis}
Towards the optimization of the action $\ga$ seen as a relaxed control, we set
\beq\label{eq:V_sy}
V(s,y) := \sup \lcl J_{s,T}(\ga, y): \ga \in \cv^{\ep, L}([s,T]; \ck) \rcl,
\eeq
where we recall that $\cv^{\ep, L}$ is the set introduced in \eqref{eq:cv^ep,L}. Our main result in \cite{CHT} was the derivation of a rough HJ equation for $V$:
\begin{equation}\label{d3}
\partial_t v(t,y) + \sup_{\gamma \in \mathcal P(U)} \left\{ \nabla v(t,y) \cdot \int b(t,y,a)\gamma(da) + F(t,y,\gamma) \right\} + \nabla v(t,y) \cdot \sigma(t,y)  \dot{\zeta}_t = 0 \, ,
\end{equation}
for $(t,y)\in[0,T]\times \R^n$, with terminal condition $v(T,y)=G(y)$.
We briefly recall our findings in this section.

%The dynamic programming principle implies that the optimal value function,
%\begin{align}\label{eq:value}
%	V(s,y) := \sup \left\{ J_{sT}(\gamma,y) : \gamma \in \mathcal K\right\},~\quad J_{sT}(\gamma,y) = \int_s^T F(r,x_r^\gamma, \gamma_r) dr + G(x_T^\gamma),
%\end{align}
%satisfies 
%\begin{align}
%	V(s_1,y) = \sup \left\{ \int_{s_1}^{s_2} F(r,x_r^{s_1,y,\gamma},\gamma_r) dr + V(s_2,x_{s_2}^{s_1,y,\gamma}) : \gamma \in \mathcal K \right\}
%\end{align}
%where
%\begin{align}\label{eq:rde}
%	x_t^{s,y,\gamma} = y + \int_s^t \int_U b(r,x_r^{s,y,\gamma},a)\gamma_r(da) dr + \int_s^t \sigma(r,x_r^{s,y,\gamma} d\zeta_r.
%\end{align}
%
%We assume that the following hypotheses are enforced.

First, consider the following definition of a rough test function. This definition parallels the definition of 
$\mathcal C^1$ test functions considered in the deterministic theory, albeit perturbed by a noisy term involving $\zeta$.

\begin{definition}\label{def:test}
	Let $\sigma$ be a coefficient such that Hypothesis~\ref{hyp:coeff} is fulfilled. We consider a path $\zeta \in \mathcal C^\alpha$ with $\alpha > 1/3$ verifying Hypothesis~\ref{hyp:zeta}. We say that $\psi \in \mathcal C^{\alpha,2}$ is an element of the set of test functions $\mathcal T_\sigma$ if there exists a drift term $\psi^t \in \mathcal C_b^{0,1}$ such that $\psi$ satisfies the following linear rough PDE:
	\begin{align}
		\delta\psi_{s_1,s_2}(y) = \int_{s_1}^{s_2} \psi_r^t(y) - \int_{s_1}^{s_2} \nabla \psi_r(y) \cdot \sigma^k(r,y) d\zeta_r^k,
	\end{align}
	for all $y\in \mathbb R^m$ and $0 \leq s_1\leq s_2\leq T$.
\end{definition}

The notion of a rough viscosity solution advocated in \cite{CHT} can now be spelled out as follows.
\begin{definition}\label{def:rough-visc}
	Let $\zeta \in \mathcal C^\alpha$ fulfilling Hypothesis~\ref{hyp:zeta}. The coefficients $b$ and $\sigma$ are assumed to satisfy Hypothesis~\ref{hyp:coeff} as before. Consider a path $v \in \mathcal Q_\zeta^\kappa$. Recall that the set $\mathcal T_\sigma$ is introduced in Definition~\ref{def:test}. We say that $v$ is a rough viscosity supersolution (resp. subsolution) of equation~\eqref{d3} if
	\[
		v_T(y) \geq (\text{resp.} \leq) G(y),
	\]
	and for every element $\psi \in \mathcal T_\sigma$ such that $v-\psi$ admits a local minimum (resp. maximum) at $(s,y)$, the drift $\psi^t$ satisfies
	\[
		\psi^t_s(y) \leq (\text{resp.} \geq) - \sup_{\gamma \in \mathcal K} H(s,y,\gamma,\nabla\psi_s(y)),
	\]
	where we recall that the Hamiltonian $H$ is 
	\begin{equation} \label{eq:Hamil}
		H(t,y,\gamma,p) = p \cdot \int_U b(t,y,a) \gamma(da) + F(t,y,\gamma).
	\end{equation}
The path	$v$ is a rough viscosity solution if it is both a rough viscosity supersolution and subsolution.
\end{definition}
Theorem~4.22 in~\cite{CHT} shows that the value function is the unique {\it rough} viscosity solution of the HJ equation. 
\begin{theorem}\label{thm:HJBequation}
	Consider a coefficient $\sigma \in \mathcal C_b^{1,3}([0,T]\times \mathbb R^m;\mathbb R^{m,d})$ as well as a reward function $F$ satisfying Hypothesis~\ref{hyp:reward}. The path $\zeta$ is assumed to satisfy Hypothesis~\ref{hyp:zeta}. Then, the HJB equation~\eqref{d3} admits a unique rough viscosity solution in the sense of Definition~\ref{def:rough-visc}. This solution is given by the value function $V$ introduced in \eqref{eq:V_sy}.
\end{theorem}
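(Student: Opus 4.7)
The plan is to follow the classical optimal control playbook adapted to the rough setting: establish a dynamic programming principle (DPP) for $V$, use it together with the defining rough PDE of the test functions in $\mathcal T_\sigma$ to verify the viscosity sub- and supersolution inequalities, and obtain uniqueness through a Doss--Sussmann type transformation that reduces \eqref{d3} to an ordinary HJB equation with pathwise (but non-rough) coefficients, to which a classical comparison principle applies.

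First I would prove the DPP
\[
V(s,y) = \sup_{\ga \in \cv^{\ep,L}([s,\tau];\ck)} \left\{ \int_s^\tau F(r, x_r^{s,y,\ga}, \ga_r)\, dr + V(\tau, x_\tau^{s,y,\ga}) \right\},
\]
for every $s\le\tau\le T$. The $\geq$ direction follows by concatenating controls on $[s,\tau]$ with near-optimal tails on $[\tau,T]$, noting that the H\"older-in-$W_2$ constraint \eqref{eq:cv^ep,L} is preserved up to enlarging $L$, thanks to compactness of $\ck$. The $\le$ direction requires a measurable selection of near-optimal controls on $[\tau,T]$ indexed by the endpoint $x_\tau^{s,y,\ga}$, which is routine given continuity of the flow provided by Proposition \ref{ExistUniq2}.

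Next, to verify the supersolution property, fix $\psi\in\mathcal T_\sigma$ such that $V-\psi$ attains a local minimum at $(s,y)$ and pick a constant control $\ga_0\in\ck$. For small $h>0$, combining the DPP with the local minimum property yields
\[
0 \ge \int_s^{s+h} F(r,x_r^{\ga_0},\ga_0)\,dr + \psi(s+h,x_{s+h}^{\ga_0})-\psi(s,y).
\]
I would then split $\psi(s+h,x_{s+h}^{\ga_0})-\psi(s,y)$ into a time increment at fixed space, expanded through Definition \ref{def:test}, plus a spatial increment at fixed time, expanded via the controlled path structure of $x^{\ga_0}$ in \eqref{eq:relaxed-rde} together with the composition rule of Proposition \ref{prop:smooth_of_weak}. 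The key cancellation is that the rough integrals $\nabla\psi_s(y)\cdot\sigma(s,y)\,\delta\zeta_{s,s+h}$ appearing to leading order in the two pieces have opposite signs and exactly cancel; dividing by $h$, letting $h\to 0$, and taking the supremum over $\ga_0\in\ck$ then yields $-\psi^t_s(y)\ge\sup_{\ga\in\ck}H(s,y,\ga,\nabla\psi_s(y))$. The subsolution direction is symmetric, using a local maximum.

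The main obstacle is uniqueness. I would let $\Phi^\zeta_t$ denote the rough flow of $d\phi = \sigma(t,\phi)\,d\zeta$ and set $\tilde v(t,y) = v(t,\Phi^\zeta_t(y))$. Using the strong composition rule of Proposition \ref{prop:str_comp}, one verifies that $\tilde v$ solves, in the classical viscosity sense, an ordinary HJB equation $\partial_t\tilde v + \sup_{\ga\in\ck}\{\tilde b(t,y,\ga)\cdot\nabla\tilde v + \tilde F(t,y,\ga)\}=0$, where $\tilde b,\tilde F$ are obtained by pulling $b,F$ back through $\Phi^\zeta$ and its Jacobian, and the noise is absorbed by the change of variables. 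A deterministic comparison principle for bounded continuous viscosity solutions then gives $\tilde v_1\le\tilde v_2$, hence $v_1\le v_2$, for any pair of rough sub/supersolutions. The technical heart, and what I expect to be the hard part, is showing that composition with $\Phi^\zeta$ induces a bijection between $\mathcal T_\sigma$ and the class of ordinary $\mathcal C^{1,2}$ test functions of the transformed equation, so that the rough viscosity notion really does transport to the classical one; this requires solving auxiliary linear rough transport equations of the form appearing in Definition \ref{def:test} and carefully tracking regularity under the rough change of variables.
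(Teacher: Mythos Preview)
The paper does not actually prove this theorem: it is stated in Section~2 as a recall of Theorem~4.22 in the authors' earlier paper~\cite{CHT}, so there is no proof here to compare against. Your outline is consistent with the strategy that~\cite{CHT} appears to follow, and in particular your uniqueness argument via composition with the rough flow $\Phi^\zeta$ is precisely the mechanism the present paper invokes later in Propositions~\ref{prop:rough-flow-a}--\ref{prop:rough-flow-c} (with explicit reference to~\cite{CHT} for details), so you have correctly identified the Doss--Sussmann transformation as the key device.

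One point in your sketch deserves more care. In the DPP you concatenate a control on $[s,\tau]$ with a near-optimal tail on $[\tau,T]$ and assert that the H\"older constraint in $\cv^{\ep,L}$ is preserved ``up to enlarging $L$''. But $L$ is \emph{fixed} in the definition~\eqref{eq:cv^ep,L} of the admissible class, so you cannot enlarge it; moreover, a naive concatenation will in general produce a discontinuity at $\tau$ in the $W_2$ metric, destroying H\"older regularity altogether. Handling this is one of the genuine technical issues in~\cite{CHT}: one needs either a mollification near the junction that stays in $\ck$ and keeps the same $(\ep,L)$, or an argument showing that the value function is insensitive to the choice of $L$ so that one may work in a larger class. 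You should flag this rather than wave it away. The rest of your plan---the cancellation of the leading rough increments $\nabla\psi_s(y)\cdot\sigma(s,y)\,\delta\zeta$ when expanding $\psi$ along the controlled path, and the transport of the test-function class $\mathcal T_\sigma$ under $\Phi^\zeta$---is the right picture.
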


In \cite[Proposition 4.4]{CHT} it was shown that an optimal pair $(\overline{x},\overline{\gamma})$, corresponding to the value $V$ in Theorem~\ref{thm:HJBequation}, exists. We explicitly lay it out here for future use.

\begin{theorem}\label{thm:Optimal Pair} Let $F,G$ be functions satisfying Hypothesis \ref{hyp:reward}. Assume that Hypothesis~\ref{hyp:coeff} is fulfilled, so that Proposition \ref{ExistUniq2} holds. The space $\mathcal{K}$ is the subspace of $\mathcal{P}(U)$ mentioned in Definition \ref{def:rel-control}, and $\mathcal{V}^{\varepsilon,L}$ is the set of H\"older paths given in \eqref{eq:cv^ep,L}.  Then there exists a (non necessarily unique) $\overline{\gamma}\in \mathcal{V}^{\varepsilon,L}$ achieving the maximum $V(0,y)$ defined in (\ref{eq:V_sy}). The pair $(\overline{x},\overline{\gamma})$ is called an optimal pair, where $\overline{x}$ is the unique solution of (\ref{d1}) with the relaxed control taken as $\overline{\gamma}$. 
    
\end{theorem}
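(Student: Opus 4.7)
The plan is to apply the direct method of the calculus of variations. Pick a maximizing sequence $\{\ga^n\}_{n\geq 1}\subset \cv^{\ep,L}$ with $J_{0,T}(\ga^n,y)\to V(0,y)$, and extract a subsequential limit $\overline{\ga}\in\cv^{\ep,L}$ which one then shows satisfies $J_{0,T}(\overline{\ga},y)=V(0,y)$.

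First I would establish that $\cv^{\ep,L}$ is sequentially compact in $\cac([0,T];\ck)$ equipped with the uniform $W_2$ topology. Equicontinuity of elements of $\cv^{\ep,L}$ is built into its definition through the uniform H\"older condition $W_2(\ga_s,\ga_t)\leq L|t-s|^{\ep}$, and pointwise relative compactness is guaranteed by the compactness of $\ck\subset\cp(U)$ from Definition \ref{def:rel-control}. A version of Arzel\`a--Ascoli for metric space valued paths then yields a convergent subsequence $\ga^{n_k}\to\overline{\ga}$ uniformly in $W_2$. Closedness of $\cv^{\ep,L}$ is immediate: the H\"older bound and the constraint $\overline{\ga}_s\in\ck$ both pass to the uniform limit, so $\overline{\ga}\in\cv^{\ep,L}$.

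The second step is a stability property for the rough differential equation~\eqref{eq:relaxed-rde}: if $\ga^{n_k}\to\overline{\ga}$ uniformly in $W_2$, then $x^{\ga^{n_k}}\to x^{\overline{\ga}}$ in $\cq_{\bzeta}^{\ka}$ (in particular uniformly on $[0,T]$). Since the rough driver $\bzeta$ is the same for every control, only the drift $s\mapsto B(s,x,\ga_s)=\int_U b(s,x,a)\ga_s(da)$ varies with $\ga$. The coefficient $b$ is bounded and Lipschitz in $a$, uniformly in $(s,x)$ by Hypothesis~\ref{hyp:coeff}, so by Kantorovich duality (with $W_1\leq W_2$) one has $B(s,x,\ga^{n_k}_s)\to B(s,x,\overline{\ga}_s)$ uniformly in $(s,x)$. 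A standard stability estimate for RDEs with perturbed drift, together with the a priori bound in Proposition~\ref{ExistUniq2} and the controlled path decomposition of Proposition~\ref{prop:integral_as_weak}, delivers the desired convergence. Uniform convergence of $x^{\ga^{n_k}}$ to $x^{\overline{\ga}}$ then combines with continuity of $G$ and joint Lipschitz continuity and boundedness of $F$ from Hypothesis~\ref{hyp:reward} to pass to the limit in the cost functional~\eqref{eq:J} via dominated convergence. This gives $J_{0,T}(\overline{\ga},y)=\lim_k J_{0,T}(\ga^{n_k},y)=V(0,y)$, and the pair $(\overline{x},\overline{\ga})$, where $\overline{x}$ is the unique solution of~\eqref{eq:relaxed-rde} associated with $\overline{\ga}$ (Proposition~\ref{ExistUniq2}), is optimal.

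The main obstacle is the stability estimate in the rough setting. One needs to check that the $\cq_{\bzeta}^{\ka}$ bound \eqref{eq:6-b} depends on the drift only through its $\cac_b^0$ norm and the uniform $W_2$ regularity of $\ga$, so that the uniform convergence $B(\cdot,\cdot,\ga^{n_k}_\cdot)\to B(\cdot,\cdot,\overline{\ga}_\cdot)$ can be upgraded to convergence of the solutions. This is not a completely standard RDE stability statement because the drift is only uniformly continuous (no H\"older regularity is available for the limit $\overline{\ga}$ beyond the built-in $\ep$-H\"older constraint of $\cv^{\ep,L}$), so care is needed to track the dependence of constants on the controls. Once this is established, the remaining steps of the argument are routine.
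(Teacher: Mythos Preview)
The paper does not actually prove this theorem: it is simply stated, preceded by the remark that the result ``was shown'' in \cite[Proposition~4.4]{CHT} and is being ``laid out here for future use.'' There is therefore no in-paper argument to compare against.

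Your outline via the direct method---Arzel\`a--Ascoli compactness of $\cv^{\ep,L}$ in $\cac([0,T];\ck)$, stability of the RDE under uniform-$W_2$ convergence of the control, and continuity of $J_{0,T}$---is the natural and presumably intended route. One small comment on your closing caveat: the concern that ``no H\"older regularity is available for the limit $\overline{\ga}$ beyond the built-in $\ep$-H\"older constraint'' is not really an obstacle. Since the limit $\overline{\ga}$ remains in $\cv^{\ep,L}$, every control along the sequence (and the limit) shares the same $\ep$-H\"older modulus, so all constants in the RDE a~priori bounds can be taken uniform over the maximizing sequence. The stability step is indeed the only place requiring genuine work, but note that the paper itself later proves a closely related estimate (Proposition~\ref{prop: variant-AC}), bounding the controlled-path distance between two RDE solutions in terms of an integrated Fortet--Mourier distance between the controls; on the compact $\ck$ this is comparable to $W_2$, and would give you exactly the continuity you need.
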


\section{Pontryagin Maximum Principle: Relaxed Control Setting}\label{sec: Pontry}

The Pontryagin maximum principle in the classical setting is a necessary condition for the optimality of a dynamics-control pair. Furthermore, it is seen as an equation describing the evolution of Lagrange multipliers related to the optimization problem (\ref{eq:V_sy}). In this section we derive this principle in our rough context. It will also be a way to introduce techniques which will be relevant for upcoming computations. 

\subsection{Statement of Theorem and Strategy}
We directly start by stating the main theorem of this section. As mentioned above, it gives a variational characterization of an optimal dynamics-control pair.
\begin{theorem}\label{Pontryagin}
			Let the assumptions of Theorem \ref{thm:HJBequation} and Theorem \ref{thm:Optimal Pair} prevail.  In addition, $F$ and $G$ are modified to be in $C_{b}^{0,1,0}([0,T]\times \R^m \times \mathcal{P}(U);\R)$ and $C_{b}^{1}(\R^m;\R)$ respectively. Suppose that $(\overline{x},\overline{\gamma})$ is an optimal pair (which exists thanks to Theorem \ref{thm:Optimal Pair}). Also, let $p\colon [0,T]\to \R^m$ be the unique solution of the following backward RDE:
	\begin{equation}\label{RDEp}
				\begin{cases}
					-dp_{t}=\left(\int_{U}Db(t,\overline{x}_{t},a)\overline{\gamma}_{t}(da)\right)^{\text{T}}p_{t}\,dt+D\sigma(t,\overline{x}_{t})p_{t}\,d\zeta_{t} +DF(t,\overline{x}_{t},\overline{\gamma}_{t})\,dt\\
					\hspace{0.43cm}p_{T}=DG(x_{T})
				\end{cases}
			\end{equation}
			Then, one has
			\begin{multline}\label{PMP}
				\left<\int_{U}b(t,\overline{x}_{t},a)\overline{\gamma}_{t}(da),p_{t} \right>+F(t,\overline{x}_{t},\overline{\gamma}_{t})\\
				=
				\sup_{\gamma \in \mathcal{P}(U)}\left\{ \left<\int_{U}b(t,\overline{x}_{t},a)\gamma(da),p_{t} \right> + F(t,\overline{x}_{t},\gamma)  \right\}.
			\end{multline}
		\end{theorem}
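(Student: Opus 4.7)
The plan is to prove Theorem~\ref{Pontryagin} by a spike-variation argument adapted to the rough-path setting, in the spirit indicated in the introduction. Fix $\tau \in (0,T)$ and a test measure $\mu \in \cp(U)$, and for small $h > 0$ define a perturbed control $\ga^h$ that equals $\mu$ on a window of length $h$ placed around $\tau$ and coincides with $\overline{\ga}$ outside this window. Proposition~\ref{ExistUniq2} was deliberately stated under weakened regularity assumptions on $\ga$ precisely to cover this case: $\ga^h$ gives rise to a well-defined trajectory $x^h$ solving \eqref{eq:relaxed-rde}, and after a mild endpoint correction (negligible at first order in $h$) one can arrange that optimality of $\overline{\ga}$ yields $J_{0,T}(\ga^h,y_0) \leq J_{0,T}(\overline{\ga},y_0)$.

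The central analytic step is a first-order expansion of $y^h_t := x^h_t - \overline{x}_t$. On the spike window $y^h_t$ is produced by the drift gap $\Delta b(\tau) := \int_U b(\tau,\overline{x}_\tau,a)[\mu - \overline{\ga}_\tau](da)$, and past the window the two RDEs share the same rough driver and merely propagate this gap through the linearised flow. Combining the Jacobian estimates for rough flows from \cite{CHT} with the controlled-path machinery of Definitions~\ref{def:weakly-ctrld}--\ref{def:str-contr}, I expect an expansion $y^h = h\,y + o(h)$ in $\cq^\ka_\zeta$, where the variation $y$ solves the linear rough equation
\begin{equation*}
y_t = \Delta b(\tau) + \int_\tau^t A_r\, y_r\,dr + \int_\tau^t B_r\, y_r\, d\zeta_r, \qquad t \in [\tau,T],
\end{equation*}
with $A_r := \int_U Db(r,\overline{x}_r,a)\overline{\ga}_r(da)$ and $B_r := D\sigma(r,\overline{x}_r)$. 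Using the $C^1$ regularity of $F$ in $x$ and of $G$ assumed in Theorem~\ref{Pontryagin}, the induced expansion of the cost reads
\begin{equation*}
\lim_{h \to 0^+}\frac{J_{0,T}(\ga^h,y_0) - J_{0,T}(\overline{\ga},y_0)}{h} = F(\tau,\overline{x}_\tau,\mu) - F(\tau,\overline{x}_\tau,\overline{\ga}_\tau) + \int_\tau^T DF(r,\overline{x}_r,\overline{\ga}_r)\,y_r\,dr + DG(\overline{x}_T)\,y_T,
\end{equation*}
and optimality forces this limit to be nonpositive.

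The next step is to eliminate the variation $y$ by pairing with the adjoint process $p$ from \eqref{RDEp}. Because $\zeta$ is a geometric rough path (Hypothesis~\ref{hyp:zeta}), the rough product rule of Proposition~\ref{prop:der_rules} applies to $\langle p_t, y_t\rangle$ without an It\^o--Stratonovich correction. A direct calculation shows that the coefficients $-B_r^\tr p_r$ in $dp$ and $B_r y_r$ in $dy$ cancel against one another in $d\langle p, y\rangle$, as do the $A_r$-drifts, leaving $d\langle p_r, y_r\rangle = -\langle DF(r,\overline{x}_r,\overline{\ga}_r), y_r\rangle\, dr$. Integrating from $\tau$ to $T$ with $p_T = DG(\overline{x}_T)$ and $y_\tau = \Delta b(\tau)$ and substituting into the variational inequality, the $DF$ and $DG$ terms cancel exactly and the inequality reduces to
\begin{equation*}
H(\tau,\overline{x}_\tau,\mu,p_\tau) - H(\tau,\overline{x}_\tau,\overline{\ga}_\tau,p_\tau) \leq 0,
\end{equation*}
with $H$ as in \eqref{eq:Hamil}. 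Since $\mu \in \cp(U)$ is arbitrary and $\overline{x},\,p$ are continuous, this gives \eqref{PMP} at every $\tau \in [0,T]$ where $\overline{\ga}$ is $W_2$-continuous, which is every $\tau$ under Hypothesis~\ref{hyp:rel-control}.

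The hardest part is to justify the first-order expansion $x^h = \overline{x} + h\,y + o(h)$ rigorously in a topology strong enough to pass to the limit in the cost. Because the perturbation is localised on a set of Lebesgue measure $h$ and rough integrals depend on the full controlled-path data (Gubinelli derivative plus remainder), one must control $x^h - \overline{x}$ not merely in sup-norm but in $\cq^\ka_\zeta$, using the rough flow Jacobian to transport the effective jump $\Delta b(\tau)\,h$ through the linearised equation. A related technical point is the well-posedness and $\cq^\ka_\zeta$-regularity of the backward RDE \eqref{RDEp}, which I would handle via the time-reversed rough path $\tilde{\zeta}_t := \zeta_{T-t}$ together with the composition rule of Proposition~\ref{prop:str_comp} and the controlled integration of Proposition~\ref{prop:integral_controlled}; this is what underpins the rough product rule applied to $\langle p, y\rangle$ above.
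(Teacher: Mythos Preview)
Your proposal is correct and follows essentially the same route as the paper: spike variation of $\overline{\ga}$, first-order expansion of the perturbed trajectory via a linear variational RDE, Taylor expansion of the cost, and cancellation by pairing with the adjoint $p$ through the product rule for geometric rough paths. The paper carries out your ``hardest part'' by introducing an auxiliary process $Y^{\beta}$ solving the variational equation with forcing $L^{\beta}_t=\int_{t_0}^{t\wedge(t_0+\beta)}\Delta b(r)\,dr$, differentiating both $\beta\mapsto x^{\beta}$ and $\beta\mapsto Y^{\beta}$ to obtain derivative processes $V^{\beta},W^{\beta}$ solving nearby linear RDEs, and comparing them via a stability estimate to conclude $\|x^{\beta}-\overline{x}-Y^{\beta}\|_{\infty}=o(\beta)$ (only sup-norm is needed, not the full $\cq_{\zeta}^{\ka}$ norm you mention); the product rule is then applied to $\langle p, Y^{\beta}\rangle$ and one divides by $\beta$ at the end, which is equivalent to working directly with your limit object $y=\lim_{\beta\to 0}Y^{\beta}/\beta$.
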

        \begin{remark}
            The assumptions on $F$ and $G$ can be relaxed to Assumptions $(D1)$-$(D2)$-$(D3)$ from \cite[p.~102]{yong-zhou}. We have refrained to do so for sake of simplicity.
        \end{remark}
        \begin{remark} Let us comment on the dimensions and definitions of the coefficients in equation~(\ref{RDEp}). As mentioned in Hypothesis \ref{hyp:coeff}, the function $\sigma$ is defined as 
        \[\sigma \colon [0,T]\times\R^{m}\longrightarrow \R^{m}\otimes\R^{d},\]
        where we write the set of matrices $\R^{m,d}$ as a tensor $\R^{m}\otimes \R^{d}$. With this notation in hand one can define $D\sigma$ as a map
        \[D\sigma\colon [0,T]\times \R^{m}\longrightarrow\R^m \otimes \R^{d}\otimes\R^m,
        \quad\text{with}\quad
        [D\sigma (t,x)]^{ijk}=\partial_{x_{k}}\sigma^{ij}(t,x).
        \]
Products are then understood in the usual way. For instance, for $p\in \R^{m}$ we have
        \begin{equation}
            D\sigma (t,x)p\in \R^{m}\otimes \R^{d},
        \quad\text{with}\quad
[D\sigma(t,x)p]^{ij}=\sum_{k=1}^{m}[D\sigma(t,x)]^{ijk}p^{k} .
        \end{equation}
In the same way, for all $(t,x,a)\in [0,t]\times \R^{m}\times U$, we have $Db(t,x,a)\in \R^{m}\otimes \R^{m}$.
            \end{remark}
        \begin{remark}
            In order to make Pontryagin's principle of practical interest, one should plug equation (\ref{RDEp}) into a dynamics for Lagrange type multipliers. We have not pursued this goal here, since our practical considerations will rely on other methods.
        \end{remark}
        As mentioned in the introduction, our method of proof for Theorem \ref{Pontryagin} is based on differentiation procedures along perturbations of the optimal couple $(\overline{x},\overline{\gamma})$. Those perturbations, called spike variations in the literature, are defined as follows:
        \begin{definition} \label{def:spike variation}
          Suppose that $(\overline{x},\overline{\gamma})$ is an optimal pair. Consider a time variable $t_{0}\in[0,T)$. For an additional variable $\beta>0$, define the interval $I=I_{\beta}=[t_{0},t_{0}+\beta]$ and assume $I\subset [0,T]$. Then we define the spike variation $\gamma^{\beta}$ of the optimal relaxed control as
          \begin{equation}\label{spike}\gamma^{\beta}_{t}=\begin{cases} \mu \hspace{0.2cm} \text{ if } \hspace{0.2cm} t\in I\\
			    \overline{\gamma}_{t} \hspace{0.2cm} \text{ if } \hspace{0.2cm} t\notin I
			\end{cases}\end{equation}
            where $\mu$ is a constant probability measure sitting in $\ck$ (recall that $\ck$ is the compact set of $\mathcal{P}(U)$ introduced in Definition~\ref{def:rel-control}).
           \end{definition}
           \begin{remark}
               Defining the perturbations like in ($\ref{spike}$) is one way to circumvent the fact that $\mathcal{P}(U)$ is not a vector space (which prevents perturbations of the form $\overline{\gamma}_{t}+\beta \mu$). Another advantage of spike variations, with respect to other variational forms like flat derivatives (see e.g~\cite[Appendix]{LMS} or~\cite[Chapter 5]{CDL} for an overview of differentiation notions for measures), is that spike variations entail a local information in time. This is precious for optimization purposes.
               \end{remark}
               \begin{remark}
                   Theorem \ref{Pontryagin} is an analog of \cite[Theorem~7]{diehl} to the relaxed case. The proof of the latter theorem hinged on using rough flow transformations to reduce the noisy problem to a deterministic problem with random coefficients. Although we could have invoked the same strategy here, we have resorted to a different approach. Specifically, we use the differentiability of the solutions of the RDEs with respect to the perturbation, and make use of known estimates of the Jacobian of the flow map. This strategy will also feature in our section on the $Q$-function. 
               \end{remark}

\subsection{Differentiating along spike variations}\label{sec: Prelim}

In this section we establish preliminary results concerning differentiation along paths driven by the spike variation \eqref{spike}.
We shall state and prove three Lemmas related to the behaviors of $x^{\beta}$, $\overline{x}$, and $Y^{\beta}$. Before doing so, we will introduce a slight variation of some well known terminology and results found in \cite[Chapter 11 and Chapter 13]{friz-victoir}.

\begin{proposition}\label{prop: Flow and Jacobian}
    Let us suppose that the coefficients $b$ and $\sigma$ satisfy Hypothesis \ref{hyp:coeff}. We also assume that $\zeta$ is a rough path fulfilling Hypothesis \ref{hyp:zeta}. The solution of the RDE (\ref{eq:relaxed-rde}) with initial condition $y$ is given by a flow map $x_{t}=U_{t\leftarrow s}^{\zeta}(y)$. In this context, the Jacobian of $U$ is defined by
    \[J_{t\leftarrow s}^{\zeta}(y)\cdot e := \frac{d}{d\varepsilon}\bigg|_{\varepsilon=0}U_{t\leftarrow s}^{\zeta}(y+\varepsilon e) .\]
    Then the following facts hold true for $J_{t\leftarrow s}$:
\begin{enumerate}[wide, labelwidth=!, labelindent=0pt, label=\textnormal{(\roman*)}]
\setlength\itemsep{.1in}    

\item Setting $\mathcal{V}_{s,t}^{e}=J_{t\leftarrow s}(y)\cdot e$, then $\mathcal{V}$ solves a RDE on $[s,T]$. Namely, for $t\in [s,T]$ we have 
    \begin{equation}\label{eq: Jacobian-RDE}
        \mathcal{V}_{s,t}^{e}=e + \int_{s}^{t}\left(\int_{U}Db(r,x_{r},a)\gamma_{r}(da) \right) \mathcal{V}_{s,r}^{e}\,dr +\int_{s}^{t}D\sigma (r,x_{r})\mathcal{V}_{s,r}^{e}\, d\zeta_{r}.
    \end{equation}

  \item The following estimate hold:
    \begin{equation} \label{ineq: Jacob}
        \|J_{\cdot \leftarrow s}(y)\|_{\infty;[s,T]}\leq C\exp{(CT\|\zeta\|_{\alpha}^{1/\al})} . \end{equation}
      \end{enumerate}
\end{proposition}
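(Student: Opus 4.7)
The plan is to realize the Jacobian $\mathcal{V}^{e}_{s,t}$ as the derivative flow of the RDE \eqref{eq:relaxed-rde} with respect to its initial condition, and then to read off both statements from standard differentiability results in rough path theory (see \cite{friz-victoir}, Ch.~11), adapted to our relaxed control setting. The only mild novelty is that the drift carries an integrated control $\int_U b(r,\cdot,a)\,\gamma_r(da)$, but since $\gamma_r$ is a probability measure and $b$, $Db$ are bounded and Lipschitz in $x$ uniformly in $(r,a)$ by Hypothesis \ref{hyp:coeff}, this integrated drift inherits the same regularity as $b$ itself.

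For part (i), I would proceed perturbatively. For small $\varepsilon$, Proposition \ref{ExistUniq2} gives a well-defined perturbed solution $x_{t}^{\varepsilon}=U^{\zeta}_{t\leftarrow s}(y+\varepsilon e)$ living in $\mathcal{Q}^{\kappa}_{\zeta}$, and the continuity of the It\^o--Lyons map ensures $x^{\varepsilon}\to x$ in $\mathcal{Q}^{\kappa}_{\zeta}$ as $\varepsilon\to 0$. Writing first-order Taylor expansions
\[
b(r,x_{r}^{\varepsilon},a)-b(r,x_{r},a)
= Db(r,x_{r},a)(x_{r}^{\varepsilon}-x_{r}) + O(|x_{r}^{\varepsilon}-x_{r}|^{2}),
\]
and similarly for $\sigma$, dividing by $\varepsilon$ and passing to the limit yields the candidate equation \eqref{eq: Jacobian-RDE} for $\mathcal{V}^{e}_{s,t}=\lim_{\varepsilon\to 0}(x_{t}^{\varepsilon}-x_{t})/\varepsilon$. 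The drift limit is immediate by dominated convergence, using that $\gamma_r$ is a probability measure and $Db$ is bounded. The rough integral limit follows from the stability of the rough integral in the $\cq^{\ka}_{\zeta}$-topology (Proposition~\ref{prop:integral_as_weak}), applied to the controlled path $D\sigma(r,x_r)\mathcal{V}^{e}_{s,r}$. The resulting linear RDE has a unique $\mathcal{Q}^{\kappa}_{\zeta}$-solution by a direct fixed point argument (the coefficients $Db, D\sigma$ being bounded), which must then coincide with $\mathcal{V}^{e}$.

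For part (ii), the equation \eqref{eq: Jacobian-RDE} is \emph{linear} in $\mathcal{V}^{e}$ with coefficients $A(r)=\int_{U}Db(r,x_{r},a)\gamma_{r}(da)$ and $B(r)=D\sigma(r,x_{r})$ which are bounded on $[s,T]$ (again by Hypothesis \ref{hyp:coeff} and the fact that $\gamma_r\in\mathcal{P}(U)$). The bound \eqref{ineq: Jacob} is then the standard a~priori estimate for linear RDEs (the analogue of the Gronwall bound for linear ODEs), whose proof goes back to Davie and is spelled out, e.g., in \cite[Thm.~10.53]{friz-victoir}: the exponent $1/\alpha$ reflects the number of subintervals needed to iterate a local contraction estimate across $[s,T]$.

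The main technical point I expect is the justification of interchanging $\partial_{\varepsilon}|_{\varepsilon=0}$ with the rough integral $\int_{s}^{t}\sigma(r,x_{r}^{\varepsilon})\,d\zeta_{r}$. This is not a formal manipulation but rests on the continuity of the rough integration map as a function of the controlled-path data, which is precisely what the sewing lemma (Proposition~\ref{prop:La}) and our integration results in Section~\ref{sec:weak-contr} deliver. Once this stability statement is invoked, the remaining arguments are standard.
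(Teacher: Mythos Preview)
Your proposal is correct and follows essentially the same approach as the paper: the paper's proof is a one-line reference to \cite[Chapter~11]{friz-victoir} for item~(i) and to \cite[Proposition~11.13]{friz-victoir} for item~(ii), and your sketch simply unpacks what those results entail in the relaxed-control setting. Your observation that the integrated drift $\int_U b(r,\cdot,a)\,\gamma_r(da)$ inherits the regularity of $b$ is the only adaptation needed, and this is implicit in the paper's ``straightforward elaboration'' remark.
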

\begin{proof}
   The proof of (i) is a straightforward elaboration of \cite[Chapter 11]{friz-victoir}. In particular, item (ii) is proved along the same lines as \cite[Proposition 11.13]{friz-victoir}. 
   \end{proof}

We are now ready to state a lemma about the behavior of the dynamics according to the size of $\beta$ in the spike variation.
\begin{lemma} \label{Lemma: X}
We work under the same assumptions as in Proposition \ref{prop: Flow and Jacobian}. 
For $\beta>0$ let $x^{\beta}$ be the solution to~(\ref{eq:relaxed-rde}), considered with $s=0$ and the relaxed control $\gamma^{\beta}$ in (\ref{spike}). Also, recall from Definition~\ref{def:spike variation} that $(\overline{x},\overline{\gamma})$ is an optimal pair. Then the function $\beta\mapsto x^{\beta}$ is right differentiable, where $x^{\beta}$ has to be understood as an element of $\mathcal{Q}_{\zeta}^{\kappa}(\R^n)$. Moreover, the right derivative $V^{\beta}:= \partial_{\beta}x^{\beta}$ satisfies the following linear rough differential equation: For $t<t_{0}+\beta$ we have $V^{\beta}_{t}=0$ and for $t\geq t_{0}+\beta$ the path $V^{\beta}$ solves 
\begin{equation} \label{eq: RDE for derivative}
V^{\beta}_{t}=g^{\beta}+\int_{t_{0}+\beta}^{t}\lp\int_{U}Db(r,x_{r}^{\beta},a)\ga_{r}^{\beta}(da)\rp V_{r}^{\beta}dr +\int_{t_{0}+\beta}^{t}D\sigma(r,x_{r}^{\beta})V_{r}^{\beta}d\zeta_{r},
\end{equation}
where the quantity $g^{\beta}$ is given by
\begin{equation} \label{eq: g-beta}
g^{\beta}=\int_{U}b(t_{0}+\beta,x^{\beta}_{t_{0}+\beta},a)\mu(da)-\int_{U}b(t_{0}+\beta,x^{\beta}_{t_{0}+\beta},a)\overline{\ga}_{t_{0}+\beta}(da). 
\end{equation}
In addition, the following estimate holds true:
\begin{equation} \label{Ineq: derivative_of_dynamics}
\mathcal{N}\left[V^{\beta};\mathcal{Q}_{\zeta}^{\ka}([0,T];\R^m)\right]\leq C\exp\lp CT\|\zeta\|_{\alpha}^{1/\alpha} \rp .
\end{equation}
In particular, one has
\begin{equation}\|x^{\beta}-\overline{x}\|_{\infty;[0,T]}=O(\beta) .
\label{eq: Pontry-1}\end{equation}
        \end{lemma}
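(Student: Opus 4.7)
The strategy is to compare $x^{\beta+h}$ and $x^{\beta}$ for small $h>0$ and extract a limit by reducing the problem to the flow linearization already recorded in Proposition~\ref{prop: Flow and Jacobian}. The starting observation is that, on the interval $[0,t_{0}+\beta]$, the two relaxed controls $\gamma^{\beta+h}$ and $\gamma^{\beta}$ coincide (both are equal to $\overline{\gamma}$ on $[0,t_{0}]$ and to $\mu$ on $[t_{0},t_{0}+\beta]$). By uniqueness for~\eqref{eq:relaxed-rde} given by Proposition~\ref{ExistUniq2}, this forces $x^{\beta+h}_{t}=x^{\beta}_{t}$ on $[0,t_{0}+\beta]$, which already yields $V^{\beta}_{t}=0$ for $t\leq t_{0}+\beta$ once the right derivative is shown to exist. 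I would then treat the short interval $[t_{0}+\beta,t_{0}+\beta+h]$ separately: on this interval $\gamma^{\beta+h}_{s}=\mu$ while $\gamma^{\beta}_{s}=\overline{\gamma}_{s}$, so subtracting the two instances of~\eqref{eq:relaxed-rde} and using the fact that the rough integrals cancel up to higher order (both are driven by $\sigma(\cdot,x^{\cdot}_{\cdot})$ evaluated along paths that differ by $O(h)$) gives
\[
x^{\beta+h}_{t_{0}+\beta+h}-x^{\beta}_{t_{0}+\beta+h}
= h\,g^{\beta}+o(h),
\]
with $g^{\beta}$ defined in~\eqref{eq: g-beta}. Here the uniform continuity of $b$ in time from Hypothesis~\ref{hyp:coeff} and the short-time estimate coming from the controlled-path norm in Proposition~\ref{prop:integral_as_weak} are what allow us to freeze the coefficients at $t_{0}+\beta$ and replace the $O(h)$-close paths by $x^{\beta}_{t_{0}+\beta}$.

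After $t_{0}+\beta+h$, both $x^{\beta+h}$ and $x^{\beta}$ solve the same RDE driven by $\overline{\gamma}$ (since $\gamma^{\beta+h}_{s}=\gamma^{\beta}_{s}=\overline{\gamma}_{s}$ there), only from different initial conditions at time $t_{0}+\beta+h$. Using the flow representation from Proposition~\ref{prop: Flow and Jacobian}, I would write
\[
x^{\beta+h}_{t}-x^{\beta}_{t}
=U^{\zeta}_{t\leftarrow t_{0}+\beta+h}\!\bigl(x^{\beta+h}_{t_{0}+\beta+h}\bigr)
-U^{\zeta}_{t\leftarrow t_{0}+\beta+h}\!\bigl(x^{\beta}_{t_{0}+\beta+h}\bigr)
=J^{\zeta}_{t\leftarrow t_{0}+\beta+h}\!\bigl(x^{\beta}_{t_{0}+\beta+h}\bigr)\cdot\bigl(hg^{\beta}+o(h)\bigr),
\]
by a first-order Taylor expansion of the smooth map $U^{\zeta}_{t\leftarrow\cdot}$. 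Dividing by $h$ and sending $h\to 0^{+}$, the right derivative $V^{\beta}_{t}$ exists and equals $J^{\zeta}_{t\leftarrow t_{0}+\beta}(x^{\beta}_{t_{0}+\beta})\cdot g^{\beta}$, which by~\eqref{eq: Jacobian-RDE} of Proposition~\ref{prop: Flow and Jacobian} is exactly the solution of the linear RDE~\eqref{eq: RDE for derivative} with initial value $g^{\beta}$ at time $t_{0}+\beta$. The estimate~\eqref{Ineq: derivative_of_dynamics} is then immediate from the boundedness of $b$ (whence $|g^{\beta}|\leq 2\|b\|_{\infty}$) and the Jacobian bound~\eqref{ineq: Jacob}, combined with the a priori control of the norm of solutions of linear RDEs driven by $\zeta$ in the controlled-path space $\mathcal{Q}_{\zeta}^{\kappa}$.

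Finally, to get the crude bound~\eqref{eq: Pontry-1}, one may view $\overline{x}$ as $x^{0}$ (so that $\overline{\gamma}=\gamma^{0}$) and invoke the same argument applied between $\beta=0$ and a general $\beta$: since $x^{\beta}-\overline{x}=\int_{0}^{\beta}V^{s}\,ds$ and $\|V^{s}\|_{\infty;[0,T]}$ admits the uniform bound just established, one concludes $\|x^{\beta}-\overline{x}\|_{\infty;[0,T]}\leq C\beta$. Alternatively, comparing $x^{\beta}$ with $\overline{x}$ directly and using that $\gamma^{\beta}$ differs from $\overline{\gamma}$ only on the interval of length $\beta$ gives an $O(\beta)$ perturbation of the drift of~\eqref{eq:relaxed-rde}, which propagates through the flow thanks to~\eqref{ineq: Jacob}. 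The main technical obstacle will be to justify the convergence of $h^{-1}(x^{\beta+h}-x^{\beta})$ to $V^{\beta}$ in the controlled-path norm $\mathcal{N}[\,\cdot\,;\mathcal{Q}_{\zeta}^{\kappa}]$ rather than just uniformly: this requires tracking not only the primary process but also its Gubinelli derivative and remainder in the decomposition~\eqref{eq:weakly-controlled}, for which the continuity of the flow map and its derivative in the $\mathcal{Q}_{\zeta}^{\kappa}$-topology (standard in rough path theory, see Proposition~\ref{prop:smooth_of_weak}) is the key ingredient.
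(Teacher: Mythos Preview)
Your approach is correct but takes a genuinely different route from the paper's. The paper does \emph{not} use the flow representation to derive the equation for $V^{\beta}$; instead it argues via Picard iterations. Concretely, the paper defines the Picard map $\Gamma\colon w\mapsto z$ associated with the RDE, assumes inductively that $\beta\mapsto x^{\beta,k}$ is differentiable, and then shows that $\beta\mapsto \Gamma(x^{\beta,k})$ is differentiable by directly computing the limit of $(z^{\beta+h}-z^{\beta})/h$. This difference quotient is split into three pieces: a drift term $I_{h}$ coming from the change of state in $b$, a term $II_{h}$ coming from the change of control (the spike itself), and a rough integral term $III_{h}$. The limit of $III_{h}$ is handled by explicitly writing the rough integral via the sewing map and checking that $\partial_{\beta}\Lambda(\ell^{\beta})=\Lambda(\partial_{\beta}\ell^{\beta})$. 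Once differentiability is established at each Picard step with uniform bounds, a compactness argument passes it to the fixed point $x^{\beta}$. Only afterwards, in the bounding step, does the paper invoke the Jacobian identity $V^{\beta}_{t}=J^{\zeta}_{t\leftarrow t_{0}+\beta}(x^{\beta}_{t_{0}+\beta})\cdot g^{\beta}$ to deduce~\eqref{Ineq: derivative_of_dynamics}.

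Your flow-based argument is shorter and more conceptual: you identify $V^{\beta}$ directly as the Jacobian applied to the initial perturbation $g^{\beta}$, and read off equation~\eqref{eq: RDE for derivative} from~\eqref{eq: Jacobian-RDE} rather than deriving it by hand. The price is the point you flag at the end: the paper's Picard scheme is naturally set up in $\mathcal{Q}_{\zeta}^{\kappa}$ and yields convergence in that topology via the compactness argument, whereas your Taylor expansion of $U^{\zeta}_{t\leftarrow t_{0}+\beta+h}$ gives pointwise (in $t$) convergence first, and the upgrade to the controlled-path norm requires separately invoking continuity of the flow and its Jacobian in $\mathcal{Q}_{\zeta}^{\kappa}$ with respect to both initial data and initial time. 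Both routes ultimately rely on the same ingredients from the flow theory in Proposition~\ref{prop: Flow and Jacobian}, so the difference is mainly one of organization.
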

        
\begin{proof}
For a generic $\beta>0$, consider the rough differential equation
\begin{equation} \label{eq: dynamics-beta-0}
    x_{t}^{\beta}=y+\int_{0}^{t}\lp \int_{U}b(r,x_{r}^{\beta},a)\ga_{r}^{\beta}(da)\rp\,dr+\int_{0}^{t}\sigma(r,x_{r}^{\beta})\,d \zeta_{r}.
\end{equation}
Then, by Proposition \ref{ExistUniq2} there exists a unique solution $x^{\beta}$ for every given initial value $y$. So for $t\geq t_{0}$, we can consider the flow map $U_{t\leftarrow t_{0}}(\overline{x}_{t_{0}})=x_{t}^{\beta}$ (Here we use the fact that $x^{\beta}\equiv \overline{x}$ on $[0,t_{0}]$). By Proposition \ref{prop: Flow and Jacobian} we obtain that $\mathcal{V}^{e}_{t_{0}t}=J_{t\leftarrow t_{0}}(\overline{x}_{t_{0}+\beta})\cdot e$ satisfies
\[\mathcal{V}_{t_{0},t}^{e}=e+\int_{t_{0}}^{t}\lp \int_{U}Db(r,x_{r}^{\beta},a)\gamma_{r}^{\beta}(da) \rp \mathcal{V}_{t_{0},r}^{e}\,dr +\int_{t_{0}}^{t}D\sigma(r,x_{r}^{\beta})\mathcal{V}_{t_{0},r}^{e}\,d\zeta_{r}.\]
We now wish to examine the differentiability of the application $\beta\mapsto x^{\beta}$. We will divide this analysis in several steps.

\smallskip
\noindent
\textit{Step 1: Reduction of the problem.}
One way to prove the differentiability of $\beta\mapsto x^{\beta}$ is to proceed by Picard iterations. Namely, we set $x_{t}^{\beta,0}=y$. Then, we iteratively define $x^{\beta,k+1}=\Gamma(x^{\beta,k})$, with $\Gamma$ given as an application
\begin{equation*}
  \Gamma\colon \mathcal{Q}_{\zeta}^{\ka}(\R^{m})  \longrightarrow \mathcal{Q}_{\zeta}^{\ka}(\R^m) ,
\quad\text{such that}\quad
  w  \longmapsto \Gamma(w) = z \, ,
\end{equation*}
 where $z$ is defined by
 \[z_{t}=y+\int_{0}^{t}\lp \int_{U}b(r,w_{r},a)\ga_{r}^{\beta}(da)\rp\,dr+\int_{0}^{t}\sigma(r,w_{r})\,d \zeta_{r}.\]
 With this notation in hand, a possible strategy to get differentiability results in \cite[Chapter 11]{friz-victoir}  is the following:
   \begin{enumerate}[wide, labelwidth=!, labelindent=0pt, label=\textnormal{(\arabic*)}]
   \setlength\itemsep{.05in}
       \item It is well known that the Picard scheme $x^{\beta,k}$ is convergent in $\mathcal{Q}_{\zeta}^{\kappa}(\R^m)$. It converges to the solution $x^{\beta}$ of (\ref{eq: dynamics-beta-0}).
       
       \item One can also prove differentiability by induction. Namely, if one assumes that $\beta \mapsto x^{\beta,k}$ is differentiable, the problem boils down to establish differentiability of
       \begin{equation}\label{c1}
 \beta\in [0,T]  \longmapsto \Gamma(x^{\beta,k})\in\mathcal{Q}_{\zeta}^{\ka}(\R^m).
\end{equation}
In particular, it will be easy to show that $x^{\beta,k}|_{[0,t_{0}+\beta]}=x^{\beta+h,k}|_{[0,t_{0}+\beta]}$.

\item\label{it:differentiability-3} 
Once we have obtained differentiability in \eqref{c1}, together with the uniform (in $k$) bounds for the derivatives, some compactness arguments allow to take limits and prove that $x^{\beta}$ in ~(\ref{eq: dynamics-beta-0}) is differentiable.
\end{enumerate}
For sake of conciseness, we will refrain from implementing the whole strategy summarized above. We will content ourselves with giving some details about differentiability in the main step~\eqref{c1}. Moreover, we shall only focus on right derivatives in $\beta$, which are sufficient for our purposes.

Summarizing our considerations so far, our differentiation problem is now reduced to the following: Assume $\beta \mapsto x^{\beta}$ is right differentiable at $\beta$. Namely, we assume that the following limit exists in $\mathcal{Q}_{\zeta}^{\kappa}$:
\begin{equation}\label{eq: Derivative of xbeta}
    X^{\beta}:=\lim_{h\downarrow 0}\frac{x^{\beta+h}-x^{\beta}}{h}.
\end{equation}
 Next, set $z^{\beta}=\Gamma(x^{\beta})$. We wish to prove that $z^{\beta}$ is also differentiable as an element of $\mathcal{Q}_{\zeta}^{\ka}$. In addition, we want to derive an equation like (\ref{eq: RDE for derivative}) and estimates such as (\ref{Ineq: derivative_of_dynamics}) for $Z^{\beta}\equiv \partial_{\beta}z^{\beta}$. The remainder of the proof will focus on this step.

\smallskip
\noindent
\textit{Step 2: Time decomposition of the dynamics.} 
In order to get an expression for $Z^{\beta}$ defined as above, we will write the equation for $z^{\beta}$ in a more enlightening way (according to the decomposition of $\gamma^{\beta}$). Specifically, we divide the interval $[0,T]$ in three  pieces:

\begin{enumerate}[wide, labelwidth=!, labelindent=0pt, label=\textnormal{(\alph*)}]
\setlength\itemsep{.05in} 

\item \label{case: t<t_{0}}
For $t<t_{0}$, the variation on $\overline{\ga}$ has not occurred yet. Specifically, going back to (\ref{spike}), we have $\gamma^{\beta}_{t}=\overline{\gamma}_{t}$ regardless of $\beta$. Hence we have 
\begin{equation}\label{eq: dynamics-equiv}
z_{t}^{\beta}={u}_{t}, 
\quad \text{for all} \quad 
t\in [0,t_{0}],
\end{equation}
where $u_{t}=\Gamma(x^{\beta}_{t})$ is independent of $\beta$.

\item
Next, for $t\in[t_{0},t_{0}+\beta]$, the dynamics (\ref{eq: dynamics-beta-0}) can be read as
\begin{equation}\label{eq:decomp-dynamics}
    z_{t}^{\beta}={u}_{t_{0}}+\int_{t_{0}}^{t}\lp\int_{U}b(r,x_{r}^{\beta},a)\mu(da) \rp\,dr +\int_{t_{0}}^{t}\sigma(r,x_{r}^{\beta})\,d\zeta_{r} .
\end{equation}
Moreover, for $h>0$ the path $y^{\beta+h}$ also follows the dynamics (\ref{eq:decomp-dynamics}) on $[t_{0},t_{0}+\beta]$.

\item
Eventually, for $t\geq t_{0}+\beta$, equation (\ref{eq: dynamics-beta-0}) becomes
\begin{multline}\label{eq:decomp-dynamics-2}
z_{t}^{\beta}={u}_{t_{0}}+\int_{t_{0}}^{t_{0}+\beta}\lp \int_{U}b(r,x_{r}^{\beta},a)\mu(da) \rp\,dr \\
+\int_{t_{0}+\beta}^{t}\lp \int_{U}b(r,x_{r}^{\beta},a)  \overline{\ga}_{r}(da) \rp \,dr 
+ \int_{t_{0}}^{t}\sigma(r,x_{r}^{\beta})\,d\zeta_{r}.
\end{multline}
\end{enumerate}
We are now ready to derive an expression for $Z^{\beta}=\partial_{\beta}z^{\beta}$, following the decompositions $(\ref{eq: dynamics-equiv})$-$(\ref{eq:decomp-dynamics})$-$(\ref{eq:decomp-dynamics-2})$ . We deal with the previous three cases separately.

\begin{enumerate}[wide, labelwidth=!, labelindent=0pt, label=\emph{(\roman*)}]
\setlength\itemsep{.1in}     

\item \emph{Case $t\in [0,t_{0})$.}
    Earlier, we saw in (\ref{eq: dynamics-equiv}) that $z^{\beta}_{t}={u}_{t}$. Similarly, we have $z^{\beta +h}_{t}={u}_{t}$. Therefore, we conclude that $Z^{\beta}_{t}\equiv 0$.
    
    \item \emph{Case $t\in [t_{0},t_{0}+\beta]$.}
Looking at (\ref{eq:decomp-dynamics}) and observing that $z^{\beta+h}_{t}$ also satisfies (\ref{eq:decomp-dynamics}), we conclude that $z^{\beta}\equiv z^{\beta+h}$. Ultimately, we yet again have $Z^{\beta}_{t}=0$.
    
    \item \emph{Case $t\in (t_{0}+\beta,T]$.}
   Invoking the dynamics of $x^{\beta}$ and $x^{\beta+h}$, and using the observation that $x^{\beta+h}_{t_{0}+\beta}=x^{\beta}_{t_{0}+\beta}$, we obtain:
   \begin{multline*}
       \frac{z_{t}^{\beta+h}-z^{\beta}_{t}}{h}
       =\frac{1}{h}\int_{t_{0}+\beta}^{t}\int_{U}b(r,x_{r}^{\beta+h},a)\ga_{r}^{\beta+h}(da)\,dr 
       -\frac{1}{h}\int_{t_{0}+\beta}^{t}\int_Ub(r,x_{r}^{\beta},a)\ga_{r}^{\beta}(da)\,dr\\
       +\frac{1}{h}\int_{t_{0}+\beta}^{t}\{\sigma(x_{r}^{\
       \beta+h})-\sigma(r,x_{r}^{\beta})\}\,d\zeta_{r}.
\end{multline*}
Therefore we get the following decomposition for the increments of $z^{\beta}$:
\begin{equation}
    \frac{z^{\beta+h}_{t}-z^{\beta}_{t}}{h}=I_{h}+II_{h}+III_{h},
\label{eq: decomp of y}
\end{equation}
where the quantity $I_{h}$ is given by
\begin{equation}\label{eq: Ih}
 I_{h} 
 =\frac{1}{h}\int_{t_{0}+\beta}^{t}\int_{U}\{b(r,x_{r}^{\beta+h},a)-b(r,x_{r}^{\beta},a)\}\ga_{r}^{\beta}(da)\,dr ,
\end{equation}
and where $II_{h}$ and $III_{h}$ are respectively defined by
\begin{align}
  \label{eq: IIh} II_{h}&=\frac{1}{h}\int_{t_{0}+\beta}^{t}\int_{U}b(r,x_{r}^{\beta+h},a)\{\ga^{\beta+h}_{r}-\ga^{\beta}_{r}\}(da)\, dr\\
     \label{eq: IIIh}  III_{h}&=\frac{1}{h} \int_{t_{0}+\beta}^{t}\{\sigma(r,x_{r}^{\beta+h})-\sigma(r,x_{r}^{\beta})\}\,d\zeta_{r}
\end{align}
We shall derive the limits in those three terms separately.

\smallskip
\noindent
\textit{Step 3: Limit for $I_{h}$.} 
Let us deal with the term $I_{h}$ in (\ref{eq: Ih}) first. This is the easiest of the terms in~\eqref{eq: decomp of y}. Indeed, since we have assumed $x^{\beta}$ is differentiable in~(\ref{eq: Derivative of xbeta}), one can easily check that
\begin{equation}
\lim_{h\downarrow0}I_{h}=\int_{t_{0}+\beta}^{t}\lp\int_{U}Db(r,x_{r}^{\beta},a)\ga_{r}^{\beta}(da) \rp X_{r}^{\beta}\,dr.
\label{eq: limit Ih}\end{equation}

\smallskip
\noindent
\textit{Step 4: Limit for $III_{h}$.} 
We now handle the term $III_{h}$, that we wish to differentiate inside the rough integral in~\eqref{eq: IIIh}. We claim that 
\begin{equation}\label{eq: Integral_3}
    \lim_{h\to 0}III_{h}=\int_{t_{0}+\beta}^{t} D\sigma(r,x_{r}^{\beta})X_{r}^{\beta}\,d\zeta_{r} .
\end{equation}
In order to prove (\ref{eq: Integral_3}), let us give a more explicit expression for the rough integral. Namely (see \cite{gubinelli} for details) for $x^{\beta}$, the solution to (\ref{eq: dynamics-beta-0}), the strongly controlled decomposition for the rough integral can be written
\begin{equation}\label{eq: Rough_Exp-Int}
    \int_{u}^{v}\sigma(r,x_{r}^{\beta})\, d\zeta_{r}=\sigma(u,x_{u}^{\beta})\delta \zeta_{uv}+ D\sigma(u,x_{u}^{\beta})\sigma(u,x_{u}^{\beta})\zeta_{uv}^{2}+ \Lambda(\ell^{\beta})_{uv},
\end{equation}
where $\Lambda$ is the sewing map introduced in Proposition \ref{prop:La} and $\ell^{\beta}$ is an increment in $\mathcal{C}_{3}^{3\ka}$ defined by
\begin{equation}\label{eq: ell-beta}
    \ell^{\beta}=\delta\lp \sigma(\cdot,x^{\beta})\delta \zeta +D\sigma(\cdot,x^{\beta})\sigma(\cdot,x^{\beta})\zeta^{2}\rp.
\end{equation}
Now if $x^{\beta}$ is differentiable in $\beta$, it is clear that the first two terms in the right hand side of~(\ref{eq: Rough_Exp-Int}) are also differentiable. The term $\ell^{\beta}$ as given by (\ref{eq: ell-beta}) is also clearly differentiable. In order to see that $\Lambda(\ell^{\beta})$ is differentiable, we use 
(\ref{eq: ell-beta}) in Proposition \ref{prop:La} to assert
\begin{align*}
   \left\| \Lambda(\ell^{\beta+h})- \Lambda(\ell^{\beta})-h\Lambda(\partial_{\beta}\ell^{\beta})\right\|_{3\kappa}
   &=
   \left\| \Lambda(\ell^{\beta + h} -\ell^{\beta}-h\partial_{\beta}\ell^{\beta}) \right\|_{3\kappa} \\ 
   &\lesssim 
   \left\|\ell^{\beta+h}-\ell^{\beta}-h\partial_{\beta}\ell^{\beta}\right\|_{3\kappa}=o(h) \, ,
\end{align*}
where the last identity stems from the fact that $\ell^{\beta}$ is differentiable with respect to $\beta$. This argument implies that $\partial_{\beta}\Lambda(\ell^{\beta})=\Lambda(\partial_{\beta}\ell^{\beta})$. Plugging this identity back into (\ref{eq: Rough_Exp-Int}), we have proved that (\ref{eq: Integral_3}) holds true.

\smallskip
\noindent
\textit{Step 5: Limit for $II_{h}$.} 
Let us now turn to the analysis of the term $II_{h}$ in (\ref{eq: IIh}). Owing to the expressions $\ga^{\beta}$ and $\ga^{\beta+h}$, it is readily checked that 
\[II_{h}=\frac{1}{h}\int_{t_{0}+\beta}^{(t_{0}+\beta+h)\wedge t}\int_{U}b(r,x_{r}^{\beta+h},a)\{\mu-\overline{\ga}_{r}\}(da)\,dr.\]
We will further decompose this expression as 
\begin{equation}\label{eq:IIh limit}
    II_{h}=II_{h}^{(1)}+II_{h}^{(2)},
\end{equation}
where $II_{h}^{(1)}$, $II_{h}^{(2)}$ are respectively defined by
\begin{align}
    II_{h}^{(1)}&=\frac{1}{h}\int_{t_{0}+\beta}^{(t_{0}+\beta+h)\wedge t}\int_{U}\{b(r,x_{r}^{\beta+h},a)-b(r,x_{r}^{\beta},a)\}\{\mu-\overline{\ga}_{r}\}(da)\,dr\\
    II_{h}^{(2)}&=\frac{1}{h}\int_{t_{0}+\beta}^{(t_{0}+\beta+h)\wedge t}\int_{U}b(r,x_{r}^{\beta},a)\{\mu-\overline{\ga}_{r}\}(da)\,dr.
\end{align}
Moreover, if $\beta\mapsto x^{\beta}$ is differentiable and $b$ is smooth, we have
\[|b(r,x_{r}^{\beta+h},a)-b(r,x_{r}^{\beta},a)|\lesssim|h|,\]
uniformly in $(r,a)$. Therefore, combining this regularity with the regularity given by the time integral in $II_{h}^{(1)}$, we trivially  get
\begin{equation}\label{eq: II-1h limit}
    \lim_{h\downarrow 0}II_{h}^{(1)}=0.
\end{equation}
As far as $II_{h}^{(2)}$ is concerned, one can apply the fundamental theorem of calculus and obtain
\begin{equation}\label{eq: II-2h limit}
    \lim_{h\downarrow 0}II_{h}^{(2)}= \int_{U}b(t_{0}+\beta,x^{\beta}_{t_{0}+\beta},a)\mu(da)-\int_{U}b(t_{0}+\beta,x^{\beta}_{t_{0}+\beta},a)\overline{\ga}_{t_{0}+\beta}(da).
\end{equation}
Gathering (\ref{eq: II-1h limit}) and (\ref{eq: II-2h limit}) into (\ref{eq:IIh limit}), we end up with
\begin{equation}\label{eq: limit IIh}
    \lim_{h\downarrow 0} II_{h}=\int_{U}b(t_{0}+\beta,x^{\beta}_{t_{0}+\beta},a)\mu(da)-\int_{U}b(t_{0}+\beta,x^{\beta}_{t_{0}+\beta},a)\overline{\ga}_{t_{0}+\beta}(da).
\end{equation}

\smallskip
\noindent
\textit{Step 6: Proof of differentiability.} \label{lem 3.8: step 6}
We can now summarize and obtain an expression for $Z^{\beta}=\partial_{\beta}z^{\beta}$. Namely putting together (\ref{eq: limit Ih}), (\ref{eq: Integral_3}) and (\ref{eq: limit IIh}), and recalling our decomposition (\ref{eq: decomp of y}), we obtain that $Z^{\beta}$ satisfies

\begin{multline}\label{eq: Equation-Derivative 1}
Z^{\beta}_{t}=\int_{t_{0}+\beta}^{t}\lp\int_{U}Db(r,x_{r}^{\beta},a)\ga_{r}^{\beta}(da) \rp X_{r}^{\beta}\,dr + \int_{t_{0}+\beta}^{t}D\sigma(r,x_{r}^{\beta})X_{r}^{\beta}\,d\zeta_{r}\\
    \hspace{0.25cm}+ \int_{U}b(t_{0}+\beta,x^{\beta}_{t_{0}+\beta},a)\mu(da)-\int_{U}b(t_{0}+\beta,x^{\beta}_{t_{0}+\beta},a)\overline{\ga}_{t_{0}+\beta}(da).
\end{multline}
Here again, it should be mentioned that the limits in $h$ have to be understood in the rough paths sense.
\end{enumerate}

Recall that in point \ref{it:differentiability-3} of our strategy for differentiation, we took limits in Picard iterations related to~(\ref{eq: Equation-Derivative 1}). Skipping the tedious details for the justification of this step, we obtain that if $x^{\beta}$ is the solution of (\ref{eq: dynamics-beta-0}), and if we set $V^{\beta}=\partial_{\beta}x^{\beta}$, then $V^{\beta}$ solves the following linear rough differential equation on $[t_{0}+\beta,T]$:
\begin{multline}\label{eq: Equation-Derivative 2}
V^{\beta}_{t}=\int_{t_{0}+\beta}^{t}\lp\int_{U}Db(r,x_{r}^{\beta},a)\ga_{r}^{\beta}(da) \rp V_{r}^{\beta}\,dr + \int_{t_{0}+\beta}^{t}D\sigma(r,x_{r}^{\beta})V_{r}^{\beta}\,d\zeta_{r}\\
    \hspace{0.25cm}+ \int_{U}b(t_{0}+\beta,x^{\beta}_{t_{0}+\beta},a)\mu(da)-\int_{U}b(t_{0}+\beta,x^{\beta}_{t_{0}+\beta},a)\overline{\ga}_{t_{0}+\beta}(da).
\end{multline}
We have thus proved relation (\ref{eq: RDE for derivative}).

\smallskip
\noindent
\textit{Step 7: Bounds on the derivative.} 
We now turn to the proof of inequality (\ref{Ineq: derivative_of_dynamics}). Here note that we are looking for a bound for the whole norm $\mathcal{N}\left[ V^{\beta};\mathcal{Q}_{\zeta}^{\kappa} \right]$, as defined in (\ref{eq:norm-controlled}). For the sake of conciseness, we will just bound $\mathcal{N}[V^{\beta}; \mathcal{C}_{1}^{\infty}(\R^m)]$, leaving the other lengthy computations to the reader. Now in order to estimate the sup norm of $V^{\beta}$, we resort to the fact that $V^{\beta}\equiv 0$ on $[0,t_{0}+\beta)$. Furthermore, on $[t_{0}+\beta,T]$, the path $V^{\beta}$ satisfies the same equation as $J^{\zeta}_{t\leftarrow t_{0}+\beta}(y)\cdot \hat{e}$, where $J$ stands for the Jacobian in Proposition \ref{prop: Flow and Jacobian} and $\hat{e}$ is given by
\begin{equation}
   \hat{e}= \int_{U}b(t_{0}+\beta,x^{\beta}_{t_{0}+\beta},a)\mu(da)-\int_{U}b(t_{0}+\beta,x^{\beta}_{t_{0}+\beta},a)\ga_{t_{0}+\beta}(da) .
\end{equation}
Therefore, invoking the estimate \eqref{ineq: Jacob} on the Jacobian and the fact that $b$ (and thus $\hat{e}$) is uniformly bounded, we obtain
\begin{multline*}
   \mathcal{N}[V^{\beta};\mathcal{C}_{1}^{0}([0,T])]=\sup_{t\in[t_{0}+\beta,T]}|V^{\beta}_{t}|
    =\sup_{t\in[t_{0}+\beta,T]}|J_{t\leftarrow t_{0}+\beta}^{\zeta}(y)\cdot \hat{e}|\\
   \leq C_{b}\sup_{t\in[t_{0}+\beta,t]}\|J_{t\leftarrow t_{0}+\beta}(x^{\beta}_{t_{0}+\beta})\|_{\mathrm{op}}
    \leq C\exp\lp CT\|\zeta\|_{\alpha}^{1/\alpha} \rp,
    \end{multline*} 
    which proves
    \begin{equation}\|V^{\beta}\|_{\infty;[0,T]}\leq C\exp \lp C T \|\zeta\|_{\alpha}^{1/\alpha} \rp.
\label{eq: uniform-estimate}    \end{equation}
Note again that we have focused on the uniform norm in (\ref{eq: uniform-estimate}), letting the reader complete the other estimates leading to (\ref{Ineq: derivative_of_dynamics}).

Eventually, observing that $x^{0}\equiv \overline{x}$, we get that 
\begin{equation}\label{eq: FTC x}x^{\beta}_{t}-\overline{x}_{t}=\int_{0}^{\beta}V_{t}^{u} \, du.\end{equation}
Hence, relation (\ref{eq: uniform-estimate}) directly yields (\ref{eq: Pontry-1}). This finishes our proof.\end{proof}

\subsection{Identification of a derivative process}\label{sec:identification-derivative}
In this section 
we first introduce the so called variational equation, which will aid us in convenient cancellations later on to obtain the Pontryagin maximal principle. We will then study some basic properties of the solution $Y^{\beta}$ to the variational equation and obtain a Taylor type formula. Those considerations are still preliminary steps towards our Pontryagin principle.

\begin{definition} \label{def: variational equation}
 Suppose that the assumptions of Theorem \ref{thm:Optimal Pair} prevail. Let $(\overline{x},\overline{\gamma})$ be an optimal pair according to Theorem \ref{thm:Optimal Pair}. Also, let $I$ be the interval and $\mu$ the measure which were both defined in Definition \ref{def:spike variation}. We define a path $Y^{\beta}\colon[0,T]\to\R^m$ by $Y_{t}^{\beta}=0$ for $t<t_{0}$, and for $t\geq t_{0}$, $Y^{\beta}$ solves the rough differential equation
  \begin{equation}\label{eq: Y}
Y_{t}^{\beta}=L_{t}^{\beta}+\int_{t_0}^{t}\lp\int_{U}Db(r,\overline{x}_{r},a)\overline{\ga}_{r}(da) \rp Y_{r}^{\beta}dr + \int_{t_{0}}^{t}D\sigma(r,\overline{x}_{r})Y_{r}^{\beta}\,d\zeta_{r},
 \end{equation}
 where $L^{\beta}$ is defined by
 \[L_{t}^{\beta}=\int_{t_{0}}^{t} \left\{ \int_{U}b(r,\overline{x}_{r},a)\mu(da)-\int_{U}b(r,\overline{x}_{r},a)\overline{\gamma}_{r}(da) \right\}\mathbf{1}_{I}(r)dr.\]
 Equation (\ref{eq: Y}) is called the variational equation along the optimal pair.
\end{definition}

Next we compute another derivative, namely we differentiate the process $Y^{\beta}$ introduced in Definition \ref{def: variational equation}.

\begin{lemma}\label{Lemma: Y} We work under the same assumptions of Definition \ref{def: variational equation} and Proposition \ref{prop: Flow and Jacobian}. For $\beta>0$, let $Y^{\beta}$ be the unique solution of the rough differential equation (\ref{eq: Y}). Also, recall from Definition \ref{def:spike variation} that $(\overline{x},\overline{\ga})$ is an optimal pair. Then $\beta \mapsto Y^{\beta}$ is right differentiable, where $Y^{\beta}$ has to be understood as an element of $\mathcal{Q}_{\zeta}^{\ka}(\R^m)$. Moreover, the right derivative $W^{\beta}:=\partial_{\beta}Y^{\beta}$ satisfies the following linear rough differential equation: For $t<t_{0}+\beta$ we have $W_{t}^{\beta}=0$, and for $t\geq t_{0}+\beta$ the path $W^{\beta}$ solves
\begin{equation}\label{eq: dynamics_W}
    W^{\beta}_{t}=h^{\beta}+\int_{t_{0}+\beta}^{t}\lp\int_{U}Db(r,\overline{x}_{r},a)\overline{\ga}_{r}(da) \rp W_{r}^{\beta}\,dr +\int_{t_{0}+\beta}^{t}D\sigma(r,\overline{x}_{r})W^{\beta}_{r}\, d\zeta_{r},
\end{equation}
where the quantity $h^{\beta}$ is given by
\begin{equation}
    \label{eq: h_beta}h^{\beta}=\int_{U}b(t_{0}+\beta,\overline{x}_{t_{0}+\beta},a)\mu(da)-\int_{U}b(t_{0}+\beta,\overline{x}_{t_{0}+\beta},a)\overline{\ga}_{t_{0}+\beta}(da).
\end{equation}
In addition, the following estimate holds true:
\begin{equation}\label{eq: estimate_W}
    \mathcal{N}[W^{\beta};\mathcal{Q}_{{\zeta}}^{\zeta}([0,T];\R^m)]\leq C\exp\lp C \|\zeta\|_{\alpha}^{1/\alpha} \rp.
\end{equation}
In particular, one has
\begin{equation}\label{eq: Pontry-2}
\|Y^{\beta}\|_{\infty,[0,T]}=O(\beta). 
\end{equation}
\end{lemma}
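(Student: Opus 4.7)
The plan is to follow the same overall strategy as in Lemma \ref{Lemma: X}, but with a significant simplification: in equation (\ref{eq: Y}) the $\beta$-dependence enters only through the forcing term $L^\beta$, since the linearized drift and diffusion coefficients depend only on the optimal pair $(\overline{x},\overline{\gamma})$ and not on $(x^\beta,\gamma^\beta)$. Because of this, no full Picard scheme is required to differentiate the coefficients in $\beta$, and a direct subtraction suffices.

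First, I would decompose $[0,T]$ into three pieces: $[0,t_0]$, $[t_0, t_0+\beta]$, and $[t_0+\beta,T]$. On $[0,t_0]$ both $Y^\beta$ and $Y^{\beta+h}$ vanish by definition. On $[t_0, t_0+\beta]$ (with $h>0$), the indicator $\mathbf{1}_I$ appearing in $L^\beta$ and $L^{\beta+h}$ equals $1$ on the whole subinterval, so $L_t^{\beta+h} = L_t^\beta$ for all $t \leq t_0+\beta$. Since the corresponding RDEs in (\ref{eq: Y}) share the same initial value at $t_0$ and the same coefficients, uniqueness of linear RDEs (in the spirit of Proposition \ref{ExistUniq2}) yields $Y_t^{\beta+h} = Y_t^\beta$ for $t \leq t_0+\beta$. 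This already gives $W_t^\beta \equiv 0$ on $[0, t_0+\beta)$.

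For $t \in [t_0+\beta, T]$ I would form the difference
\[
Y_t^{\beta+h} - Y_t^\beta = (L_t^{\beta+h} - L_t^\beta) + \int_{t_0+\beta}^t \!\Big( \int_U Db(r, \overline{x}_r, a)\overline{\gamma}_r(da) \Big)(Y_r^{\beta+h} - Y_r^\beta) dr + \int_{t_0+\beta}^t D\sigma(r, \overline{x}_r)(Y_r^{\beta+h} - Y_r^\beta) d\zeta_r,
\]
where the integrals start at $t_0+\beta$ thanks to the cancellation established above. Dividing by $h$ and sending $h \downarrow 0$, the only nontrivial new input is the forcing limit
\[
\lim_{h \downarrow 0} \frac{L_t^{\beta+h} - L_t^\beta}{h} = \lim_{h \downarrow 0} \frac{1}{h} \int_{t_0+\beta}^{t_0+\beta+h} \Big( \int_U b(r, \overline{x}_r, a)\mu(da) - \int_U b(r, \overline{x}_r, a)\overline{\gamma}_r(da) \Big) dr,
\]
which equals $h^\beta$ from (\ref{eq: h_beta}) by the fundamental theorem of calculus; here I would use that $r \mapsto \overline{x}_r$ is continuous and that $\overline{\gamma} \in \mathcal{V}^{\varepsilon, L}$ (from Theorem \ref{thm:Optimal Pair}), which makes the integrand continuous in $r$. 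Passing to the limit inside the rough integral is handled exactly as in Step 4 of Lemma \ref{Lemma: X}: one expands the integral via the sewing map (Proposition \ref{prop:La} and Proposition \ref{prop:integral_controlled}) and checks that each term depends continuously on $Y^\beta$ in the controlled path topology.

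Equation (\ref{eq: dynamics_W}) then follows, and I obtain (\ref{eq: estimate_W}) by observing that on $[t_0+\beta, T]$ the path $W^\beta$ solves a linear RDE whose coefficients are identical in structure to the Jacobian equation (\ref{eq: Jacobian-RDE}) along $(\overline{x}, \overline{\gamma})$; more precisely, $W_t^\beta = J_{t \leftarrow t_0+\beta}^\zeta(\overline{x}_{t_0+\beta}) \cdot h^\beta$, where $J$ is the Jacobian of the optimal flow. Since $b$ is bounded by Hypothesis \ref{hyp:coeff}, the initial kick $h^\beta$ is bounded by $2\|b\|_\infty$, and the Jacobian estimate (\ref{ineq: Jacob}) immediately yields (\ref{eq: estimate_W}); the remaining controlled-path seminorms are estimated by the same template, left to the reader as in the previous lemma. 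Finally, (\ref{eq: Pontry-2}) is immediate: $Y^0 \equiv 0$ (because $L^0 \equiv 0$), so $Y_t^\beta = \int_0^\beta W_t^u\, du$, and (\ref{eq: estimate_W}) gives $\|Y^\beta\|_\infty \leq C\beta$. The main technical obstacle is the rigorous justification of the rough-integral limit, but this is identical to the argument already carried out in Lemma \ref{Lemma: X} and can be imported verbatim.
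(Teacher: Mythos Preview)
Your proposal is correct and follows essentially the same route as the paper: time decomposition, difference quotient, fundamental theorem of calculus for the forcing, Jacobian estimate for the bound, and the integral representation $Y^\beta_t=\int_0^\beta W^u_t\,du$ for \eqref{eq: Pontry-2}. The one genuine difference is that you exploit the fact that the $\beta$-dependence enters only through $L^\beta$ (the linearized coefficients depend on $(\overline{x},\overline{\gamma})$ alone), which lets you subtract $Y^{\beta+h}-Y^\beta$ directly without the Picard-iteration scaffolding the paper imports from Lemma~\ref{Lemma: X}; this is a legitimate and slightly cleaner simplification, and the paper's own proof could have taken it as well.
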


\begin{proof}
For a generic $\beta>0$, recall that $Y^{\beta}$ solves equation (\ref{eq: Y}). In particular $Y^{\beta}_{t}=0$ for $t\leq t_{0}$. In order to compare $Y^{\beta+h}$ with $Y^{\beta}$, we will have to consider the flows from $t_{0}+\beta$  to $t$. Namely we define the flow map $Y_{t}^{\beta}=U_{t\leftarrow t_{0}+\beta}^{\zeta}(Y_{t_{0}+\beta}^{\beta})$. By Proposition \ref{prop: Flow and Jacobian} we obtain that $\mathcal{V}_{t_{0}+\beta,t}^{e}=J_{t\leftarrow t_{0}+\beta}^{\zeta}(Y^{\beta}_{t_{0}+\beta})\cdot e$ satisfies
   \begin{equation}\label{eq: Jacobian for Y}\mathcal{V}_{t_{0}+\beta,t}^{e}=e+\int_{t_{0}+\beta}^{t} \lp \int_{U}Db(r,\overline{x}_{r},a)\overline{\ga}_{r}(da)\rp \mathcal{V}_{t_{0}+\beta,r}^{e}\,dr +\int_{t_{0}+\beta}^{t}D\sigma(r,\overline{x}_{r})\mathcal{V}_{t_{0}+\beta,r}^{e}\,d\zeta_{r}. \end{equation}
Proceeding in the same manner as in Lemma  \ref{Lemma: X}, we study the right differentiability of $\beta \mapsto Y^{\beta}$. In order to avoid unnecessary repetitions, we will skip some of the details of computation.

\smallskip
\noindent
\textit{Step 1: Reduction of the problem.} Similarly to Lemma \ref{Lemma: X}, we define
\begin{equation*}
    \Gamma \colon \mathcal{Q}_{\zeta}^{\ka}(\R^m) \longrightarrow \mathcal{Q}_{\zeta}^{\ka}(\R^m), \quad \text{such that}\quad w\longmapsto \Gamma(w)=z,
\end{equation*}
where $z$ is defined by
\begin{multline*}
    z_{t}=\int_{0}^{t}\lp \int_{U}Db(r,\overline{x}_{r},a)\overline{\ga}_{r}(da)\rp w_{r}\,dr +\int_{0}^{t}D\sigma(r,\overline{x}_{r},a)w_{r}\,d\zeta_{r}\\+
    \int_{0}^{t}\left\{\int_{U}b(r,\overline{x}_{r},a)\ga_{r}^{\beta}(da)-\int_{U}b(r,\overline{x}_{r},a)\overline{\ga}_{r}(da) \right\}\,dr.
\end{multline*}
Note that with respect to (\ref{eq:decomp-dynamics-2}), we removed the $\mathbf{1}_{I}(r)$ and encapsulated it in $\ga^{\beta}_{r}$.
We follow the same strategy stated in Lemma \ref{Lemma: X}. Summarizing our considerations, our differentiation problem is now reduced to the following: Assume $\beta \mapsto w^{\beta}$ is right differentiable at $\beta$. Namely, we assume that the following limit exists in $\mathcal{Q}_{\zeta}^{\ka}$:
\begin{equation}
    \tilde{W}^{\beta}=\lim_{h\downarrow 0}\frac{w^{\beta+h}-w^{\beta}}{h}.
\end{equation}
Next, we set $z^{\beta}=\Gamma(w^{\beta})$. We wish to prove that $z^{\beta}$ is also differentiable as an element of $\mathcal{Q}_{\zeta}^{\ka}$. In addition, we want to derive an equation like (\ref{eq: dynamics_W}) and an estimate such as (\ref{eq: estimate_W}) for $Z^{\beta}\equiv \partial_{\beta}z^{\beta}$. The remainder of the proof will focus on this step.

\smallskip
\noindent
\textit{Step 2: Time decomposition of the dynamics.} Exactly like in the proof of Lemma \ref{Lemma: X}-Step 2, it is beneficial to decompose the dynamics of $z^{\beta}$ in different time intervals. However, it can be shown like in (\ref{eq:decomp-dynamics})-(\ref{eq:decomp-dynamics-2}) that the dynamics on $[0,t_{0}+\beta]$ gives rise to a trivial derivative. We will thus focus our attention on the case $t\in(t_{0}+\beta,T]$. 
 Namely, for $t\in (t_{0}+\beta,T]$, we notice that $z^{\beta}$ can be expressed as
\begin{multline}\label{eq: dynamics_z_3}
    z_{t}^{\beta}=\int_{0}^{t}\lp \int_{U}Db(r,\overline{x}_{r},a)\overline{\ga}_{r}(da)\rp w_{r}^{\beta}\,dr +\int_{0}^{t}D\sigma(r,\overline{x}_{r})w_{r}^{\beta}\,d\zeta_{r}\\ +
    \int_{t_{0}}^{t_{0}+\beta}\left\{\int_{U}b(r,\overline{x}_{r},a)\mu(da)-\int_{U}b(r,\overline{x}_{r},a)\overline{\ga}_{r}(da) \right\}\,dr.
\end{multline}
We are now ready to derive an expression for $Z^{\beta}=\partial_{\beta}z_{\beta}$ on $(t_{0}+\beta,T]$, following the decomposition (\ref{eq: dynamics_z_3}). 
\smallskip
\noindent
 That is, invoking the dynamics of $z^{\beta}$ and $z^{\beta+h}$, and using the observation that $z^{\beta+h}_{t_{0}+\beta}=z_{t_{0}+\beta}^{\beta}$, we obtain:
\begin{multline}
    \frac{z^{\beta+h}_{t}-z^{\beta}_{t}}{h}=\int_{t_{0}+\beta}^{t}\lp\int_{U}Db(r,\overline{x}_{r},a)\overline{\ga}_{r}(da) \rp \frac{w_{r}^{\beta +h}-w_{r}^{\beta}}{h}dr +\int_{t_{0}+\beta}^{t}D\sigma(r,\overline{x}_{r})\frac{w_{r}^{\beta+h}-w_{r}^{\beta}}{h}d\zeta_{r}\\
    +\frac{1}{h}\int_{t_{0}+\beta}^{t_{0}+\beta+h}\left\{\int_{U}b(r,\overline{x}_{r},a)\mu(da)-\int_{U}b(r,\overline{x}_{r},a)\overline{\ga}_{r}(da) \right\}dr.
\end{multline}
Arguing in a similar fashion as Step 6 of case \ref{lem 3.8: step 6} in Lemma \ref{Lemma: X}, we can take limits as $h\to 0$ and we obtain an equation for $Z^{\beta}$, which satisfies:

\begin{multline}\label{eq: Zbeta}
    Z^{\beta}_{t}=\int_{t_{0}+\beta}^{t}\lp\int_{U}Db(r,\overline{x}_{r},a)\overline{\ga}_{r}(da) \rp \tilde{W}^{\beta}_{r}\,dr +\int_{t_{0}+\beta}^{t}D\sigma(r,\overline{x}_{r})\tilde{W}^{\beta}_{r}\, d\zeta_{r}\\
+\int_{U}b(t_{0}+\beta,\overline{x}_{t_{0}+\beta},a)\mu(da)-\int_{U}b(t_{0}+\beta,\overline{x}_{t_{0}+\beta},a)\overline{\ga}_{t_{0}+\beta}(da).
\end{multline}
Hence, starting from $(\ref{eq: Zbeta})$ and mimicking the argument of Step 6 from Lemma \ref{Lemma: X}, we obtain that if $Y^{\beta}$ is a solution of (\ref{eq: Y}). Moreover if we set $W^{\beta}=\partial_{\beta}Y^{\beta}$, then $W^{\beta}$ solves the following linear rough differential equation
\begin{multline}
W^{\beta}_{t}=\int_{t_{0}+\beta}^{t}\lp\int_{U}Db(r,\overline{x}_{r},a)\overline{\ga}_{r}(da) \rp W_{r}^{\beta}\,dr +\int_{t_{0}+\beta}^{t}D\sigma(r,\overline{x}_{r})W^{\beta}_{r}\, d\zeta_{r}\\
+\int_{U}b(t_{0}+\beta,\overline{x}_{t_{0}+\beta},a)\mu(da)-\int_{U}b(t_{0}+\beta,\overline{x}_{t_{0}+\beta},a)\overline{\ga}_{t_{0}+\beta}(da).
\end{multline}
We have thus proved that $(\ref{eq: dynamics_W})$ holds true.

\smallskip

\noindent
\textit{Step 3: Bounds on Derivatives.}
  We now turn to the proof of inequality (\ref{eq: estimate_W}). Here we should note that we are looking for the bound for the whole norm $\mathcal{N}[W^{\beta}; \mathcal{Q}_{\zeta}^{\ka}]$, as defined in (\ref{eq:norm-controlled}). As in the proof of Lemma \ref{Lemma: X}, we will just bound $\mathcal{N}[W^{\beta};\mathcal{C}_{1}^{\infty}]$. Now in order to estimate the sup norm of $W^{\beta}$, we resort to the fact that $W^{\beta}\equiv 0$ on $[0,t_{0}+\beta]$. Furthermore, on $[t_{0}+\beta,T]$, the path $W^{\beta}$ satisfies the same equation as (\ref{eq: Jacobian for Y}), where $e$ is to be taken as $h^{\beta}$ from (\ref{eq: h_beta}).
    Therefore, invoking estimate (\ref{ineq: Jacob}) on the Jacobian and the fact that $b$ (and thus $e$) is bounded, we obtain the same estimate as (\ref{eq: uniform-estimate}):
\begin{equation}\label{estimate: W}\mathcal{N}[W^{\beta},\mathcal{C}_{1}^{0}([0,T])]\leq C\mathrm{exp}(CT\|\zeta\|_{\alpha}^{1/\alpha}).\end{equation}
Note again that the details allowing to go from (\ref{estimate: W}) to (\ref{eq: estimate_W}) are left to the reader. Eventually, observing that $Y^{0}=0$, the equivalent of (\ref{eq: FTC x}) reads
\[Y^{\beta}_{t}=\int_{0}^{\beta}W^{u}_{t}\,du.\]
Hence, relation (\ref{eq: estimate_W}) directly yields (\ref{eq: Pontry-2}). This finishes our proof.\end{proof}
On the road to prove the Pontryagin maximum principle we need a proposition stating that the path $Y^{\beta}$ is an approximate derivative of $\beta \mapsto x^{\beta}$ (see forthcoming Lemma \ref{prop: Prop Z}). Before we dive into that, 
let us recall a basic definition of distance for probability measures which will be used in the sequel.
\begin{definition}\label{def: FM}
    Let $X$ be a metric space. Let $\mu$ and $\nu$ be two probability measures on $X$. 
    We define the \textit{Fortet-Mourier distance} between these probability measures as
    \begin{equation}\label{eq: FM}
        \mathrm{d_{FM}}(\mu,\nu)=\sup_{f\in \mathcal{M}}\left|\int_{X} f d\mu -\int_{X}fd\nu \right|,
    \end{equation}
    where $\mathcal{M}=\{f:X\to \R: f \text{ is bounded and Lipschitz}\}$.
\end{definition}
\begin{remark}
    \label{rmk: FM} We have the following trivial bound, which will also be referred to in the sequel:
    \begin{equation}\label{est: FM}
        \mathrm{d_{FM}}(\mu,\nu)\leq 2.
    \end{equation}
\end{remark}

As another preliminary step towards identification of $Y^{\beta}$ as an approximate derivative,
we will state variants of known stability results and estimates. The first one will be a H\"older-norm variant of \cite[Proposition 2.6]{allan-cohen}, whose proof is included here in order to exhibit the dependence in $\beta$. For simplicity of exposition, we establish the next result for $\sigma$ depending only on its spatial variable.

\begin{proposition}\label{prop: variant-AC}
   Let $\zeta$ be a rough path satisfying Hypothesis \ref{hyp:zeta}. Let $b\in C_{b}^{0,1,1}([0,T]\times \R^m \times U;\R^m)$ and $\sigma\in C^{3}_{b}(\R^{m};\R^{m,d})$. Let $\eta,\nu\colon[0,T]\to\mathcal{P}(U)$ be two relaxed controls. Also, we consider the solutions $x^{\eta}$ and $x^{\nu}$ of the rough differential equations (\ref{eq:relaxed-rde}) (with $s=0$), each corresponding to the relaxed controls $\eta$ and $\nu$ respectively. Recall from Definition \ref{def:weakly-ctrld} that the Gubinelli derivative of a controlled process $z$ is denoted by $z^{\zeta}$ and its remainder by $\rho^{z}$. Then we have
   \begin{equation}\label{eq: est-beta}
       \|(x^{\eta})^{\zeta}-({x}^{\nu})^{\zeta}\|_{\alpha;[0,T]}+\|\rho^{x^{\eta}}-\rho^{{x}^{\nu}}\|_{2\alpha;[0,T]}\lesssim \lp \int_{0}^{T}\mathrm{d_{FM}}(\eta_{r},\nu_{r})^{\frac{1}{1-2\alpha}}\,dr \rp^{1-2\alpha},
   \end{equation}
   where $\mathrm{d_{FM}}(\cdot,\cdot)$ is the Fortet-Mourier distance from Definition \ref{def: FM}.
\end{proposition}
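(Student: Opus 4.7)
The plan is to view the difference $\Delta := x^\eta - x^\nu$ as an element of $\mathcal{Q}_\zeta^\alpha$ with Gubinelli derivative $\Delta^\zeta = \sigma(x^\eta) - \sigma(x^\nu)$ and remainder $\rho^\Delta = \rho^{x^\eta} - \rho^{x^\nu}$, and to transport the rough-path stability argument of Allan--Cohen to the H\"older scale while carefully tracking the time integrability of the drift discrepancy. Subtracting the RDEs for $x^\eta$ and $x^\nu$ gives
\begin{equation*}
\delta\Delta_{st} = \int_s^t [\sigma(x^\eta_r) - \sigma(x^\nu_r)]\,d\zeta_r + \int_s^t \phi_r\,dr,
\end{equation*}
with drift discrepancy $\phi_r = \phi_r^{(1)} + \phi_r^{(2)}$, where $\phi_r^{(1)} := \int_U [b(r,x^\eta_r,a) - b(r,x^\nu_r,a)]\eta_r(da)$ and $\phi_r^{(2)} := \int_U b(r,x^\nu_r,a)[\eta_r - \nu_r](da)$. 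Hypothesis~\ref{hyp:coeff} gives $|\phi_r^{(1)}| \lesssim |\Delta_r|$, while Definition~\ref{def: FM} together with the boundedness and Lipschitz continuity of $b$ in $a$ yields $|\phi_r^{(2)}| \lesssim \mathrm{d_{FM}}(\eta_r,\nu_r)$.

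The key observation for producing the sharp right-hand side of~\eqref{eq: est-beta} is that the time integral $\int_s^t\phi^{(2)}_r\,dr$ feeds into the remainder $\rho^\Delta$, and its $2\alpha$-H\"older norm is controlled via H\"older's inequality with exponent $p = 1/(1-2\alpha)$, so that $1-1/p = 2\alpha$:
\begin{equation*}
\left|\int_s^t\phi^{(2)}_r\,dr\right| \leq c\,|t-s|^{2\alpha}\left(\int_0^T\mathrm{d_{FM}}(\eta_r,\nu_r)^{1/(1-2\alpha)}\,dr\right)^{1-2\alpha}.
\end{equation*}
This is precisely the target quantity, and the trivial bound $\mathrm{d_{FM}} \leq 2$ from Remark~\ref{rmk: FM} ensures the integrand always sits in $L^{1/(1-2\alpha)}$.

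Next, the rough integral $\int_s^t[\sigma(x^\eta_r) - \sigma(x^\nu_r)]\,d\zeta_r$ is estimated with the sewing lemma (Proposition~\ref{prop:La}) together with the composition rule (Proposition~\ref{prop:smooth_of_weak}); the $\mathcal{C}^3_b$ regularity of $\sigma$ then bounds its controlled-path norm, up to constants depending on $\|\zeta\|_\alpha$, by $\|\Delta\|_\infty + \|\Delta\|_\alpha + \|\rho^\Delta\|_{2\alpha}$. On a sufficiently small subinterval $[0,\tau]$, with $\tau$ depending only on $\|\zeta\|_\alpha$, this contribution can be absorbed into the left-hand side, leaving the drift estimate from the previous paragraph plus the Lipschitz contribution from $\phi^{(1)}$ as the sole inhomogeneity. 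Patching together finitely many such subintervals partitioning $[0,T]$ and running a discrete Gronwall iteration yields the remainder bound in~\eqref{eq: est-beta}; the corresponding $\alpha$-H\"older bound on $\Delta^\zeta = \sigma(x^\eta) - \sigma(x^\nu)$ then follows from one more application of the composition rule, combined with the bounds just obtained on $\|\Delta\|_\alpha$ and $\|\Delta\|_\infty$.

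The main obstacle will be the book-keeping in the Gronwall iteration: one must set up the subinterval estimate so that the H\"older norm of $\rho^\Delta$ on $[s,t]$ is controlled by the sup-norm of $\Delta$ on $[0,s]$ plus the $L^{1/(1-2\alpha)}$ forcing from $\phi^{(2)}$, and ensure that the accumulated constants stay bounded as the number of subintervals grows. This is where Allan--Cohen's sup-norm Proposition~2.6 must be transposed to the H\"older scale, and where the exponent $1/(1-2\alpha)$ emerges naturally from the interplay of H\"older's inequality with the $2\alpha$-regularity required of the remainder.
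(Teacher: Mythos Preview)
Your proposal is correct and follows essentially the same approach as the paper: decompose the drift discrepancy into a Lipschitz-in-$x$ part and a measure-difference part, control the latter in the $2\alpha$-H\"older norm via H\"older's inequality with exponent $1/(1-2\alpha)$, bound the rough-integral and Gubinelli-derivative differences by standard rough-path composition/sewing estimates, and close by a small-time absorption argument followed by patching over $[0,T]$. The paper organizes the closure slightly differently (writing $E_T \le C\,E_T + \text{forcing}$ and absorbing directly on a short interval rather than framing it as a Gronwall iteration), but this is a cosmetic distinction.
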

\begin{proof}
 A controlled process decomposition (see Definition \ref{def:weakly-ctrld}) for $x^{\eta}$ and ${x}^{\nu}$ is easily obtained. Namely, if $x^{\eta}$ is defined by (\ref{eq:relaxed-rde}) with $\ga=\eta$, set 
\[\mathcal{I}_{st}^{\eta}=\int_{s}^{t}\sigma(x_{r}^{\eta}) \, d\zeta_{r}.\]
Then we have
\[ (x_{s}^{\eta})^{\zeta}=\sigma(x_{s}^{\eta}), \quad \text{and} \quad \rho^{x^{\eta}}_{st}=\rho^{\mathcal{I}^{\eta}}_{st}+\int_{s}^{t}\int_{U}b(r,x_{r}^{\eta},a)\eta(da)dr.\]
The same type of decomposition holds true for ${x}^{\nu}$. Therefore we get
\begin{equation} \label{eq: est-eq: ABC}
    E_{T}:=\|(x^{\eta})^{\zeta}-({x}^{\eta})^{\zeta}\|_{\alpha;[0,T]}+\|\rho^{x^{\eta}}-\rho^{{x}^{\nu}}\|_{2\alpha;[0,T]}\lesssim \|A\|_{\alpha} + \|B\|_{2\alpha}+\|C\|_{2\alpha} ,
\end{equation}
where $A\in \mathcal{C}_{1}^{\alpha}$ and $B, C \in \mathcal{C}_{2}^{2\alpha}$ are respectively defined as
\begin{align}
        \label{eq: A}A_{t}&:=\sigma(x^{\eta}_{t})-\sigma({x}^{\nu}_{t}),\\
        \label{eq: B}B_{st}&:=\int_{s}^{t}\int_{U}b(r,x_{r}^{\eta},a)\eta(da)dr-\int_{s}^{t}\int_{U}b(r,{x}^{\nu}_{r},a){\nu}_{r}(da)dr,\\
        \label{eq: C}C_{st}&:=\rho^{\mathcal{I}^{\eta}}_{st}-\rho^{{\mathcal{I}}^{\nu}}_{st}.
    \end{align}
    Let us examine the drift term $B$ above. It is readily checked from (\ref{eq: B}) that 
    \[|B_{st}|\leq \left|\int_{s}^{t}\int_{U}\{b(r,x_{r}^{\eta},a)-b(r,{x}_{r}^{\nu},a)\}\eta_{r}(da)dr \right| + \left|\int_{s}^{t}\int_{U}b(r,{x}_{r}^{\nu},a)\{\eta_{r}(da)-{\nu}_{r}(da)\}dr \right|. \]
    We now specialize our setting to the case $\alpha \in (1/3,1/2]$, the easy modification for the case $\alpha>1/2$ being left to the patient reader. Under those assumptions, owing to the regularity conditions on $b$ we get
    \begin{align*}
    |B_{st}|&\leq C_{b}\|x^{\eta}-x^{\nu}\|_{\infty}|t-s|+ C_{b} \int_{0}^{T}\mathbf{1}_{[s,t]}(r)\mathrm{d}_{FM}(\eta_{r},\nu_{r})\,dr\\
    & \leq  C_{b}\|x^{\eta}-x^{\nu}\|_{\alpha}T^{\alpha}|t-s|^{2\alpha}T^{1-2\alpha}+C_{b}|t-s|^{2\alpha}\lp \int_{0}^{T} \mathrm{d}_{FM}(\eta_{r},\nu_{r})^{\frac{1}{1-2\alpha}}\,dr \rp^{1-2\alpha},
        \end{align*}
        where we have invoked H\"older's inequality and the definition of Fortet-Mourier distance from Definition \ref{def: FM} for the second upper bound.
This ultimately implies that 
\begin{equation} \label{eq: II-1}\|B\|_{2\alpha}\leq C_{b}T^{1-\alpha}\|x^{\eta}-{x}^{\nu}\|_{\alpha}+C_{b}\lp \int_{0}^{T}\mathrm{d_{FM}}(\eta_{r},\nu_{r})^{\frac{1}{1-2\alpha}} \rp^{1-2\alpha}.
\end{equation}
Moreover, note that
\begin{align*}|\delta x^{\eta}_{st}- \delta {x}^{\nu}_{st}|&=|\{ (x_{s}^{\eta})^{\zeta}-({x}^{\nu}_{s})^{\zeta}\} \delta \zeta_{st} + \{\rho^{x^{\eta}}_{st}-\rho^{{x}^{\nu}}_{st}\}|\\
&\leq \|(x^{\eta})^{\zeta}-({x}^{\nu})^{\zeta}\|_{\alpha}\|\zeta\|_{\alpha}|t-s|^{\alpha} +\|\rho^{x^{\eta}}-\rho^{{x}^{\nu}}\|_{2\alpha}|t-s|^{2\alpha}, \end{align*}
which implies that
\begin{equation}\label{eq: II-2}
\|x^{\eta}-{x}^{\nu}\|_{\alpha}
\leq 
T^{\alpha}\|\zeta\|_{\alpha}\|(x^{\eta})^{\zeta}-({x}^{\nu})^{\zeta}\|_{\alpha}
+ T^{\alpha}\|\rho^{x^{\eta}}-\rho^{{x}^{\nu}}\|_{2\alpha}.\end{equation}
Substituting (\ref{eq: II-2}) into (\ref{eq: II-1}), we obtain
\begin{equation}\label{eq: II}
    \|B\|_{2\alpha} 
    \leq 
    C_{b}T\lp \|\zeta\|_{\alpha}\|(x^{\eta})^{\zeta}-(x^{\nu})^{\zeta}\|_{\alpha} 
    + \|\rho^{x^{\eta}}-\rho^{x^{\nu}}\|_{2\alpha}\rp 
    +C_{b}\lp\int_{0}^{T} \mathrm{d_{FM}}(\eta_{r},\nu_{r})^{\frac{1}{1-2\alpha}} \rp^{1-2\alpha}.
\end{equation}
The terms $\|A\|_{\alpha}$ and $\|C\|_{2\alpha}$ in (\ref{eq: est-eq: ABC}) are handled similarly to e.g \cite[Chapter 11]{friz-victoir},  thanks to some classical rough paths considerations. We let the reader check that
\begin{align}\label{eq: I}
    \|A\|_{\alpha}&\lesssim \|x^{\eta}-{x}^{\nu}\|_{\alpha}\|\zeta\|_{\alpha}+ \|\rho^{x^{\eta}}-\rho^{{x}^{\nu}}\|_{2\alpha}T^{\alpha},\\
\label{eq: III}
    \|C\|_{2\alpha}&\lesssim \lp\|(x^{\mu})^{\zeta}-({x}^{\nu})^{\zeta}\|_{\alpha}+\|\rho^{x^{\mu}}-\rho^{{x}^{\nu}}\|_{2\alpha} \rp \|\zeta\|_{\alpha}.
\end{align}
Combining (\ref{eq: II})-(\ref{eq: I})-(\ref{eq: III}) in (\ref{eq: est-eq: ABC}), we end up with 
\begin{equation} \label{eq: E}
    E_{T}\leq C E_{T} +C_{b}\lp\int_{0}^{T}\mathrm{d_{FM}}(\eta_{r},\nu_{r})^{\frac{1}{1-2\alpha}}\,dr \rp^{2\alpha},
\end{equation}
where $C=C(T,\|\zeta\|_{\alpha;[0,T]})$ increases in time. Choosing $\tilde{T}$ small enough, from (\ref{eq: E}) we obtain
\begin{equation}
    E_{t}\leq \lp \int_{0}^{t}\mathrm{d_{FM}}(\eta_{r},\nu_{r})^{\frac{1}{1-2\alpha}}dr \rp ^{2\alpha}, \quad \text{for all}\quad t\leq \tilde{T}.
\end{equation}
Applying standard pasting arguments, we can exhaust the interval $[0,T]$ to obtain (\ref{eq: est-beta}).
\end{proof}
The next proposition compares the derivative $V^{\beta}=\partial_{\beta}x^{\beta}$ with the derivative $W^{\beta}=\partial_{\beta}Y^{\beta}$. Its proof should be seen as a variant of \cite[Lemma 3.21]{Lew}.
\begin{proposition}\label{prop: variant-TL}
    Let $\zeta$ be a rough path satisfying Hypothesis \ref{hyp:zeta}. Let $b\in C_{b}^{0,1,1}([0,T]\times \R^m \times U;\R^m)$ and $\sigma\in C^{3}_{b}(\R^{m};\R^{m,d})$. Consider an optimal control $\overline{\ga}$ from Proposition \ref{thm:Optimal Pair} and its spike variation $\ga^{\beta}$ defined as in (\ref{spike}). Let $V^{\beta}$ and $W^{\beta}$ be the solutions of the rough differential equations (\ref{eq: RDE for derivative}) and (\ref{eq: dynamics_W}) respectively. Then
    \begin{equation}
      \label{eq: main-estimate}  \|V^{\beta}-W^{\beta}\|_{\infty;[0,T]}\lesssim |g^{\beta}-h^{\beta}|+ \beta^{1-2\alpha}.
    \end{equation}
\end{proposition}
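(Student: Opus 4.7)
The plan is to derive a linear rough differential equation for the difference $E^\beta := V^\beta - W^\beta$ and control its solution by a Duhamel/Gronwall type argument in the rough setting, where the non-homogeneous term is controlled via the stability estimate of Proposition~\ref{prop: variant-AC}.

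First, since $V_t^\beta = W_t^\beta = 0$ for $t \leq t_0+\beta$ by Lemmas~\ref{Lemma: X} and~\ref{Lemma: Y}, we have $E^\beta \equiv 0$ on $[0, t_0+\beta]$. For $t \geq t_0+\beta$, observe that by the very definition~(\ref{spike}) the spike variation $\ga^\beta$ coincides with $\overline{\ga}$ on $[t_0+\beta, T]$. Thus subtracting~(\ref{eq: dynamics_W}) from~(\ref{eq: RDE for derivative}) and rearranging yields
\begin{equation*}
E_t^\beta = (g^\beta - h^\beta) + \int_{t_0+\beta}^{t} \lp\int_U Db(r, x_r^\beta, a)\overline{\ga}_r(da)\rp E_r^\beta \, dr + \int_{t_0+\beta}^{t} D\sigma(r, x_r^\beta) E_r^\beta \, d\zeta_r + \mathcal{R}_t^\beta,
\end{equation*}
with a forcing remainder of the form
\begin{equation*}
\mathcal{R}_t^\beta = \int_{t_0+\beta}^{t} \lp \int_U [Db(r, x_r^\beta, a) - Db(r, \overline{x}_r, a)] \overline{\ga}_r(da) \rp W_r^\beta \, dr + \int_{t_0+\beta}^{t} [D\sigma(r, x_r^\beta) - D\sigma(r, \overline{x}_r)] W_r^\beta \, d\zeta_r.
\end{equation*}

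The second step is to estimate $\mathcal{R}^\beta$. For the drift piece, Lipschitz continuity of $Db$ in $x$, the bound $\|x^\beta - \overline{x}\|_\infty = O(\beta)$ from~(\ref{eq: Pontry-1}), and the uniform control on $W^\beta$ from~(\ref{eq: estimate_W}) give an $O(\beta)$ contribution. The rough integral piece is the delicate one. Using the composition rule of Proposition~\ref{prop:smooth_of_weak}, the process $D\sigma(\cdot, x^\beta) - D\sigma(\cdot, \overline{x})$ is weakly controlled, and its controlled norm is bounded in terms of $\|(x^\beta)^\zeta - \overline{x}^\zeta\|_\alpha + \|\rho^{x^\beta} - \rho^{\overline{x}}\|_{2\alpha}$. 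Invoking Proposition~\ref{prop: variant-AC} with $\eta = \ga^\beta$ and $\nu = \overline{\ga}$, and using that $\ga^\beta - \overline{\ga}$ is supported on $I = [t_0, t_0+\beta]$ together with the uniform bound $\mathrm{d}_{FM} \leq 2$ from Remark~\ref{rmk: FM}, we get
\begin{equation*}
\lp \int_0^T \mathrm{d}_{FM}(\ga_r^\beta, \overline{\ga}_r)^{\frac{1}{1-2\al}} \, dr \rp^{1-2\al} \lesssim \beta^{1-2\al},
\end{equation*}
so that $\|(x^\beta)^\zeta - \overline{x}^\zeta\|_\alpha + \|\rho^{x^\beta} - \rho^{\overline{x}}\|_{2\alpha} \lesssim \beta^{1-2\al}$. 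Rough integration against $\zeta$ combined with the controlled bound on $W^\beta$ from~(\ref{eq: estimate_W}) then yields $\|\mathcal{R}^\beta\|_\infty \lesssim \beta + \beta^{1-2\al} \lesssim \beta^{1-2\al}$, since $\beta \leq \beta^{1-2\al}$ for small $\beta$ and $\alpha > 1/3$.

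Finally, the equation for $E^\beta$ is a linear rough differential equation with the same structure as~(\ref{eq: Jacobian-RDE}) and bounded coefficients, driven by an inhomogeneous input of total magnitude $|g^\beta - h^\beta| + \|\mathcal{R}^\beta\|_\infty$. Applying Duhamel's principle (variation of constants expressed via the flow Jacobian) together with the exponential bound~(\ref{ineq: Jacob}) yields
\begin{equation*}
\|E^\beta\|_{\infty;[0,T]} \lesssim \lp |g^\beta - h^\beta| + \beta^{1-2\al} \rp \exp\lp C T \|\zeta\|_\alpha^{1/\alpha} \rp,
\end{equation*}
which is precisely~(\ref{eq: main-estimate}). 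The principal technical difficulty lies in the rough integral contribution to $\mathcal{R}^\beta$: one must track carefully through the controlled-path decomposition (using Propositions~\ref{prop:smooth_of_weak} and~\ref{prop:integral_as_weak}) that multiplication by the bounded controlled process $W^\beta$ and rough integration against $\zeta$ do not destroy the $\beta^{1-2\al}$ rate produced by Proposition~\ref{prop: variant-AC}.
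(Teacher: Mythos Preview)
Your proposal is correct and follows essentially the same route as the paper's own proof, which is very terse and defers the Gronwall--type argument to \cite[Lemma~3.21]{Lew}: write the linear RDE for the difference, control the inhomogeneity via the stability estimate of Proposition~\ref{prop: variant-AC} applied to $\eta=\ga^{\beta}$, $\nu=\overline{\ga}$ (yielding the $\beta^{1-2\alpha}$ rate from the support of $\ga^{\beta}-\overline{\ga}$ and the trivial bound on $\mathrm{d}_{FM}$), and conclude by a Duhamel/Jacobian bound. One small caveat: your drift estimate invokes ``Lipschitz continuity of $Db$ in $x$'', but the stated assumption $b\in C_b^{0,1,1}$ only guarantees $Db$ bounded and continuous; in practice this is harmless since the rough-integral contribution already produces the dominant $\beta^{1-2\alpha}$ term, but you may want to either assume one extra derivative on $b$ or absorb the drift piece directly into the stability argument rather than claim $O(\beta)$ for it.
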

\begin{proof}
    Recall that $V^{\beta}$ and $W^{\beta}$ solve (\ref{eq: RDE for derivative}) and (\ref{eq: dynamics_W}) respectively. 
    Set $Z^{\beta}_{t}:=W^{\beta}_{t}-V^{\beta}_{t}$.
    Using our adjusted Proposition~\ref{prop: variant-AC} and applying a similar argument as in \cite[Lemma 3.21]{Lew}, we obtain
\begin{equation}\label{eq: post-gron}
    \|Z^{\beta}\|_{\infty;[0,T]}\lesssim |g^{\beta}-h^{\beta}| + \lp \int_{0}^{T}\mathrm{d_{FM}}(\ga_{r}^{\beta},\overline{\ga}_{r})^{\frac{1}{1-2\alpha}}\,dr \rp^{1-2\alpha}.
\end{equation}
Using the fact that $\ga^{\beta}\equiv \overline{\ga}$ on $[0,t_{0})\cup (t_{0}+\beta,T]$ and implementing  the trivial bound~(\ref{est: FM}) for the Fortet-Mourier distance, relation~(\ref{eq: post-gron})
gives us
 \begin{equation}
    \|Z^{\beta}\|_{\infty;[0,T]}\lesssim |g^{\beta}-h^{\beta}|+\beta^{1-2\alpha},
\end{equation}
which is exactly (\ref{eq: main-estimate}).
\end{proof}
We are now ready to state the proposition asserting that $Y^{\beta}$ is an approximate derivative of the perturbation $x^{\beta}$ of $\bar{x}$.
\begin{proposition}\label{prop: Prop Z}
Working under the same assumptions as in Lemma \ref{Lemma: X} and Lemma \ref{Lemma: Y} we have
    \begin{equation}\label{Pontry-3}\|x^{\beta}-\overline{x}-Y^{\beta}\|_{\infty,[0,T]}=o(\beta).\end{equation}
\end{proposition}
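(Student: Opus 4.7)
The plan is to chain together the Taylor-type identities for $x^{\beta}$ and $Y^{\beta}$ that were obtained in the proofs of Lemma~\ref{Lemma: X} and Lemma~\ref{Lemma: Y}, and then apply the comparison between $V^{\beta}$ and $W^{\beta}$ provided by Proposition~\ref{prop: variant-TL}. More precisely, the key identity is the relation (\ref{eq: FTC x}) and its analog for $Y^{\beta}$, namely
\[
x^{\beta}_{t}-\overline{x}_{t}=\int_{0}^{\beta}V^{u}_{t}\,du,
\qquad
Y^{\beta}_{t}=\int_{0}^{\beta}W^{u}_{t}\,du,
\]
which follow from $x^{0}\equiv\overline{x}$ and $Y^{0}\equiv 0$ together with the right-differentiability results proved in those lemmas. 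Subtracting,
\[
\|x^{\beta}-\overline{x}-Y^{\beta}\|_{\infty;[0,T]}
\leq
\int_{0}^{\beta}\|V^{u}-W^{u}\|_{\infty;[0,T]}\,du.
\]

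The next step is to invoke Proposition~\ref{prop: variant-TL}, which gives $\|V^{u}-W^{u}\|_{\infty;[0,T]}\lesssim |g^{u}-h^{u}|+u^{1-2\alpha}$. To control $|g^{u}-h^{u}|$, I would compare the expressions (\ref{eq: g-beta}) and (\ref{eq: h_beta}) and use the Lipschitz property of $b$ in $x$ (Hypothesis~\ref{hyp:coeff}) to obtain
\[
|g^{u}-h^{u}|
\leq C_{b}\,|x^{u}_{t_{0}+u}-\overline{x}_{t_{0}+u}|
\leq C_{b}\,\|x^{u}-\overline{x}\|_{\infty;[0,T]}
= O(u),
\]
where the last bound is exactly (\ref{eq: Pontry-1}) from Lemma~\ref{Lemma: X}.

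Plugging these two estimates back, I would then integrate:
\[
\int_{0}^{\beta}\|V^{u}-W^{u}\|_{\infty;[0,T]}\,du
\;\lesssim\;
\int_{0}^{\beta}\bigl(u+u^{1-2\alpha}\bigr)\,du
\;\lesssim\;
\beta^{2}+\beta^{2-2\alpha}.
\]
Since $\alpha\in(1/3,1/2)$ we have $2-2\alpha>1$, so both terms are $o(\beta)$ as $\beta\downarrow 0$, which yields (\ref{Pontry-3}).

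The step I would expect to be the most delicate is not a single hard computation but rather making sure the quantities $V^{u}$ and $W^{u}$ are jointly measurable and integrable in $u$ so that the Riemann integrals above make sense in $\mathcal{Q}_{\zeta}^{\kappa}$ (and in particular in the sup norm); this uses the uniform bounds (\ref{Ineq: derivative_of_dynamics}) and (\ref{eq: estimate_W}), which are independent of $u\in[0,\beta]$. Once that measurability/integrability is in hand, the proof is essentially the concatenation of the three ingredients above: the fundamental theorem of calculus (giving $x^{\beta}-\overline{x}-Y^{\beta}$ as an integral of $V^{u}-W^{u}$), Proposition~\ref{prop: variant-TL} (controlling $V^{u}-W^{u}$ in sup norm), and the $O(u)$ bound on $|g^{u}-h^{u}|$ coming from (\ref{eq: Pontry-1}).
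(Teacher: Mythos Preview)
Your proposal is correct and follows essentially the same route as the paper's proof: the paper writes $x^{\beta}-\overline{x}-Y^{\beta}=\int_0^{\beta}(V^{u}-W^{u})\,du$ via the fundamental theorem of calculus, reduces the claim to showing $\|V^{u}-W^{u}\|_{\infty}\lesssim u^{\varepsilon}$ for some $\varepsilon>0$, and then obtains this from Proposition~\ref{prop: variant-TL} together with the bound $|g^{u}-h^{u}|=O(u)$ coming from~(\ref{eq: Pontry-1}) and the Lipschitz property of $b$. Your version is in fact slightly more explicit in carrying out the final integration to get $\beta^{2}+\beta^{2-2\alpha}=o(\beta)$.
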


\begin{proof}
In Lemma \ref{Lemma: X} and Lemma \ref{Lemma: Y}, we found rough differential equations satisfied by the derivatives of $x^{\beta}$ and $Y^{\beta}$ with respect to $\beta$, which we named $V^{\beta}$ and $W^{\beta}$ respectively (see (\ref{eq: RDE for derivative}) and (\ref{eq: dynamics_W})). Using the fact that $x^{0}\equiv 0$, $Y^{0}\equiv 0$, and the fundamental theorem of calculus, we obtain the following identity:
\begin{equation}
   \label{eq: obs} x^{\beta}_{t}-\overline{x}_{t}-Y^{\beta}_{t}=\int_{0}^{\beta}\{V^{u}_{t}-W^{u}_{t}\}\,du=-\int_{0}^{\beta}Z_{t}^{u}\,du,
\end{equation}
where the path $Z^{\beta}=W^{\beta}-V^{\beta}$ has been introduced in the proof of Proposition \ref{prop: variant-TL}.
In order to obtain our desired outcome (\ref{Pontry-3}), the previous observation (\ref{eq: obs}) implies that it only suffices to show 
\begin{equation} \label{eq: Step 1}
|Z^{\beta}_{t}|=|V^{\beta}_{t}-W^{\beta}_{t}|\lesssim \beta^{\varepsilon},
\end{equation}
for some $\varepsilon>0$. In addition, Proposition \ref{prop: variant-TL} entails that (\ref{eq: Step 1}) holds true as long as we prove that for some $\varepsilon'>0$,
\begin{equation}\label{eq: g-h-est}
    |g^{\beta}-h^{\beta}|\lesssim \beta^{\varepsilon'}.
\end{equation}
Now recall the expression (\ref{eq: g-beta}) for $g^{\beta}$ and (\ref{eq: h_beta}) for $h^{\beta}$. A close examination of those expressions shows that (\ref{eq: g-h-est}) is an easy consequence of (\ref{eq: Pontry-1}) alongside the Lipschitz assumption on $b$. This finishes the proof.\end{proof}
Our final lemma towards Pontryagin's principle is a comparison between the rewards related to $\ga$ and $\ga^{\beta}$.

\begin{lemma}\label{lem: Taylor J}Let $\overline{\ga}$ be the optimal control from Theorem \ref{thm:Optimal Pair}. Consider the spike variation $\ga^{\beta}$ from Definition \ref{def:spike variation}. We assume that $F\in C_{b}^{0,1,0}([0,T]\times\R^m\times \mathcal{P}(U);\R)$ and $G\in C_{b}^{1}(\R^{m};\R)$. Let $J_{T}(\cdot)$ be the reward function (\ref{eq:J}). Then we have the following Taylor-like expansions of the reward functional with respect to the spike variation of the control variable:
    \begin{multline} \label{eq: Taylor J} J_{T}(\gamma^{\beta},y)-J_{T}(\overline{\gamma},y)=\left<DG(\overline{x}_{T}),Y_{T}^{\beta}) \right>+\int_{0}^{T} \left<DF(r,\overline{x}_{r},\overline{\gamma}_{r}),Y_{r}^{\beta}\right>\, dr \\ \hspace{3.5cm} + \int_{0}^{T}\{F(r,\overline{x}_{r},\mu)-F(r,\overline{x}_{r},\overline{\gamma}_{r})\}\mathbf{1}_{I}(r)\,dr +o(\beta),
    \end{multline}
    where $Y^{\beta}$ is defined by (\ref{eq: Y}) and the interval $I$ is introduced in Definition \ref{def:spike variation}.
\end{lemma}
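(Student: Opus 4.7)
The strategy is to write the increment $J_T(\gamma^\beta,y)-J_T(\overline\gamma,y)$ as the sum of a terminal contribution and a running contribution, then expand each by a first-order Taylor expansion, using Lemma~\ref{Lemma: X} and Proposition~\ref{prop: Prop Z} to control the remainders. Concretely, writing
\[
J_T(\gamma^\beta,y)-J_T(\overline\gamma,y)=\bigl[G(x_T^\beta)-G(\overline x_T)\bigr]+\int_0^T\bigl[F(r,x_r^\beta,\gamma_r^\beta)-F(r,\overline x_r,\overline\gamma_r)\bigr]dr,
\]
I will treat the two pieces separately.

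For the terminal piece, since $G\in C_b^1$, a standard Taylor expansion gives $G(x_T^\beta)-G(\overline x_T)=\langle DG(\overline x_T),x_T^\beta-\overline x_T\rangle+o(|x_T^\beta-\overline x_T|)$. Then I substitute the approximate-derivative identity $x_T^\beta-\overline x_T=Y_T^\beta+(x_T^\beta-\overline x_T-Y_T^\beta)$. Proposition~\ref{prop: Prop Z} gives $\|x^\beta-\overline x-Y^\beta\|_\infty=o(\beta)$, and Lemma~\ref{Lemma: X} gives $\|x^\beta-\overline x\|_\infty=O(\beta)$, so the terminal piece equals $\langle DG(\overline x_T),Y_T^\beta\rangle+o(\beta)$, as desired.

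For the running piece, I insert the intermediate term $F(r,\overline x_r,\gamma_r^\beta)$ to split
\[
F(r,x_r^\beta,\gamma_r^\beta)-F(r,\overline x_r,\overline\gamma_r)=\bigl[F(r,x_r^\beta,\gamma_r^\beta)-F(r,\overline x_r,\gamma_r^\beta)\bigr]+\bigl[F(r,\overline x_r,\gamma_r^\beta)-F(r,\overline x_r,\overline\gamma_r)\bigr].
\]
Using $F\in C_b^{0,1,0}$, the first bracket is expanded via Taylor in $x$ as $\langle DF(r,\overline x_r,\gamma_r^\beta),x_r^\beta-\overline x_r\rangle$ plus a remainder uniformly $o(\|x^\beta-\overline x\|_\infty)=o(\beta)$ after integration over $[0,T]$. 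I then replace $x_r^\beta-\overline x_r$ by $Y_r^\beta$, invoking Proposition~\ref{prop: Prop Z} again to absorb the error, and replace $DF(r,\overline x_r,\gamma_r^\beta)$ by $DF(r,\overline x_r,\overline\gamma_r)$. The latter substitution creates an error supported on $I_\beta$ of length $\beta$; since $\|Y^\beta\|_\infty=O(\beta)$ by Lemma~\ref{Lemma: Y} and $DF$ is bounded, this error is $O(\beta^2)=o(\beta)$. As for the second bracket, the identity $\gamma^\beta=\overline\gamma$ on $[0,T]\setminus I_\beta$ and $\gamma^\beta=\mu$ on $I_\beta$ immediately yields the third term of \eqref{eq: Taylor J}. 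Assembling everything gives the claimed expansion.

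The only potentially delicate point is ensuring that the pointwise Taylor remainders $o(|x_r^\beta-\overline x_r|)$ inside the time integral can be converted into a uniform $o(\beta)$ bound. This follows because $DF(r,\overline x_r,\cdot)$ and $DG$ are continuous on the compact range traversed by $(\overline x,x^\beta)$ for $\beta$ small, so the modulus of continuity of the derivative is uniform over $r\in[0,T]$ and $\beta\le\beta_0$, and multiplying by the uniform bound $\|x^\beta-\overline x\|_\infty=O(\beta)$ gives $o(\beta)$ after integration on $[0,T]$. Once this uniformity is in hand, the rest is bookkeeping along the lines sketched above.
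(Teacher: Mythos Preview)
Your proposal is correct and follows essentially the same route as the paper: split into terminal and running pieces, Taylor-expand in the state variable, and use Lemma~\ref{Lemma: X}, Lemma~\ref{Lemma: Y}, and Proposition~\ref{prop: Prop Z} to control the remainders. The paper uses the integral mean value theorem (interpolation via $\theta\in[0,1]$) rather than writing remainders as $o(\lvert x^\beta-\overline{x}\rvert)$, and it bounds the second-order remainders as $O(\beta^2)$ by invoking Lipschitz continuity of $DF$ and $DG$; your uniform-continuity argument on compact ranges is actually better aligned with the stated hypotheses $F\in C_b^{0,1,0}$ and $G\in C_b^1$, and your explicit handling of the $DF(\cdot,\cdot,\gamma_r^\beta)\to DF(\cdot,\cdot,\overline\gamma_r)$ switch (supported on $I_\beta$, paired with $\|Y^\beta\|_\infty=O(\beta)$) fills a step the paper leaves to ``some algebraic manipulation''.
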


\begin{proof} 
Write the definition (\ref{eq:J}) of $J_{T}$ for both $\ga^{\beta}$ and $\overline{\ga}$, then differentiate $F$ and $G$ along an interpolation from $\overline{x}$ to $x^{\beta}$. We let the reader check that we obtain
    \begin{multline}
       J_{T}(\gamma^{\beta},y)-J_{T}(\overline{\gamma},y)=\int_{0}^{T}\int_{0}^{1}\left<DF(t,\overline{x}_{t}+\theta (x^{\beta}_{t}-\overline{x}_{t}),\gamma^{\beta}_{t}),x^{\beta}_{t}-\overline{x}_{t} \right>\, d\theta \,dt \\
       +\int_{0}^{T}\{F(t,\overline{x}_{t},\gamma_{t}^{\beta})-F(t,\overline{x}_{t},\overline{\gamma}_{t})\}\, dt 
+\int_{0}^{1}\left<DG(\overline{x}_{T}+\theta (x^{\beta}_{T}-\overline{x}_{T})), x_{T }^{\beta}-\overline{x}_{T} \right>d\theta.
\end{multline}
Next we add and subtract terms of the form $DF(t,\overline{x}_{t},\ga^{\beta})$ and $DG(\overline{x}_{T})$. We end up with
\begin{multline*}
     J_{T}(\ga^{\beta},y)-J_{T}(\overline{\ga},y)  = \int_{0}^{T}\int_{0}^{1}\left<DF(t,\overline{x}_{t}+\theta (x^{\beta}_{t}-\overline{x}_{t}),\gamma^{\beta}_{t})-DF(t,\overline{x}_{t},\gamma^{\beta}_{t}),x^{\beta}_{t}-\overline{x}_{t} \right>\, d\theta \,dt \\
    +\int_{0}^{1}\left< DG(\overline{x}_{T}
       +\theta (x^{\beta}_{T}-\overline{x}_{T}))-DG(\overline{x}_{T}),x^{\beta}_{T}-\overline{x}_{T} \right> d\theta
+\int_{0}^{T}\left<DF(t,\overline{x}_{t},\gamma^{\beta}_{t}),x^{\beta}_{t}-\overline{x}_{t} \right>\, dt \\
       + \left<DG(\overline{x}_{T}),x^{\beta}_{T}-\overline{x}_{T}) \right>+\int_{0}^{T}\{F(t,\overline{x}_{t},\mu)-F(t,\overline{x}_{t},\overline{\gamma}_{t})\}1_{I}(t)\,dt .
       \end{multline*}
      After reordering some of the terms above to match with (\ref{eq: Taylor J}) and some algebraic manipulation, we obtain:
   \begin{multline}\label{eq: TaylorJ1}
J_{T}(\ga^{\beta},y)-J_{T}(\overline{\ga},y)=\left<DG(\overline{x}_{T}),Y^{\beta}_{T}\right> +\int_{0}^{T}\left<DF(t,\overline{x}_{t},\overline{\gamma}_{t}),Y_{t}^{\beta}
        \right>\,dt\\ 
        + \int_{0}^{T}\{F(t,\overline{x}_{t},\mu)-F(t,\overline{x}_{t},\overline{\gamma}_{t})\}1_{I}(t)\,dt+ A+B+C+D, 
   \end{multline}
   where the remainders $A,B,C,D$ are respectively defined by
   \begin{align*}
A&=\left<DG(\overline{x}_{T}),x^{\beta}_{T}-\overline{x}_{T}-Y^{\beta}_{T} \right>,\\
       B&=\int_{0}^{1}\left<DG(\overline{x}_{T}+\theta (x^{\beta}_{T}-\overline{x}_{T}))-DG(\overline{x}_{T}),x^{\beta}_{T}-\overline{x}_{T} \right>\, d\theta,\\
       C&=\int_{0}^{T}\left< DF(t,\overline{x}_{t},\overline{\gamma}_{t}),x^{\beta}_{t}-\overline{x}_{t}-Y^{\beta}_{t})\right>dt,\\
       D&=\int_{0}^{T}\int_{0}^{1}\left<DF(t,\overline{x}_{t}+\theta (x^{\beta}_{t}-\overline{x}_{t}),\gamma^{\beta}_{t})-DF(t,\overline{x}_{t},\gamma^{\beta}_{t}),x^{\beta}_{t}-\overline{x}_{t} \right>\, d\theta \,dt.
   \end{align*}
   We can now invoke our preliminary lemmas in order to upper bound the remainder terms.
   Specifically since $DG$ is bounded, by applying Lemma \ref{prop: Prop Z} we have $A=o(\beta)$. Also, using the Lipschitz property of $DG$ and applying Lemma \ref{Lemma: X} twice, we obtain $B=O(\beta^{2})$. Moreover, observing that $DF$ is bounded and applying Lemma \ref{prop: Prop Z} we get $C=o(\beta)$. Finally, employing our Lipschitz assumption on $DF$ and applying Lemma \ref{Lemma: X} twice, we establish $D=O(\beta^2)$. Putting things together, by (\ref{eq: TaylorJ1}) and the fact that $A+B+C+D=o(\beta)$, we obtain relation~(\ref{eq: Taylor J}).
 \end{proof}

\subsection{Proof of Theorem \ref{Pontryagin}}
        After the long series of preliminary results in Sections~\ref{sec: Prelim} and~\ref{sec:identification-derivative}, we now turn to the proof of our main theorem for ththe Pontryagin maximum principle.
        
\begin{proof}[Proof of Theorem \ref{Pontryagin}]
With our preliminary bounds in hand, the proof is similar to the deterministic case~\cite{yong-zhou} as well as the rough case with non-relaxed controls \cite{diehl}. We might thus skip some of the details for conciseness. First, notice that
\begin{equation}\label{eq: FTC-pY}
\left<DG(\bar{x}_{T}),Y_{T}^{\beta} \right>=\left<p_{T},Y_{T}^{\beta} \right> -\left<p_{0},Y_{0}^{\beta} \right>,
\end{equation}
where we have used the fact that $Y_{0}^{\beta}=0$ (see Definition \ref{def: variational equation}) and the terminal condition for $p_{\cdot}$ from (\ref{RDEp}).
Next, since $\zeta$ is a geometric rough path, applying the rough product rule (see \cite[Theorem 7.7]{MR4174393}) to the right hand side of (\ref{eq: FTC-pY}) we obtain
\begin{equation}\label{eq: RoughPR}
    \left< DG(\overline{x}_{T}),Y_{T}^{\beta} \right> =\int_{0}^{T}d\left<p_{r},Y_{r}^{\beta} \right>
    =\int_{0}^{T}\left<p_{r},dY_{r}^{\beta} \right>+\int_{0}^{T}\left<dp_{r},Y_{r}^{\beta} \right>.
\end{equation}
Finally, plugging in the expressions of $dp_{t}$ and $dY_{t}^{\beta}$ (from (\ref{RDEp}) and (\ref{eq: Y}) respectively) into~(\ref{eq: RoughPR}), we observe that the rough terms cancel out. This yields
\begin{multline}\label{eq: RoughPR1}
    \left<DG(\overline{x}_{T}),Y_{T}^{\beta} \right>=\int_{0}^{T}\left <p_{r},\left\{\int_{U}b(r,\overline{x}_{r},a)\mu(da)-\int_{U}b(r,\overline{x}_{r},a)\overline{\gamma}_{r}(da) \right\}\mathbf{1}_{I}(r) \right>dr\\-\int_{0}^{T}\left<Y_{r}^{\beta},DF(r,\overline{x}_{r},\overline{\gamma}_{r}) \right>dr .
\end{multline}
Now recall that we have obtained an expression for $J_{T}(\ga^{\beta},y)-J_{T}(\overline{\ga},y)$ in (\ref{eq: Taylor J}). Since $\overline{\ga}$ is an optimal control, we thus get
\begin{align*}
0 \geq J_{T}(\gamma^{\beta},y)-J_{T}({\overline\gamma},y)&=\left<DG(\overline{x}_{T}),Y_{T}^{\beta} \right>+\int_{0}^{T} \left<DF(r,\overline{x}_{r},\overline{\gamma}_{r}),Y_{r}^{\beta}\right>\, dr \\ & \quad+ \int_{0}^{T}\{F(r,\overline{x}_{r},\mu)-F(r,\overline{x}_{r},\overline{\gamma}_{r})\}\mathbf{1}_{I}(r)\,dr +o(\beta).
\end{align*}
Hence substituting (\ref{eq: RoughPR1}) into the above identity, some simple algebraic manipulations yield
\begin{align*}
  J_{T}(\ga^{\beta},y)-J_{T}(\overline{\ga},y)  &= \int_{t_{0}}^{t_{0}+\beta}\left <p_{r},\int_{U}b(r,\overline{x}_{r},a)\mu(da)-\int_{U}b(r,\overline{x}_{r},a)\overline{\gamma}_{r}(da)\right>dr\\
    &\quad + \int_{t_{0}}^{t_{0}+\beta}\{F(r,\overline{x}_{r},\mu)-F(r,\overline{x}_{r},\overline{\gamma}_{r})\}dr +o(\beta)
\end{align*}
Finally, dividing by $\beta$ and letting $\beta \to 0$ will give us the desired relation~(\ref{PMP}).\end{proof}

\section{$Q$-Learning}\label{sec:Q}
Recall that the reward function on an interval $[t_{0},T]$ is given by $J_{t_{0},T}$ in \eqref{eq:J}. In order to optimize the policy $\ga$ in real time, it is natural  to quantify the variations of $J_{t_0,T}$ when one modifies $\ga$ on small intervals. This type of method is commonly labeled as $Q$-learning in the reinforcement learning literature, and we will develop those variational tools in the current section. Note that the small interval perturbation technique is reminiscent of what we have seen in Section~\ref{sec: Pontry}, where differentiations along small interval variations have been used to derive Pontryagin's maximal principle. Invoking those techniques, in the current section we will introduce a notion of $Q$-function and derive several possible definitions of $q$-functions.

\subsection{Definition of the $Q$-function}
Let us start from a path $x^{t_0,y,\ga}$ as given in \eqref{eq:relaxed-rde}. The main type of variation we shall consider consists in changing the {path measure} $\ga$ for a constant measure $\mu$ on an interval of the form $[t_0, t_0+\beta]$, similarly to what we had done in Definition~\ref{def:spike variation}. Let us label some notation before turning to the corresponding computations.

\begin{definition}\label{def:f1}
Consider a small parameter $\beta>0$, a strategy $\ga \in \mathcal{V}^{\varepsilon,L}$(see (\ref{eq:cv^ep,L}) for the definition), a fixed $t_0\in[0,T]$ and a fixed relaxed control $\mu\in \mathcal{P}(U)$. Then
\begin{enumerate}[wide, labelwidth=!, labelindent=0pt, label=\emph{(\arabic*)}]
\setlength\itemsep{.1in}         
\item Similarly to Definition \ref{def:spike variation}, we define a spike variation $\hat{\ga}^{\beta}$ of $\ga$ as 
    \begin{equation}\label{eq: Qspike} \hat{\ga}^{\beta}_{t}=
        \begin{cases}
            \mu\, , \quad \text{if} \quad t\in I\\
            \ga_{t}\, , \quad \text{if} \quad t\notin I,
        \end{cases}
    \end{equation}
    where the interval $I$ is $I_{\beta}=[t_0,t_0+\beta]$.
    \item We call $\hat{x}^{\beta}$ the solution of (\ref{eq:relaxed-rde}) starting at $y\in \R^{n}$ at time $t_0$ and controlled by $\hat{\ga}^{\beta}$. Whenever needed we will highlight the dependence on all parameters as $\hat{x}^{t_0,y,\beta,\mu,\ga}$.
\end{enumerate}
\end{definition}
\begin{remark}
    One can decompose the dynamics $\hat{x}^{\beta}$ exactly as in (\ref{eq:decomp-dynamics})-(\ref{eq:decomp-dynamics-2}). We obtain
    \begin{equation}\label{eq:rde-a-ga}
        \begin{aligned}
            \hat{x}_{t}^{\beta}&=y+\int_{t_0}^{t}\int_U b(r,x_{r}^{{\mu}},u)\mu(du)dr+\int_{t_0}^{t}\sigma(r,x_{r}^{{\mu}})d\zeta_{r}, \quad t\in[t_0,t_0+\beta)\\
            \hat{x}_{t}^{\beta}&=x_{t_0+\beta}^{\mu}+\int_{t_0+\beta}^{t}\int_{U}b(r,{\hat{x}_{r}^{\beta}},u)\ga_{r}(du)dr+\int_{t_0+\beta}^{t}\sigma(r,{\hat{x}_{r}^{\beta}})d\zeta_{r},\quad t\in[t_0+\beta,T],
        \end{aligned}
    \end{equation}
where $x^{\mu}$ is the solution of (\ref{eq:relaxed-rde}) starting at $y$ at time $t_{0}$ and controlled by $\mu$.
\end{remark}

With Definition~\ref{def:f1} in hand, the $Q$-function alluded to above is the reward related to $\hat{x}^{\beta}$. We proceed to define this function in our context.
\begin{definition}\label{def: perturbed-dynamics}Fix $t_0\in [0,T]$, $y\in \R^{m}$, $\beta>0$, $\ga \in \mathcal{V}^{\varepsilon,L}$ and {$\mu \in \mathcal{P}(U)$}. Let $\hat{x}^{\beta}$ be the path solving the rough differential equation (\ref{eq:rde-a-ga}). Also, we consider two functions $F$ and $G$ which satisfy Hypothesis \ref{hyp:reward}. We define the reward-type function $Q_{\beta}$ in the following way:
\beq\label{eq:Q}
Q_{\beta}(t_0,y,\mu;\ga) = \int_{t_0}^{t_0+\beta} F(s, \hat{x}^{\beta}_{s}, \mu) ds + \int_{t_0+\beta}^T F(s, \hat{x}^{\beta}_{s}, \ga_s) ds + G(\hat{x}^{\beta}_{T}).
\eeq
\end{definition}

\begin{remark}\label{rem: zero-perturb}
    Recall the reward function $J_{t,T}$ from equation (\ref{eq:J}). One notices that, when the perturbation parameter $\beta$ is equal to 0, we have
    \begin{equation}
        Q_{0}(t_0,y,\mu;\ga)=J_{t_0,T}(y;\ga).
    \end{equation}
Moreover, recalling (\ref{eq: Qspike}) and (\ref{eq:rde-a-ga}), it is readily seen that $Q_{\beta}$ is a reward-type function. Indeed, we have the following identity, whose easy proof is left to the reader:
    \begin{equation}
        Q_{\beta}(t_0,y,\mu;\ga)=J_{t_0,T}(y;\hat{\ga}^{\beta}).
    \end{equation}
    \end{remark}
    
\subsection{Defining the $\mathbf{q}$-function through differentiation}\label{sec:q-function}
Our first definition of $q$-function will follow the steps of Section \ref{sec: Pontry}. More specifically, one can differentiate the solution $\hat{x}^{\beta}$ of~(\ref{eq:rde-a-ga}) with respect to the parameter $\beta$ and observe the effects on $Q_{\beta}$. This is reminiscent of what we did in Section \ref{sec: Prelim}. We begin by defining the main derivative of interest in this context.

\begin{definition}\label{def: hat-deriv}
    Let $\hat{x}^{\beta}$ be the solution of the rough differential equation (\ref{eq:rde-a-ga}). We define $\widehat
{V}^{\beta}$ as $\partial_{\beta}\hat{x}^{\beta}$, similarly to what we did in Lemma \ref{Lemma: X} (see equation (\ref{eq: RDE for derivative})).
\end{definition}

\begin{remark}
    Existence of the derivative process $\hat{V}^{\beta}$ is in fact a small variation on Lemma~\ref{Lemma: X}, replacing $\overline{\ga}$ therein by the path $\ga$ in (\ref{eq: Qspike}). We let the reader check the details.
\end{remark}

As mentioned in \cite{jia2023qlearning}, the function $Q_{\beta}$ in~(\ref{eq:Q}) satisfies the relation
\[\lim_{\beta \to 0} \lln Q_{\beta}(t_{0},y,\mu;\ga)-J_{t_{0},T}(y;\ga) \rrn=0.
\]
One can thus try to differentiate $\beta\mapsto Q_{\beta}(t_{0},y,\mu;\ga)$ in order to get an infinitesimal version of the reward function. We will develop these ideas in our setting.

\begin{proposition}\label{prop:q1}
Let the hypothesis from Definition \ref{def: perturbed-dynamics} prevail. Let $Q_{\beta}$ be the reward-type function (\ref{eq:Q}) and $J_{t,T}$ be the quantity given by (\ref{eq:J}). We will also denote $x^{t,y,\ga}$ as $x^{\ga}$, and recall that the derivative $\widehat{V}^{\beta}$ is given in Definition~\ref{def: hat-deriv}. Then we have the following 
\begin{equation}\label{eq:litq1}
    \lim_{\beta \to 0}\frac{Q_{\beta}(t_0,y,\mu;\ga)-J_{t_0,T}(y;\ga)}{\beta}=q(t_{0},y,\mu;\ga),
\end{equation}
where the function $q$ is defined on $[0,T]\times \R^{m}\times \mathcal{P}(U)\times \mathcal{V}^{\varepsilon}$ by
\begin{equation}\label{eq:litq2}
    q(t_{0},y,\mu;\ga)=F(t_0,y,\mu)-F(t_0,y,\ga_{t_{0}})\\+\int_{t_0}^{T}DF(s,x_{s}^{\ga},\ga_{s})\widehat{V}^{0}_{s}ds  +DG(x_{T}^{\ga})\widehat{V}^{0}_{T}.
\end{equation}
\end{proposition}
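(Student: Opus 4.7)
The plan is to leverage the identity from Remark~\ref{rem: zero-perturb}, namely $Q_\beta(t_0,y,\mu;\ga) = J_{t_0,T}(y;\hat\ga^\beta)$, which reduces the numerator in~\eqref{eq:litq1} to a reward-difference of exactly the form treated by Lemma~\ref{lem: Taylor J}. Although that lemma is stated along the optimal pair $(\overline{x},\overline{\ga})$, its proof relies solely on interpolating $F$ and $G$ in the spatial variable and on the bounds from Lemmas~\ref{Lemma: X},~\ref{Lemma: Y} and Proposition~\ref{prop: Prop Z}; optimality is never used. The same Taylor expansion therefore applies with $\overline{\ga}$ replaced by the generic admissible control $\ga$ and $\overline{x}$ by $x^{\ga}$, yielding
\begin{multline*}
Q_\beta(t_0,y,\mu;\ga) - J_{t_0,T}(y;\ga) = \bigl\langle DG(x^{\ga}_T),\tilde{Y}^\beta_T\bigr\rangle + \int_{t_0}^T \bigl\langle DF(r,x^{\ga}_r,\ga_r),\tilde{Y}^\beta_r\bigr\rangle\,dr \\
 + \int_{t_0}^{t_0+\beta} \bigl[F(r,x^{\ga}_r,\mu) - F(r,x^{\ga}_r,\ga_r)\bigr]\,dr + o(\beta),
\end{multline*}
where $\tilde{Y}^\beta$ solves the variational equation~\eqref{eq: Y} centered at $(x^{\ga},\ga)$ in place of the optimal pair.

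Dividing by $\beta$ and sending $\beta\downarrow 0$ reduces the proof to three limit computations. For the small-interval integral, the continuity of $F$ in its three arguments, together with the H\"older continuity of $s\mapsto x^{\ga}_s$ and of $s\mapsto \ga_s$ in the $W_2$-distance (provided by Hypothesis~\ref{hyp:rel-control}), delivers via Lebesgue differentiation
\begin{equation*}
\frac{1}{\beta}\int_{t_0}^{t_0+\beta} \bigl[F(r,x^{\ga}_r,\mu) - F(r,x^{\ga}_r,\ga_r)\bigr]\,dr \,\longrightarrow\, F(t_0,y,\mu) - F(t_0,y,\ga_{t_0}).
\end{equation*}
For the other two terms it suffices to prove that $\tilde{Y}^\beta/\beta \to \widehat{V}^0$ uniformly on $[t_0,T]$; dominated convergence (using an $L^\infty$ bound of the form~\eqref{eq: estimate_W}) will then yield the required limits $\langle DG(x^{\ga}_T),\widehat{V}^0_T\rangle$ and $\int_{t_0}^T\langle DF(r,x^{\ga}_r,\ga_r),\widehat{V}^0_r\rangle\,dr$.

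The main obstacle is therefore the identification $\lim_{\beta\downarrow 0}\tilde{Y}^\beta/\beta = \widehat{V}^0$. Mimicking the proof of Lemma~\ref{Lemma: Y}, one differentiates the variational equation with respect to $\beta$ to obtain a linear RDE for $\tilde{W}^\beta = \partial_\beta\tilde{Y}^\beta$; evaluated at $\beta=0$, this equation coincides exactly with the RDE satisfied by $\widehat{V}^0$ from Definition~\ref{def: hat-deriv}, namely~\eqref{eq: RDE for derivative} with $(\overline{x},\overline{\ga})$ replaced by $(x^{\ga},\ga)$ and inhomogeneous term $\int_U b(t_0,y,a)\,[\mu - \ga_{t_0}](da)$. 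Uniqueness of solutions to this linear RDE forces $\tilde{W}^0 = \widehat{V}^0$, and since $\tilde{Y}^0\equiv 0$ the fundamental theorem of calculus in $\beta$ gives $\tilde{Y}^\beta/\beta = \widehat{V}^0 + o(1)$ uniformly on $[t_0,T]$. Assembling the three limits yields precisely~\eqref{eq:litq2}.
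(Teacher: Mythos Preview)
Your proposal is correct but takes a genuinely different route from the paper. The paper never invokes Lemma~\ref{lem: Taylor J}, Lemma~\ref{Lemma: Y}, or the variational process $\tilde{Y}^\beta$: it works directly from the definitions, splitting $\tfrac{1}{\beta}(Q_\beta - J_{t_0,T})$ into three pieces $A^\beta + B^\beta + C^\beta$ corresponding to the $[t_0,t_0+\beta]$ integral, the $[t_0+\beta,T]$ integral, and the terminal cost, and then computing each limit by elementary manipulations together with the differentiability $\partial_\beta \hat{x}^\beta|_{\beta=0} = \widehat{V}^0$. Your approach instead recycles the Taylor expansion of Lemma~\ref{lem: Taylor J} (correctly noting it does not use optimality), which packages the spatial interpolation once and for all; the price is that you must introduce the auxiliary process $\tilde{Y}^\beta$ and then establish $\tilde{Y}^\beta/\beta \to \widehat{V}^0$ via a re-derivation of Lemma~\ref{Lemma: Y} and an RDE-uniqueness argument matching $\tilde{W}^0$ with $\widehat{V}^0$. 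This is sound but more circuitous, since the paper's direct decomposition sidesteps $\tilde{Y}^\beta$ entirely. One small slip: uniform convergence of $\tilde{Y}^\beta/\beta$ on all of $[t_0,T]$ actually fails at the endpoint $t=t_0$ (there $\tilde{Y}^\beta_{t_0}/\beta=0$ while $\widehat{V}^0_{t_0}=g^0$), but this is harmless because you only use the limit under a Lebesgue integral and at $t=T$, where pointwise convergence plus the uniform bound~\eqref{eq: estimate_W} suffices.
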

\begin{notation}\label{not:q-a}
    The function $q$ in (\ref{eq:litq1})-(\ref{eq:litq2}) is usually called (small) $q$-function in the literature (see \cite{jia2023qlearning}). In particular, when $\mu=\delta_{a}$ for a given $a\in U$, we set $q(t,y,\delta_{a};\ga)\equiv q(t,y,a;\ga)$. 
\end{notation}

\begin{proof}[Proof of Proposition \ref{prop:q1}]
First we decompose the renormalized difference in the left hand side of (\ref{eq:litq1}) in the following way:
   \begin{equation}\label{eq: decomp-ABC}
   \frac{Q_{\beta}(t_0,y,\mu;\ga)-J_{t_0,T}(y;\ga)}{\beta}= A^{\beta} +B^{\beta} +C^{\beta},
   \end{equation}
   where the quantities $A^{\beta}$, $B^{\beta}$,$C^{\beta}$ are respectively defined by
   \begin{align}
       A^{\beta}&=\frac{1}{\beta}\int_{t_0}^{t_0+\beta}\{F(s,\hat{x}_{s}^{\beta},\mu)-F(s,x_{s}^{\ga},\ga_{s})\}ds, \label{eq: decomp-A}\\
       B^{\beta}&=\frac{1}{\beta}\int_{t_0+\beta}^{T}\{F(s,\hat{x}_{s}^{\beta},\ga_{s})-F(s,x_{s}^{\ga},\ga_{s})\}ds,\label{eq: decomp-B}\\
       C^{\beta}&=\frac{1}{\beta}\lp G(\hat{x}_{T}^{\beta})-G(x_{T}^{\ga})\rp\label{eq: decomp-C}.
   \end{align}
   We will study the limit of these three terms separately.
       
 \smallskip
\noindent
\textit{Limit of $A^{\beta}$.} We further decompose $A^{\beta}$ into
\begin{equation}
    A^{\beta}=A^{\beta}_{1}+A^{\beta}_{2}, \label{eq: Abeta}
\end{equation}
where the terms $A_{1}^{\beta}$ and $A^{\beta}_{2}$ are given by
\begin{align}
    A^{\beta}_{1}&=\frac{1}{\beta}\int_{t_0}^{t_0+\beta}\{F(s,\hat{x}_{s}^{\beta},\mu)-F(s,x_{s}^{\ga},\mu)\}ds,\label{eq: decompA1}\\
    A^{\beta}_{2}&=\frac{1}{\beta}\int_{t_0}^{t_0+\beta}\{F(s,x_{s}^{\ga},\mu)-F(s,x_{s}^{\ga},\ga_{s})\}ds. \label{eq: decompA2}
\end{align}
In addition, arguing very similarly to what we did in Lemma \ref{Lemma: X} we get that $\|\hat{x}^{\beta}-x^{\ga}\|_{\infty;[0,T]}=O(\beta)$. Plugging this identity into (\ref{eq: decompA1}) we get
\begin{equation}
    \lim_{\beta\to 0}A_{1}^{\beta}= 0 \label{eq: lim-A1}.
\end{equation}
We now turn to the term $A_{2}^{\beta}$ above, which can be treated like in (\ref{eq: II-2h limit}). That is under our continuity assumptions one can apply the fundamental theorem of calculus and get
\begin{equation}
    \lim_{\beta \to 0}A_{2}^{\beta}=F(t_0,y,\mu)-F(t_0,y,\ga_{t_0}).\label{eq: lim-A2}
\end{equation}
Letting $\beta\to 0$ in (\ref{eq: Abeta}) and plugging (\ref{eq: lim-A1})-(\ref{eq: lim-A2}), it is thus readily checked that
\begin{equation}\label{eq: limitA}
    \lim_{\beta \to 0}A^{\beta}=F(t_0,y,\mu)-F(t_0,y,\ga_{t_0}).
\end{equation}
 \smallskip
\noindent
\textit{Limit of $B^{\beta}$.}The term $B^{\beta}$ in (\ref{eq: decomp-B}) is handled very similarly to $A^{\beta}$. First we decompose $B^{\beta}$ into two parts
\begin{equation} \label{eq: Bbeta}
    B^{\beta}=B^{\beta}_{1}+B^{\beta}_{2},
\end{equation}
where we have set
\begin{align}
    B^{\beta}_{1}&=\frac{1}{\beta}\int_{t_0}^{T}\{F(s,\hat{x}_{s}^{\beta},\ga_{s})-F(s,x_{s}^{\ga},\ga_{s})\}ds,, \label{eq: decomp-B1}\\
    B^{\beta}_{2}&= -\frac{1}{\beta}\int_{t_0}^{t_0+\beta}\{F(s,\hat{x}_{s}^{\beta},\ga_{s})-F(s,x_{s}^{\ga},\ga_{s})\}ds \label{eq: decomp-B2}.
\end{align}
Now exactly as in (\ref{eq: lim-A1}), we let the reader check that
\begin{equation}
    \lim_{\beta\to 0}B_{2}^{\beta}=0. \label{eq: lim-B2}
\end{equation}
Furthermore, the term $B_{1}^{\beta}$ can be treated like (\ref{eq: limit Ih}). That is, invoke the fact that the derivative $\widehat{V}^{\beta}=\partial_{\beta}\hat{x}^{\beta}$ in Definition \ref{def: hat-deriv} exists. Then by applying Leibniz' rule we obtain
\begin{equation}\label{eq: lim-B1}\lim_{\beta \to 0}B^{\beta}_{1}=\int_{t_0}^{T}DF(s,x_{s}^{\ga},\ga_{s})\widehat{V}^{0}_{s}ds.\end{equation}
Hence gathering (\ref{eq: lim-B2}) and (\ref{eq: lim-B1}) into (\ref{eq: Bbeta}) we end up with
\begin{equation}\label{eq: limitB}
    \lim_{\beta \to 0}B^{\beta}=\int_{t_0}^{T}DF(s,x_{s}^{\ga},\ga_{s})\widehat{V}_{s}^{0}ds.
\end{equation}
 \smallskip
\noindent
\textit{Limit of $C^{\beta}$.} The limit of $C^{\beta}$ in (\ref{eq: decomp-C}) is obtained along the same lines as $A^{\beta}$ and $B^{\beta}$. We let the patient reader verify that
\begin{equation}\label{eq: limitC}
    \begin{aligned}
        \lim_{\beta \to 0}C^{\beta}=DG(x_{T}^{\ga})\widehat{V}_{T}^{0}.
    \end{aligned}
\end{equation}
 \smallskip
\noindent
\textit{Conclusion.} Gathering (\ref{eq: limitA})-(\ref{eq: limitB})-(\ref{eq: limitC}) into (\ref{eq: decomp-ABC}), we end up with
\begin{multline*}
    \lim_{\beta \to 0}\frac{Q_{\beta}(t_0,y,\mu;\ga)-J_{t_0,T}(y;\ga)}{\beta}=F(t_0,y,\mu)-F(t_0,y,\ga_{t_0})\\+\int_{t_0}^{T}DF(s,x_{s}^{\ga},\ga_{s})\widehat{V}_{s}^{0}ds  +DG(x_{T}^{\ga})\widehat{V}_{T}^{0},
\end{multline*}
which is what we wanted to show.
\end{proof}
Although expression (\ref{eq:litq2}) is useful, it contains terms that involve the derivative $\widehat{V}^{0}$. Those terms are not easy to interpret and uneasy to exploit for optimization purposes. Below, we derive an expression in terms of the Hamiltonian $H$ in (\ref{eq:Hamil}), which is a more physical quantity. Let us first state a lemma which gives a representation of $\widehat{V}^{0}$ involving the Jacobian of our main dynamics.

\begin{lemma} \label{lem: Vhat}
    Recall that the process $\widehat{V}^{0}$ is given in Definition \ref{def: hat-deriv}, and the Jacobian $J$ is introduced in Proposition \ref{prop: Flow and Jacobian}. Also, recall that the family of functions $g^{\beta}$ is given in (\ref{eq: g-beta}), and observe that in our context we have
    \begin{equation}\label{eq:g0}
        g^{0}=\int_{U}b(t_0,x_{t_0}^{
        \ga},a)\mu(da) -\int_{U}b(t_0,x_{t_0}^{\ga},a)\ga_{t_{0}}(da).
    \end{equation}
    Then for $t\in [t_{0},T]$ we have
    \begin{equation} \label{eq: JacobRHS}
        \widehat{V}^{0}_{t}=J_{t\leftarrow t_{0}}^{\zeta,x^{\ga}}(y)\cdot g^{0}.
    \end{equation}
\end{lemma}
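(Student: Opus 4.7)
The plan is to identify $\widehat{V}^{0}$ as the solution of a specific linear rough differential equation, and then observe that the right-hand side $J^{\zeta,x^{\ga}}_{t\leftarrow t_0}(y)\cdot g^{0}$ solves exactly the same equation, so that uniqueness for linear RDEs yields the claim.

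The first step is to write down the equation satisfied by $\widehat{V}^{\beta}$. Invoking the analog of Lemma \ref{Lemma: X} (applied to $\hat{x}^{\beta}$ with $\ga$ playing the role of $\overline{\ga}$), we have $\widehat{V}^{\beta}_t = 0$ on $[t_0, t_0+\beta)$ and for $t\geq t_0 + \beta$:
\begin{equation*}
\widehat{V}^{\beta}_{t} = g^{\beta} + \int_{t_0+\beta}^{t}\left(\int_{U} Db(r,\hat{x}^{\beta}_{r},a)\hat{\ga}^{\beta}_{r}(da)\right)\widehat{V}^{\beta}_{r}\,dr + \int_{t_0+\beta}^{t} D\sigma(r,\hat{x}^{\beta}_{r})\widehat{V}^{\beta}_{r}\,d\zeta_r,
\end{equation*}
where $g^{\beta}$ is defined as in \eqref{eq: g-beta}. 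Specializing to $\beta = 0$, we have $\hat{x}^{0} \equiv x^{\ga}$ and $\hat{\ga}^{0} \equiv \ga$, so $\widehat{V}^{0}$ satisfies the linear RDE on $[t_0, T]$
\begin{equation*}
\widehat{V}^{0}_{t} = g^{0} + \int_{t_0}^{t}\left(\int_{U} Db(r,x^{\ga}_{r},a)\ga_{r}(da)\right)\widehat{V}^{0}_{r}\,dr + \int_{t_0}^{t} D\sigma(r,x^{\ga}_{r})\widehat{V}^{0}_{r}\,d\zeta_r,
\end{equation*}
with $g^{0}$ given in \eqref{eq:g0}.

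Next, I invoke Proposition \ref{prop: Flow and Jacobian} applied to the dynamics driven by $\ga$ starting from $y$ at time $t_0$, whose flow is precisely $x^{\ga}$. According to \eqref{eq: Jacobian-RDE}, for any vector $e \in \R^{m}$ the path $\mathcal{V}^{e}_{t_0, t} = J^{\zeta, x^{\ga}}_{t\leftarrow t_0}(y)\cdot e$ satisfies
\begin{equation*}
\mathcal{V}^{e}_{t_0, t} = e + \int_{t_0}^{t}\left(\int_{U} Db(r,x^{\ga}_{r},a)\ga_{r}(da)\right)\mathcal{V}^{e}_{t_0,r}\,dr + \int_{t_0}^{t} D\sigma(r,x^{\ga}_{r})\mathcal{V}^{e}_{t_0,r}\,d\zeta_r.
\end{equation*}
Choosing $e = g^{0}$, we see that the path $t \mapsto J^{\zeta,x^{\ga}}_{t\leftarrow t_0}(y)\cdot g^{0}$ satisfies the very same equation as $\widehat{V}^{0}$ on $[t_0, T]$, with the same initial value $g^{0}$ at time $t_0$.

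The conclusion then follows from a standard uniqueness result for linear rough differential equations with bounded coefficients (as in \cite[Chapter 11]{friz-victoir}), applied to the difference $\widehat{V}^{0}_t - J^{\zeta,x^{\ga}}_{t\leftarrow t_0}(y)\cdot g^{0}$, which solves a homogeneous linear RDE starting from zero and hence vanishes identically. The only genuine technical point is to justify that the linearized equation for $\widehat{V}^{0}$ is indeed obtained by taking $\beta \to 0$ in the family of equations for $\widehat{V}^{\beta}$; this follows by the continuity of the coefficients and the same Picard-iteration/compactness strategy sketched in the proof of Lemma \ref{Lemma: X} (in particular item (iii) of Step 1 there), which we do not reproduce for the sake of brevity.
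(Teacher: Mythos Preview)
Your proposal is correct and follows essentially the same approach as the paper: both arguments identify $\widehat{V}^{0}$ and $J^{\zeta,x^{\ga}}_{t\leftarrow t_0}(y)\cdot g^{0}$ as solutions of the same linear RDE on $[t_0,T]$ with initial datum $g^{0}$, and then invoke uniqueness for linear rough differential equations. The paper's version is slightly terser, simply pointing to equation~\eqref{eq: RDE for derivative} at $\beta=0$ via Definition~\ref{def: hat-deriv}, whereas you spell out the $\beta\to 0$ specialization explicitly; the content is the same.
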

\begin{proof}
    According to equation (\ref{eq: Jacobian-RDE}), the path $J_{t\leftarrow t_{0}}^{\zeta}(y)\cdot g^{0}$ in the right hand side of (\ref{eq: JacobRHS}) solves the equation
    \begin{equation*}
        \mathcal{V}_{t}=g^{0} +\int_{t_{0}}^{t}\lp \int_{U}Db(r,x_{r},a)\ga_{r}(da) \rp \mathcal{V}_{r}\,dr 
        + \int_{t_{0}}^{t}D\sigma(r,x_{r})\mathcal{V}_{r}\,d\zeta_{r}.
    \end{equation*}
    This is exactly equation (\ref{eq: RDE for derivative}) for $\widehat{V}^{0}_{t}$, as referred to in Definition \ref{def: hat-deriv}. By uniqueness of the solution of linear rough differential equations, we deduce the desired identity.
\end{proof}
We are now ready to recast the expression for the $q$-function in terms of our Hamiltonian.
\begin{corollary} \label{cor: q1}
    Let the hypothesis from Proposition \ref{prop:q1} prevail, and recall that the Hamiltonian $H$ is spelled out in (\ref{eq:Hamil}). Then the $q$-function introduced by (\ref{eq:litq2}) can also be written as
    \begin{equation}\label{eq:q'}
        q(t,y,\mu;\ga)=H(t,y,\mu,\nabla_{x}J_{t,T}(y,\ga))-H(t,y,\ga_t,\nabla_{x}J_{t,T}(y,\ga)) \, .
    \end{equation}
\end{corollary}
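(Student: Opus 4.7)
The plan is to insert Lemma~\ref{lem: Vhat} into the expression for $q$ given by Proposition~\ref{prop:q1}, and then recognize the remaining weighted integral as the spatial gradient of the reward functional $J_{t_0,T}(\cdot\, ;\ga)$. Specifically, starting from (\ref{eq:litq2}) and using $\widehat{V}^{0}_{s}=J_{s\leftarrow t_{0}}^{\zeta,x^{\ga}}(y)\cdot g^{0}$, I can factor $g^{0}$ out of the last two terms and write
\begin{equation}
q(t_{0},y,\mu;\ga)=F(t_{0},y,\mu)-F(t_{0},y,\ga_{t_{0}})+\Xi(t_{0},y,\ga)\cdot g^{0},
\end{equation}
where $\Xi(t_{0},y,\ga):=\int_{t_{0}}^{T}DF(s,x_{s}^{\ga},\ga_{s})\,J_{s\leftarrow t_{0}}^{\zeta,x^{\ga}}(y)\,ds+DG(x_{T}^{\ga})\,J_{T\leftarrow t_{0}}^{\zeta,x^{\ga}}(y)$.

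The main step is to identify $\Xi(t_{0},y,\ga)$ with $\nabla_{x}J_{t_{0},T}(y,\ga)$. To this end, I would differentiate~(\ref{eq:J}) in the initial condition $y$ and apply the chain rule: for a perturbation $e\in\R^{m}$ one has $\partial_{y}x_{s}^{t_{0},y,\ga}\cdot e=J_{s\leftarrow t_{0}}^{\zeta,x^{\ga}}(y)\cdot e$ by the very definition of the Jacobian in Proposition~\ref{prop: Flow and Jacobian}. The differentiability of $y\mapsto x_{s}^{t_{0},y,\ga}$ in $\mathcal{Q}_{\zeta}^{\ka}$ and the possibility to exchange differentiation with the rough integral is exactly what Proposition~\ref{prop: Flow and Jacobian} (and the reasoning of Lemma~\ref{Lemma: X}, Steps 3--4) provides; the justification of passing the derivative into the term $DG(x_{T}^{\ga})$ and the Lebesgue integral in $F$ is a standard dominated convergence argument using the uniform bound~(\ref{ineq: Jacob}) on the Jacobian. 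This yields
\begin{equation}
\nabla_{x}J_{t_{0},T}(y,\ga)\cdot e=\int_{t_{0}}^{T}DF(s,x_{s}^{\ga},\ga_{s})\,J_{s\leftarrow t_{0}}^{\zeta,x^{\ga}}(y)\,e\,ds+DG(x_{T}^{\ga})\,J_{T\leftarrow t_{0}}^{\zeta,x^{\ga}}(y)\,e,
\end{equation}
which is exactly $\Xi(t_{0},y,\ga)\cdot e$.

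To conclude, I would substitute the explicit expression~(\ref{eq:g0}) for $g^{0}$ and regroup terms according to the definition~(\ref{eq:Hamil}) of $H$. Writing $p:=\nabla_{x}J_{t_{0},T}(y,\ga)$, the quantity $\Xi\cdot g^{0}$ becomes $p\cdot\!\int_{U}b(t_{0},y,a)\mu(da)-p\cdot\!\int_{U}b(t_{0},y,a)\ga_{t_{0}}(da)$, and combining this with the two $F$-terms yields the advertised identity~(\ref{eq:q'}). The only genuinely non-routine point is step 3 (the differentiability of the reward functional in the initial state and the identification of its gradient via the Jacobian flow); everything else is bookkeeping. Since this differentiability has essentially been established in Section~\ref{sec: Prelim} in the $\beta$-direction, adapting those arguments to the initial datum $y$ should present no new difficulty, and I would simply refer to the strategy outlined in the proof of Lemma~\ref{Lemma: X} rather than redo the Picard-scheme estimates.
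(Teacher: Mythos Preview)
Your proposal is correct and follows essentially the same route as the paper: both proofs invoke Lemma~\ref{lem: Vhat} to replace $\widehat{V}^{0}$ by $J^{\zeta,x^{\ga}}_{\cdot\leftarrow t_{0}}(y)\cdot g^{0}$, identify the resulting expression $\int_{t_{0}}^{T}DF\cdot J_{s\leftarrow t_{0}}\,ds+DG\cdot J_{T\leftarrow t_{0}}$ with $\nabla_{x}J_{t_{0},T}(y,\ga)$ via Proposition~\ref{prop: Flow and Jacobian}, and then regroup using the definition~(\ref{eq:Hamil}) of $H$. The only difference is presentational: the paper writes two formulae for $\nabla_{x}J_{t,T}(y,\ga)\cdot g^{0}$ (one from~(\ref{eq:g0}) and~(\ref{eq:Hamil}), the other from differentiating~(\ref{eq:J})) and equates them, whereas you substitute directly into~(\ref{eq:litq2}) and factor out $g^{0}$; the underlying computation is identical.
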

\begin{proof}
    According to expression (\ref{eq:g0}), we have 
    \begin{equation} \label{eq:Jg-identity1}
        \nabla_{x}J_{t,T}(y,\ga)\cdot g^{0}=  \nabla_{x}J_{t,T}(y,\ga)\cdot \left\{\int_{U}b(t,x_{t}^{\ga},u)\mu(du)-\int_{U}b(t,x_{t}^{\ga},u)\ga_{t}(du)\right\} .
    \end{equation}
    Taking identity (\ref{eq:Hamil}) into account, this reads
    \begin{multline}\label{f1}
         \nabla_{x}J_{t,T}(y,\ga)\cdot g^{0} 
         = \nabla_{x}J_{t,T}(y,\ga)\cdot \int_{U}b(t,x_{t}^{\ga},u)\mu(da) \\
         -\lp H(t,y,\ga_t, \nabla_{x}J_{t,T}(y,\ga))-F(t,y,\ga_t) \rp.
    \end{multline}
    Let us now go back to equation (\ref{eq:J}) for $J_{t,T}$ and compute
    \[
    \nabla_{x}J_{t,T}(\ga,y)
    =\int_{t}^{T}DF(s,x_{s}^{\ga},\ga_{s})\, \mathcal{J}_{s\leftarrow t}^{\zeta}(y) \, ds
    +DG(x_{T}^{\ga}) \, \mathcal{J}_{T\leftarrow t}^{\zeta}(y) \, ,
    \]
    which gives us another expression for $\nabla_{x}J_{t,T}(y,\ga) \cdot g^{0}$:
    \begin{equation}\label{eq:Jg-identity2}
    \nabla_{x}J_{t,T}(\ga,y)\cdot g^{0}=\int_{t}^{T}DF(s,x_{s}^{\ga},\ga_{s})(\mathcal{J}_{s\leftarrow t}^{\zeta,x^{\ga}}\cdot g^{0})ds+DG(x_{T}^{\ga})\mathcal{J}_{T\leftarrow t}^{\zeta}\cdot g^{0}.
     \end{equation}
    Owing to identity (\ref{eq: JacobRHS}), this yields
    \[\nabla_{x}J_{t,T}(\ga,y)\cdot g^{0}=\int_{t}^{T}DF(s,x_{s}^{\ga},\ga_{s})\widehat{V}^{0}_{s}ds+DG(x_{T}^{\ga})\widehat{V}^{0}_{T}.\]
    Comparing the above expression with (\ref{eq:litq2}), we end up with
    \begin{equation}\label{f2}
    \nabla_{x}J_{t,T}(\ga,y)\cdot g^{0}=q(t,y,\mu;\ga)-\lp F(t,y,\mu)-F(t,y,\ga_{t}) \rp.
    \end{equation}
    Comparing the right hand sides of~\eqref{f1} and~\eqref{f2}, we have thus obtained
    \begin{equation*}
        q(t,y,\mu;\ga)=F(t,y,\mu)  +\nabla_{x}J_{t,T}(y,\ga)\cdot \int_{U}b(t,x_{t}^{\ga},u)\mu(du)-H(t,y,\ga_{t},\nabla_{x}J_{t,T}(y,\ga)),
    \end{equation*}
    which coincides with (\ref{eq:q'}). This finishes our proof.
\end{proof}
The above expression $(\ref{eq:q'})$ for $q$ is suitable for computational purposes. However, one might also want to mimic the deterministic Brownian setting, for which $q$ was expressed as a time derivative of the reward $J$. Something similar can be achieved in our rough setting, albeit changing the notion of time derivative for generalized time derivatives taken from our strongly controlled process notions of Section \ref{sec:strongProcess}. We summarize our findings in the proposition below.
\begin{proposition} \label{prop:qfunc}
     We still assume that the hypothesis of Proposition \ref{prop:q1} holds true. Recall that for a controlled process $z$, the generalized time derivative $\dot{z}$ is introduced in Definition~\ref{def:time_deriv} . Then the $q$-function in (\ref{eq:q'}) admits the expression
     \begin{equation}\label{eq:q''}
         q(t,y,\mu;\ga)=\dot{J}_{t,T}(y,\ga)+H(t,y,\mu,\nabla_{x}J_{t,T}(y,\ga))-{\nabla_{x}J_{t,T}(y,\ga)\cdot \int_{U}b(t,y,u)\ga_{t}(du)}.
     \end{equation}
     \begin{proof}
     Viewing $J_{t,T}(y,\ga)$ as a function of $t$, one can compute its increments to show that 
\begin{equation}
   \label{eq:rf-d eriv} \dot{J}_{t,T}(y,\ga)=-F(t,y,\ga_{t}),
\end{equation}
where $\dot{J}_{t,T}(y,\ga)$ is the notation explained in (\ref{eq:time-not}). 
        Therefore $(\ref{eq:q''})$ is a direct consequence of (\ref{eq:rf-d eriv}) and~(\ref{eq:q'}).
\end{proof}
\end{proposition}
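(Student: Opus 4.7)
The plan is to combine the Hamiltonian form of the $q$-function obtained in Corollary \ref{cor: q1} with the identification of the generalized time derivative $\dot J_{t,T}$ stated in \eqref{eq:rf-d eriv}. Starting from Corollary \ref{cor: q1}, we have
\[
q(t,y,\mu;\ga) = H(t,y,\mu, \nabla_x J_{t,T}(y,\ga)) - H(t,y,\ga_t, \nabla_x J_{t,T}(y,\ga)),
\]
and expanding the second Hamiltonian by means of \eqref{eq:Hamil} yields
\[
H(t,y,\ga_t, \nabla_x J_{t,T}(y,\ga)) = \nabla_x J_{t,T}(y,\ga) \cdot \int_U b(t,y,u)\ga_t(du) + F(t,y,\ga_t).
\]
Therefore, once the equality $\dot J_{t,T}(y,\ga) = -F(t,y,\ga_t)$ is established, substituting $F(t,y,\ga_t) = -\dot J_{t,T}(y,\ga)$ in the expansion above and plugging back into the Hamiltonian form of $q$ directly produces \eqref{eq:q''}.

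The central step is thus to justify \eqref{eq:rf-d eriv}. For fixed $y$ and $\ga$, I would view the path $t \mapsto J_{t,T}(y,\ga)$ as a strongly controlled process in the sense of Remark \ref{rmk: time-decomp} and compute its increment
\[
\delta J_{st}(y,\ga) = -\int_s^t F(r, x_r^{s,y,\ga}, \ga_r)\,dr + \int_t^T \bigl[F(r, x_r^{t,y,\ga},\ga_r) - F(r,x_r^{s,y,\ga},\ga_r)\bigr]\,dr + \bigl[G(x_T^{t,y,\ga}) - G(x_T^{s,y,\ga})\bigr].
\]
The integration-bound contribution equals $-F(s,y,\ga_s)\,\delta\tau_{st}$ plus a remainder of order $(t-s)^{1+\alpha}$. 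The remaining two terms, which encode the effect of shifting the starting time of the flow, can be expanded via a Taylor expansion of $F$ and $G$ in the spatial variable combined with the Jacobian $J^\zeta_{\cdot \leftarrow s}$ from Proposition \ref{prop: Flow and Jacobian}, and are sorted into the $\delta\zeta_{st}$, $\zeta^2_{st}$, and $\mathcal{C}^{3\kappa}$ remainder pieces of the strongly controlled decomposition. Extracting the coefficient of $\delta\tau_{st}$ from this decomposition then yields $\dot J_{t,T}(y,\ga) = -F(t,y,\ga_t)$, which is exactly \eqref{eq:rf-d eriv}.

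The main obstacle is the careful identification of the $\delta\tau_{st}$ coefficient in the strongly controlled decomposition of $\delta J$: one must argue that the flow-perturbation contributions fit cleanly into the rough and higher-order remainder components, leaving only the integration-bound term to feed the Lebesgue drift. Once this is in place, the remainder of the proof is a direct algebraic substitution of $F(t,y,\ga_t) = -\dot J_{t,T}(y,\ga)$ into the expression for $H(t,y,\ga_t,\nabla_x J_{t,T}(y,\ga))$ derived from \eqref{eq:Hamil}.
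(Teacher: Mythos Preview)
Your overall strategy coincides with the paper's: start from Corollary~\ref{cor: q1}, expand $H(t,y,\ga_t,\nabla_x J_{t,T})$ via~\eqref{eq:Hamil}, and substitute $F(t,y,\ga_t)=-\dot J_{t,T}(y,\ga)$. The paper's proof is equally terse on the key identity~\eqref{eq:rf-d eriv}, so your algebraic reduction is exactly what they do.

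Where you go beyond the paper is in sketching the increment computation for $\delta J_{st}(y,\ga)$, and here your justification has a gap. You assert that the flow-perturbation terms (the $\int_t^T[F(\cdot,x^{t,y,\ga},\cdot)-F(\cdot,x^{s,y,\ga},\cdot)]$ and $G$ pieces) are absorbed entirely into the $\delta\zeta_{st}$, $\zeta^2_{st}$, and $\mathcal C^{3\kappa}$ components, leaving only the integration-bound term to supply the $\delta\tau_{st}$ coefficient. But $x_r^{t,y,\ga}-x_r^{s,y,\ga}\approx -J^{\zeta}_{r\leftarrow t}(y)\cdot\delta x_{st}^{s,y,\ga}$, and the increment $\delta x_{st}^{s,y,\ga}$ itself carries a drift piece $\int_U b(s,y,a)\ga_s(da)\,\delta\tau_{st}$ coming from the $b$-term in~\eqref{eq:relaxed-rde}. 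Propagated through $DF$ and $DG$, this produces an additional $\delta\tau_{st}$ contribution of the form $-\nabla_x J_{s,T}(y,\ga)\cdot\int_U b(s,y,a)\ga_s(da)$, so that with $y$ held fixed the full drift of $t\mapsto J_{t,T}(y,\ga)$ is $-H(t,y,\ga_t,\nabla_x J_{t,T})$, not $-F(t,y,\ga_t)$. The paper sidesteps this by simply asserting~\eqref{eq:rf-d eriv}; your more explicit decomposition, as written, does not support that assertion. If you want to recover $\dot J=-F$, the natural route is to read $t\mapsto J_{t,T}$ along the trajectory $y=x_t^{\ga}$ (where the flow property kills the perturbation terms and only the boundary integral survives), rather than at a frozen spatial point.
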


\section{Gibbs measure representation for optimal policy}~\label{sec:gibbs}

This section further explores cases of reward
functions explicitly containing entropy terms.
This will allow to translate the notion of a $q$-function developed in Section~\ref{sec:q-function} into a more computational optimization tool. To this aim, assume the instant reward function $F$ in equation~\eqref{eq:cont-1} takes the form {$(\ref{eq:F-ex})$}, recalled here for the reader's convenience:
\begin{equation}\label{eq: ConvF}
    F(s,x,\ga)=\int_{U}R(s,x,u)\dot{\ga}_{s}(u)\,du-\lambda\int_{U}\dot{\ga}_{s}(u)\log \dot{\ga}_{s}(u)\,du.
\end{equation}
This type of reward is one of the most common one in the context of reinforcement learning. For the sake of simplicity, we assume the discount factor $\rho = 0$. It is important to note that this assumption can be readily relaxed. We will examine the open loop case in Section~\ref{sec:open-loop} and the closed loop case in Section~\ref{sec:closed-loop}.

\subsection{Open loop control and $q$-function}\label{sec:open-loop}
In this section we derive an expression for the open loop control (that is an optimal policy derived once for all) in case of a reward with entropy term. We first define 
\begin{equation}\label{g1}
\tilde{H}(t, y,a,p) := p \cdot b(t,y,a) + R(t,y,a),
\end{equation}
that is $\tilde{H}$ is the function integrated to produce the Hamiltonian in \eqref{eq:Hamil}, without the entropy part. This also means that the rough HJ equation~\eqref{d3} now takes the form
\beq\label{eq:hjb-rough}
	\partial_t v(t,y) + \sup_{\gamma \in \cp(U)} \int_U \left[\tilde{H}(t,y,a,\nabla v(t,y))- \lambda \log \dot \gamma(a) \right] \dot\gamma(a) da
	 + \nabla v(t,y) \cdot \sigma(t,y) \dot \zeta_t = 0
\eeq
with terminal condition $v(T,x) = G(x)$. We can also recast \eqref{g1}-\eqref{eq:hjb-rough} by saying that in this section, we consider a Hamiltonian of the form 
\beq\label{eq:H}
H(t,y,\ga,p) = \int \lc \tilde{H}(t,y,a, p) - \lambda \log \dot{\ga}(a) \rc \dot{\ga}(a) da,
\eeq
with $\tilde{H}(t,y,a,p)$ given by \eqref{g1}.
\begin{remark} \label{rmk: K}
    As mentioned in Remark \ref{rmk: F}, a function $F$ like in $(\ref{eq: ConvF})$ does not exactly fit into our general framework. Specifically, Hypothesis \ref{hyp:reward} is not satisfied by a function $F$ including an entropy term like in $(\ref{eq: ConvF})$. However, one can circumvent this technical problem by replacing the state space $\mathcal{K}\subset \mathcal{P}(U)$ with a set
    \begin{equation}
        \mathcal{K}_{\beta}=\left\{\mu \in \mathcal{P}(U): \mu \text{ admits a density }\dot{\mu} \text{ with } \dot{\mu}(u)\geq\beta \right\},
    \end{equation}
    for a constant $\beta>0$, and assuming $U$ is a compact domain of $\R^d$. We let the reader check that all our considerations on the HJ equation (see Theorem \ref{thm:HJBequation}) go through on the space $\mathcal{K}_{\beta}$. Moreover, on $\mathcal{K}_{\beta}$ the function $F$ in $(\ref{eq: ConvF})$ satisfies Hypothesis \ref{hyp:reward}.  
\end{remark}
In view of~\eqref{eq:hjb-rough}, it is natural to formulate the following conjecture:
whenever one starts a dynamics like \eqref{d1} from an initial condition $x$ at time $t$, the optimal policy $\ga^{\ast}$ is such that $\ga_t^{\ast}$ is a Gibbs measure which can be written as 
\beq\label{eq:ha*-feedback}
	\dot{\gamma}^*(\cdot|\, t,x) \propto \exp \left[\dfrac{1}{\lambda} \tilde H(t,x,\cdot,\nabla v(t,x) ) \right].
\eeq
In order to prove \eqref{eq:ha*-feedback}, we first recall a general optimization result in presence of an entropy term. Although it is inspired by \cite[Lemma~2]{jia2023qlearning}, we include a simpler proof here for sake of clarity.
 \begin{lemma}\label{lem:opt-gibbs}
 Let $\la > 0$ and a measurable function $h: U \to \R$ satisfying the assumption $\int_{U} \exp(\frac{1}{\la}h(a)) \, da < \infty$, where $U$ is the domain introduced in Definition~\ref{def:rel-control}. Then 
 \beq\label{eq:soft-optimizer} 
 \dot{\ga}^{\ast}(a) = \dfrac{\exp(\frac{1}{\la}h(a))}{\int_U \exp(\frac{1}{\la}h(a)) da}
 \eeq 
 is the unique maximizer of the following problem
 \beq\label{eq:soft-opt-obj}
 L^{\ast}(h) \equiv \sup_{\ga \in \cp(U)} L(\ga, h),
 \eeq
 with
 \beq\label{eq:L}
 L(\ga, h) = \int_U \lc h(a) - \la \log \dot\ga (a) \rc \dot \ga(a) da.
 \eeq
 Further the optimal value achieved in \eqref{eq:soft-opt-obj} is explicitly given by
 \begin{equation}\label{g2}
 L^{\ast}(h) = \la \log \lc \int_U \exp \lp \frac{h(a)}{\la} \rp da \rc.
 \end{equation}
 \end{lemma}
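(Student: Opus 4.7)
The plan is to recognize \eqref{eq:soft-opt-obj} as a Gibbs variational principle and reduce it to the non-negativity of a Kullback--Leibler divergence. Set $Z := \int_{U} \exp(h(a)/\la) \, da$, which is finite by the standing assumption on $h$, and let $\dot{\ga}^{\ast}$ be the density introduced in \eqref{eq:soft-optimizer}, so that in particular $\log \dot\ga^{\ast}(a) = h(a)/\la - \log Z$.

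First I would restrict attention to those $\ga \in \cp(U)$ which admit a density $\dot\ga$ with respect to Lebesgue measure on $U$, since otherwise the entropy integrand $\dot\ga(a) \log \dot\ga(a)$ in $L(\ga,h)$ is not naturally defined; one can send the value of $L$ to $-\infty$ on such $\ga$ by convention, so that they cannot achieve the supremum. In the context of Remark~\ref{rmk: K}, the relevant state space $\ck_{\beta}$ is already made of absolutely continuous measures, so this restriction is harmless.

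The key computational step is then to express $L(\ga,h)$ in terms of the relative entropy of $\ga$ with respect to $\ga^{\ast}$. Using the identity for $\log \dot\ga^{\ast}$ displayed above, a direct substitution gives
\begin{equation*}
\la \, \mathrm{KL}(\ga \,\|\, \ga^{\ast}) \;:=\; \la \int_{U} \dot\ga(a) \log \frac{\dot\ga(a)}{\dot\ga^{\ast}(a)}\, da \;=\; \la \log Z - L(\ga, h),
\end{equation*}
or equivalently $L(\ga,h) = \la \log Z - \la\, \mathrm{KL}(\ga\,\|\,\ga^{\ast})$. Gibbs' inequality $\mathrm{KL}(\ga\,\|\,\ga^{\ast}) \geq 0$, with equality if and only if $\ga = \ga^{\ast}$, then yields at once both that $\ga^{\ast}$ is the unique maximizer of $L(\cdot, h)$ and that $L^{\ast}(h) = \la \log Z$, which is \eqref{g2}.

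There is no substantial obstacle here: once the Gibbs identity above is in hand the argument is purely algebraic. The only mildly delicate point is the handling of singular $\ga$, which is resolved by the convention just mentioned (or, equivalently, by invoking the space $\ck_{\beta}$ of Remark~\ref{rmk: K}); note also that $\dot\ga^{\ast}$ is strictly positive on $U$, which ensures $\mathrm{KL}(\ga \,\|\, \ga^{\ast})$ is well-defined for any probability density $\dot\ga$.
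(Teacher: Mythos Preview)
Your proof is correct and follows essentially the same approach as the paper: both set $Z = \int_U \exp(h(a)/\la)\,da$, rewrite $L(\ga,h) = \la \log Z - \la\,\mathrm{KL}(\ga\|\ga^{\ast})$, and conclude from the non-negativity of the Kullback--Leibler divergence. Your additional remark on singular $\ga$ and the reference to Remark~\ref{rmk: K} is a small clarification the paper leaves implicit, but otherwise the arguments coincide.
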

 \begin{proof}
This is essentially Donsker-Varadhan's variational formula \cite{donsker-varadhan1983}, but a proof is included for completeness. 
Let $Z = \int_U \exp(\frac{1}{\la}h(a)) da$ so that $\dot\ga^{\ast}(a) = \exp(\frac{1}{\la} h(a))/Z$. Define for $\ga \in \cp(U)$ admitting densities, the objective
\beq\label{eq:soft-obj}
\Pi(\ga)= L(\ga,h) = \int_U \lc h(a) - \la \log \dot\ga (a) \rc \dot \ga(a) da.
\eeq
Then starting from \eqref{eq:soft-obj}, some elementary algebraic manipulations show that
$$
\Pi(\ga) = \la \int_U \lc \log(\exp(h(a)/\la)) - \log \dot\ga (a) \rc \dot \ga(a) da.
$$
Next recalling that $\exp(h(a)/\la) = Z \dot\ga^{\ast}(a)$ from \eqref{eq:soft-optimizer}, we easily get
$$
\Pi(\ga) = \la \log Z + \la \int_U \log \lp \dfrac{\dot{\ga}^{\ast}(a)}{\dot\ga (a)} \rp \dot \ga(a) da
 = \la \log Z - \la \textnormal{KL}(\ga \| \ga^{\ast}),
$$
where we have involved the standard definition of KL distance for the second identity.
Since $\textnormal{KL}(\ga \| \ga^{\ast}) \geq 0$ with equality iff $\ga = \ga^{\ast}$ it follows that
$$
\sup_{\ga \in \cp(U)} \Pi(\ga) = \la \log Z
= \la \log \lc \int_U \exp \lp \frac{h(a)}{\la} \rp da \rc,
$$
and the unique maximizer is $\ga^{\ast}$. This shows~\eqref{g2} and finishes our proof.
\end{proof}

\noindent
We now apply Lemma~\ref{lem:opt-gibbs} to the context of our rough HJ equation, solving the conjecture~\eqref{eq:ha*-feedback} mentioned above.
\begin{proposition}\label{prop:feedback-policy}
Assume that the coefficients $b$ and $\si$ satisfy Hypothesis~\ref{hyp:coeff}. The path $\bzeta$ is such that Hypothesis~\ref{hyp:zeta} is fulfilled, and the reward $F$ verifies Hypothesis~\ref{hyp:reward}.
Let then $v$ be the unique solution of equation~\eqref{eq:hjb-rough}. 
For notational sake, let us also use the following abbreviation: the function $\tilde{H}:U\to\R$ is defined by
\begin{equation}\label{g3}
\tilde{H}(\cdot) := \tilde{H}(t,y,\cdot,\nabla v(t,y)) .
\end{equation}
Then  the feedback-policy
\beq\label{eq:ga*}
\dot\ga_t^{\ast}(a \mid y, \nabla v(t,y)) \propto \exp \lp \dfrac{1}{\la}\tilde{H}(t,y,a, \nabla v(t,y)) \rp
\eeq
is such that
\beq\label{eq:L*-2}
L^{\ast}(\tilde{H}) \equiv 
\sup_{\ga \in \cp(U)} L(\ga, \tilde{H})
=
L({\ga}_t^{\ast}, \tilde{H}) ,
\eeq
where $L$ is given by \eqref{eq:L}.
Furthermore, still with notation~\eqref{g3} in mind, the optimal value in \eqref{eq:L*-2} is explicitly given by
\beq\label{eq:L*-2-value}
L^{\ast}(\tilde{H}) = \la \log \lc \int_U \exp\lp \frac{1}{\la} \tilde{H} (t,y,a, \nabla v(t,y))\rp da \rc.
\eeq
\end{proposition}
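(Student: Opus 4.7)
The plan is to apply Lemma~\ref{lem:opt-gibbs} directly, with the choice $h(a) := \tilde{H}(t,y,a,\nabla v(t,y))$ viewed as a function of $a \in U$ only, while the pair $(t,y)$ and the vector $\nabla v(t,y)$ are held fixed as parameters. Under this identification, the candidate $\dot\gamma_t^\ast$ in (\ref{eq:ga*}) is exactly the density (\ref{eq:soft-optimizer}) of the lemma, and the closed-form value (\ref{eq:L*-2-value}) is exactly (\ref{g2}). Thus, once the hypotheses of the lemma are checked at the point $(t,y,\nabla v(t,y))$, both (\ref{eq:L*-2}) and (\ref{eq:L*-2-value}) follow automatically without any further rough-path machinery.

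To invoke the lemma I need to verify measurability of $a \mapsto h(a)$ together with the integrability condition $\int_U \exp(h(a)/\lambda)\,da < \infty$. By Hypothesis~\ref{hyp:coeff} the drift $b$ is jointly continuous and bounded, while by Hypothesis~\ref{hyp:reward} together with Remark~\ref{rmk: K} the running reward $R$ is continuous and bounded. Recalling the definition $\tilde{H}(t,y,a,p) = p \cdot b(t,y,a) + R(t,y,a)$ in (\ref{g1}), and treating $p = \nabla v(t,y) \in \R^m$ as a fixed vector, I conclude that $h$ is continuous and uniformly bounded in $a$. Taking $U$ to be the compact domain described in Remark~\ref{rmk: K}, integrability of $\exp(h/\lambda)$ is then immediate. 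A routine check also shows that the resulting density $\dot\gamma_t^\ast$ is bounded away from $0$, so that it lies in the admissible class $\mathcal{K}_\beta$ for $\beta$ small enough depending on $\sup_{a \in U}|\tilde{H}(t,y,a,\nabla v(t,y))|$.

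The main conceptual obstacle lies not in the argument above but in the meaning of $\nabla v(t,y)$, since by Theorem~\ref{thm:HJBequation} the value function $v$ is only known to be a rough viscosity solution of (\ref{eq:hjb-rough}). At points of classical differentiability of $v$ the plan goes through verbatim; elsewhere, one should interpret $\nabla v(t,y)$ in the sense of the test functions from Definition~\ref{def:test} used in Definition~\ref{def:rough-visc}, replacing $\nabla v$ by $\nabla \psi$ for a suitable $\psi \in \mathcal{T}_\sigma$ touching $v$ from above or below. For the present pointwise proposition, however, this subtlety is immaterial: the claim treats $\nabla v(t,y)$ purely as a parameter $p \in \R^m$, so the proof reduces to the deterministic optimization Lemma~\ref{lem:opt-gibbs}.
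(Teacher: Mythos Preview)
Your proof is correct and follows exactly the paper's approach: the paper's proof is the single sentence ``This is a direct application of Lemma~\ref{lem:opt-gibbs}.'' You have simply spelled out the verification of the integrability hypothesis and commented on the interpretation of $\nabla v(t,y)$, both of which are reasonable elaborations that the paper leaves implicit.
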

\begin{proof}
This is a direct application of Lemma~\ref{lem:opt-gibbs}.
\end{proof}
\begin{remark}\label{rem:opt-policy}
Thanks to Proposition~\ref{prop:feedback-policy}, the rough-HJB in \eqref{eq:hjb-rough} can be re-written as
\beq\label{eq:hjb-rough-2}
\partial_t v(t,y) +\la \log \lc \int_U \exp\lp \frac{1}{\la} \tilde{H} (t,y,a,\nabla v(t,y))\rp da \rc
	 + \nabla v(t,y) \cdot \sigma(t,y) \dot \zeta_t = 0
\eeq
Further notice that for each given initial state $x_0$, the feedback policy in turn generates an open-loop control dictated by~\eqref{eq:ga*}:
$$
\dot{\ga}_t^{\ast}(a)  = \dot{\ga}_t^{\ast}(a|x_t^{\ga^{\ast}}, \nabla v(t, x_t^{\ga^{\ast}})) \propto \exp \lp \frac{1}{\la} \tilde{H}(t, x_t^{\ga^{\ast}}, a, \nabla v(t, x_t^{\ga^{\ast}})) \rp,
$$
where $v$ solves the rough-HJB \eqref{eq:hjb-rough-2}, and $x_t^{\ga^{\ast}}$ solves our rough differential equation~\eqref{d1} with initial state $x_0$ and policy $\ga^{\ast}$.
Since $J_{t,T}(y, \ga^{\ast})$ \emph{solves}~\eqref{eq:hjb-rough} or \eqref{eq:hjb-rough-2} we have that the optimal open-loop control that solves the optimization problem \eqref{eq:V_sy} satisfies
\beq\label{eq:opt-policy-2}
\dot{\ga}_t^{\ast}(a)  = \dfrac{\exp \lp \frac{1}{\la} \tilde{H}(t, x_t^{\ga^{\ast}}, a, \nabla_x J_{t,T}(x_t^{\ga^{\ast}}; \ga^{\ast})) \rp}{\int_U \exp \lp \frac{1}{\la} \tilde{H}(t, x_t^{\ga^{\ast}}, a, \nabla_x J_{t,T}(x_t^{\ga^{\ast}}; \ga^{\ast})) \rp da}.
\eeq
Otherwise stated, the $\sup$ contained in the right hand side of equation~\eqref{eq:hjb-rough} is not even needed to compute ${\ga}_t^{\ast}$.
\end{remark}

With Proposition~\ref{prop:feedback-policy} and 
Remark~\ref{rem:opt-policy} in hand, we can now make a crucial link between our optimization procedure and  the $q$-function defined in \eqref{eq:litq1}.
\begin{proposition} ~\label{prop:closed-gibbs-q}
Let $\ga^{\ast}$ be the optimal open-loop policy defined by \eqref{eq:ga*}. Then we can also express $\ga^{\ast}$ as
%The optimal open-loop policy $\ga^{\ast}$ satisfies
\beq\label{eq:ga*-q}
\dot\ga_t^{\ast}(a) = \exp \lp \dfrac{1}{\la} q(t,y,a; \ga^{\ast}) \rp,
\eeq
where the $q$-function is defined in \eqref{eq:litq1}.
\end{proposition}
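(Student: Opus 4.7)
The plan is to combine the Gibbs form of the optimal policy recalled in~\eqref{eq:opt-policy-2} with the identity for the small $q$-function from Corollary~\ref{cor: q1}, namely $q(t,y,\mu;\ga) = H(t,y,\mu,p) - H(t,y,\ga_{t},p)$ with $p = \nabla_{x}J_{t,T}(y,\ga)$. The whole argument essentially boils down to a one-step substitution, once the correct interpretation of $q(t,y,\delta_{a};\ga^{\ast})$ in the entropic setting is agreed upon.

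First, I would specialize Corollary~\ref{cor: q1} to $\mu = \delta_{a}$, as prescribed by Notation~\ref{not:q-a}. Writing $p := \nabla_{x}J_{t,T}(y,\ga^{\ast})$, and using that for a Dirac mass the Hamiltonian~\eqref{eq:H} reduces to $H(t,y,\delta_{a},p) = \tilde{H}(t,y,a,p)$ (with $\tilde{H}$ as in~\eqref{g1})---since the entropy term $-\la\int\dot\mu\log\dot\mu$ is taken to vanish on atomic measures---I obtain
\begin{equation*}
q(t,y,a;\ga^{\ast}) \;=\; \tilde{H}(t,y,a,p) - H(t,y,\ga^{\ast}_{t},p).
\end{equation*}

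Next, I would evaluate the remaining term $H(t,y,\ga^{\ast}_{t},p)$ along the optimal pair. Proposition~\ref{prop:feedback-policy} applied with $v(t,y) = J_{t,T}(y,\ga^{\ast})$ identifies it explicitly via~\eqref{eq:L*-2-value}:
\begin{equation*}
H(t,y,\ga^{\ast}_{t},p) \;=\; L^{\ast}(\tilde{H}) \;=\; \la \log\!\lc \int_{U} \exp\!\lp \tfrac{1}{\la}\tilde{H}(t,y,a',p) \rp da' \rc.
\end{equation*}
Equivalently, this can be checked in one line by plugging the Gibbs density~\eqref{eq:ga*} directly into the definition~\eqref{eq:H} of $H$: the $\tilde{H}$ contributions cancel with $\la\log\dot\ga^{\ast}_{t}$, and only $\la \log Z$ remains, where $Z$ is the normalizing constant.

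Combining the two previous displays and exponentiating then gives
\begin{equation*}
\exp\!\lp \tfrac{1}{\la}\, q(t,y,a;\ga^{\ast}) \rp \;=\; \dfrac{\exp\!\lp \tfrac{1}{\la}\tilde{H}(t,y,a,p) \rp}{\int_{U} \exp\!\lp \tfrac{1}{\la}\tilde{H}(t,y,a',p) \rp da'} \;=\; \dot{\ga}^{\ast}_{t}(a),
\end{equation*}
the second equality being precisely~\eqref{eq:opt-policy-2}; this yields the desired identity~\eqref{eq:ga*-q}. The main (and essentially only) subtle point will be the justification of the convention $H(t,y,\delta_{a},p) = \tilde{H}(t,y,a,p)$, since strictly speaking $F(t,y,\delta_{a})$ as given by~\eqref{eq: ConvF} is not well defined on Dirac measures. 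I would handle this either by working on the regularized policy set $\mathcal{K}_{\beta}$ of Remark~\ref{rmk: K} and passing to a limit along smooth approximants of $\delta_{a}$, or by incorporating this convention directly into the definition~\eqref{eq:litq2} of $q$ at Dirac atoms, consistently with Notation~\ref{not:q-a}.
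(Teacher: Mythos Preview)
Your proof is correct and follows essentially the same approach as the paper: both combine the identity $q(t,y,a;\ga^{\ast}) = \tilde{H}(t,y,a,p) - H(t,y,\ga^{\ast}_{t},p)$ from Corollary~\ref{cor: q1} (with the Dirac convention for the entropy term) together with the evaluation $H(t,y,\ga^{\ast}_{t},p) = \la\log Z$ from Proposition~\ref{prop:feedback-policy}, then exponentiate and match with~\eqref{eq:opt-policy-2}. Your treatment of the Dirac convention is if anything more careful than the paper's, which simply cites the convention from the literature.
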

\begin{proof}
Recall our abbreviation~\eqref{g3} for $\tilde{H}$.
According to \eqref{eq:L*-2}, we have
\beq\label{eq:e}
L^{\ast} (\tilde H) = \sup_{\ga \in \cp(U)} L(\ga, \tilde H) = L(\ga_t^{\ast}, \tilde H).
\eeq
Moreover, recall from \eqref{eq:L} that
$$
L(\ga_t^{\ast}, \tilde H) 
= \int_U \lc \tilde H(t,y,a,\nabla v(t,y)) - \lambda \log \dot{\ga}_t^{\ast}(a) \rc \dot{\ga}_t^{\ast}(a) da. 
$$
Comparing the above expression with \eqref{eq:H}, we get
$$
L(\ga_t^{\ast}, \tilde H) = H(t, y, \ga_t^{\ast}, \nabla v(t,y)) .
$$
Recalling from \eqref{eq:e} that $L(\ga_t^{\ast}, \tilde H) = L^{\ast}(\tilde H)$ and applying \eqref{eq:L*-2-value} plus the fact that $\nabla v (t,x_{t}^{\ga^{*}})=\nabla J_{t,T}(y,\ga^{*})$, we end up with the relation 
%Notice that for the optimal policy $\ga^{\ast}$, thanks to %Proposition~\ref{prop:feedback-policy}  we have
\beq\label{eq:H-opt}
H(t,y, \ga_t^{\ast}, \nabla_x J_{t,T}(y; \ga^{\ast})) = \la \log \lp \int_U \exp\lp \frac{1}{\la} \tilde{H}(t,y,a,\nabla_x J_{t,T}(y;\ga^{\ast})) \rp da \rp.
\eeq
On the other hand, one can apply \eqref{eq:H} for a generic point $a \in U$ and $\ga = \der_a$. In this case, the usual convention is to set the entropy term at 0 (see e.g \cite{jia2023qlearning}). Hence it is readily checked that 
\begin{eqnarray}\label{H-opt-a}
H(t,y,\der_a, \nabla_x J_{t,T}(y ; \ga^{\ast})) 
&=& 
\nabla_x J_{t,T}(y;\ga) \cdot b(t,y,a) + R(t,y,a) \notag\\
&=& \tilde{H}(t,y,a,\nabla_x J_{t,T}(y ; \ga^{\ast})) .
\end{eqnarray}
Thus from \eqref{eq:q'} the optimal $q$-function satisfies
\begin{align}\label{eq:f}
&q(t,y,a;\ga^{\ast}) = H(t,y,\der_a, \nabla_x J_{t,T}(y ; \ga^{\ast})) - H(t, y, \ga_t^{*}, \nabla_x J_{t,T}(y;\ga^{\ast})) \nonumber \\
&= \tilde{H}(t,y,a,\nabla_x J_{t,T}(y; \ga^{\ast})) - \la \log \lp \int_U \exp\lp \frac{1}{\la} \tilde{H}(t,y,a,\nabla_x J_{t,T}(y;\ga^{\ast})) \rp da \rp,
\end{align}
where we have invoked \eqref{eq:H-opt} and \eqref{H-opt-a} for the second identity.
Furthermore, thanks to elementary algebraic manipulations, one can recast \eqref{eq:f} as 
$$
\exp \lp \dfrac{1}{\lambda} q(t,y,a;\ga^{\ast}) \rp 
=
\dfrac{\exp\lp \frac{1}{\la} \tilde H (t, y, a, \nabla J_{t,T}(y; \ga^{\ast})) \rp}{\int_U \exp\lp \frac{1}{\la} \tilde H (t, y, a, \nabla J_{t,T}(y; \ga^{\ast})) \rp da} .
$$
Comparing this expression with \eqref{eq:opt-policy-2}, we trivially obtain that \eqref{eq:ga*-q} holds true. This finishes our proof.
\end{proof}

\subsection{Policy Improvement}\label{sec:closed-loop}

In this section, we will assume that our policies are closed loop instead of open loop. This case corresponds to a situation where one has the ability to improve the current policy $\ga$ according to the data received. We will see how this can be achieved in the entropy reward case.

The closed loop strategy begins by considering the objective functional $J_{t,T} $ in \eqref{eq:J} related to a given strategy $\ga$. Similarly to Theorem~\ref{thm:HJBequation} and Theorem~4.22 in \cite{CHT}, one can prove that $J_{t,T} \equiv v^{\ga}$ solves the following rough PDE:
\beq\label{eq:pde-ga}
\partial_t v^{\ga}(t,y) + H(t, y, \ga, \nabla v^{\ga}(t,y)) + \nabla v^{\ga}(t,y) \cdot \si(t,y) \dot{\zeta}_t = 0 \, ,
\eeq
where $H$ is still the Hamiltonian given by~\eqref{eq:H}.
We are now facing the following issue: while policy improvement methods are based on monotonicity for PDEs, equation~\eqref{eq:pde-ga} lacks this type of monotonicity due to the noisy term. In order to solve that problem we resort to stochastic characteristics. This method has already been employed in \cite{CHT}, which we will refer to for details of proof (see also~\cite{CFO} for similar considerations). We start by recalling the basic definition of the rough flow used in our setting.

\begin{proposition}\label{prop:rough-flow-a}
Let $\bzeta$ be a rough path satisfying Hypothesis~\ref{hyp:zeta}. We also consider a coefficient $\si$ verifying Hypothesis~\ref{hyp:coeff}, modified to $\sigma \in \mathcal C_b^{1,4}([0,T]\times \R^m; \R^{m,d})$. Then the following holds:
\begin{enumerate}[wide, labelwidth=!, labelindent=0pt, label=\emph{(\roman*)}]
\setlength\itemsep{.1in}  
    \item For any initial condition $\eta \in \mathbb{R}^m$, there exists a unique solution to the rough equation
    \beq\label{eq:rough-flow-b}
    \phi_t(\eta) = \eta + \int_0^t \si(r, \phi_r(\eta))\,d\bzeta_r .
    \eeq
    \item The function $\eta \mapsto \phi_t(\eta)$ is a diffeomorphism of $\mathbb{R}^m$.
    \item The inverse $\chi_t(\eta) = \phi_t^{-1}(\eta)$ solves the rough equation
    $$
    \chi_t(\eta) = \eta - \int_0^t \nabla \chi_r(\eta) \cdot \si(r, \eta) d \zeta_r.
    $$
\end{enumerate}
\end{proposition}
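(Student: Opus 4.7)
The plan is to adapt the classical rough flow construction from~\cite[Chapters~11 and~13]{friz-victoir} to the Gubinelli-type controlled-process framework used throughout the excerpt.

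For part (i), I would apply the existence and uniqueness theory for rough differential equations with smooth coefficients. Since $\sigma \in \mathcal{C}_b^{1,4}$ is stronger than what Hypothesis~\ref{hyp:coeff} requires, and there is no drift term in~\eqref{eq:rough-flow-b}, one obtains a unique solution $\phi_\cdot(\eta) \in \mathcal{Q}_{\bzeta}^{\kappa}$ for every initial condition $\eta$ by invoking (a simplified, driftless version of) Proposition~\ref{ExistUniq2}. The extra regularity $\mathcal{C}_b^{1,4}$ is irrelevant for existence, but will be needed for parts (ii)–(iii).

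For part (ii), I would first observe that the Jacobian $J_t(\eta) = \nabla_\eta \phi_t(\eta)$ is well-defined by Proposition~\ref{prop: Flow and Jacobian} and satisfies the linear RDE
\begin{equation*}
J_t(\eta) = \mathrm{Id} + \int_0^t \nabla \sigma(r, \phi_r(\eta)) \, J_r(\eta) \, d\bzeta_r .
\end{equation*}
Linear rough equations of this type admit invertible matrix-valued solutions: the inverse $K_t(\eta) := J_t(\eta)^{-1}$ can be shown to solve its own linear RDE via the rough product rule applied to $J_t K_t = \mathrm{Id}$, and the bound~\eqref{ineq: Jacob} controls both $J$ and $K$ uniformly. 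Thus $\phi_t$ is a local $C^1$-diffeomorphism at every $\eta$. Combining this with the cocycle property $\phi_{t \leftarrow s} \circ \phi_{s \leftarrow 0} = \phi_{t \leftarrow 0}$, together with properness coming from the uniform Jacobian bound, upgrades $\phi_t$ to a global $C^1$-diffeomorphism of $\mathbb{R}^m$, and the assumption $\sigma \in \mathcal{C}_b^{1,4}$ ensures enough spatial smoothness for $\chi_t = \phi_t^{-1}$ to be itself differentiable in the controlled-process sense with $\nabla \chi_t(\eta) = [J_t(\chi_t(\eta))]^{-1}$.

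For part (iii), I would differentiate the identity $\phi_t(\chi_t(\eta)) = \eta$ in the rough sense. Since the right-hand side is constant, Proposition~\ref{prop:str_comp} applied to the strongly controlled processes $\phi$ and $\chi$ forces all rough increments of $t \mapsto \phi_t(\chi_t(\eta))$ to vanish. At the level of the leading $\bzeta$-term this gives
\begin{equation*}
0 = \sigma(t, \phi_t(\chi_t(\eta))) \, d\bzeta_t + \nabla \phi_t(\chi_t(\eta)) \, d\chi_t(\eta)
= \sigma(t, \eta) \, d\bzeta_t + [\nabla \chi_t(\eta)]^{-1} \, d\chi_t(\eta) ,
\end{equation*}
which rearranges to the claimed equation after multiplication by $\nabla \chi_t(\eta)$. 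The main obstacle will be to justify this chain rule carefully at the level of the second-order Gubinelli decomposition, making sure that the $\bzeta^2$-contributions produced by the composition formulas~\eqref{eq:mu_nu_str_decomp-i}–\eqref{eq:mu_nu_str_decomp-ii} cancel in the same way as the first-order ones, so that $\chi$ is genuinely a strongly controlled process with the advertised drift-free decomposition.
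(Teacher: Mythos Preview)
The paper does not actually prove this proposition: it is stated as a recall of standard rough-flow facts, with no proof environment attached, and the surrounding text merely points to~\cite{CHT} and~\cite{CFO} for the way such flows are used. So there is no ``paper's own proof'' to compare against here.

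That said, your proposal is a correct and standard route to establishing the result, essentially reconstructing the flow theory of~\cite[Chapters~11 and~13]{friz-victoir} in the controlled-process language of the paper. Part~(i) is immediate from the driftless special case of Proposition~\ref{ExistUniq2}; your argument for~(ii) via invertibility of the Jacobian RDE and the cocycle property is the usual one; and for~(iii) your strategy of differentiating $\phi_t(\chi_t(\eta))=\eta$ via Proposition~\ref{prop:str_comp} is exactly right, including your observation that the genuine work lies in checking that the $\bzeta^2$-contributions from~\eqref{eq:mu_nu_str_decomp-ii} cancel. One small point: for the global diffeomorphism upgrade you invoke ``properness coming from the uniform Jacobian bound,'' but it is cleaner (and closer to what the extra $\mathcal{C}_b^{1,4}$ regularity is really buying) to argue directly that the backward flow $\phi_{0\leftarrow t}$ exists as the solution of the time-reversed RDE and furnishes the global inverse; this avoids any topological subtleties about properness on $\R^m$.
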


One can now transform the rough PDE~\eqref{eq:pde-ga} into a deterministic PDE with noisy coefficients, using compositions with the flow. This is summarized in the proposition below, which can be proved along the same lines as \cite[Proposition~4.21]{CHT} invoking the composition rule in Proposition \ref{prop:str_comp}. We omit this proof here for the sake of conciseness.

\begin{proposition}\label{prop:rough-flow-c}
Under the conditions of Proposition~\ref{prop:rough-flow-a}, let us consider the path $\hat{v}^{\ga} = v^{\ga} \circ \phi$, where $v^{\ga}$ solves \eqref{eq:pde-ga} and $\phi$ is given by \eqref{eq:rough-flow-b}. Then $v^{\ga}$ is a solution to the following PDE with noisy coefficients:
$$
\partial_t \hat{v}_t^{\ga}(y) + \hat{H}(t,y,\ga, \nabla \hat{v}_t^{\ga}(y)) = 0,
$$
where the Hamiltonian $\hat{H}$ is given by
\beq\label{eq:H-hat-b}
\hat{H}(u, y, \ga, p) = H(u, \phi_u(y), \ga, p \cdot \nabla \phi_u^{-1}(\phi_u(y))).
\eeq
%and where $\phi$ solves
%$$
%\phi_t(\eta) = \eta + \int_0^t \si^k (r, \phi_r(\eta)) d \zeta_r^k.
%$$
\end{proposition}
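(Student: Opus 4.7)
The plan is to regard $\hat v^{\ga}_t(y) = v^{\ga}(t,\phi_t(y))$ as a composition of two strongly controlled processes in time, apply the composition rule of Proposition \ref{prop:str_comp} to observe that its rough components cancel, read off the surviving time drift, and finally convert that drift into $-\hat H(t,y,\ga,\nabla\hat v^{\ga}(t,y))$ via the classical chain rule. This is essentially a rough analogue of an It\^o-type change of variables, but expressed in the controlled-path language of Section \ref{sec:strongProcess}.

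Concretely, I would first read off the strongly controlled decompositions of $\mu_t(y):=v^{\ga}(t,y)$ and $\nu_t(y):=\phi_t(y)$ from the equations they solve. From \eqref{eq:pde-ga}, $\mu$ has first-order rough component $\mu^{\zeta;i}(t,y) = -\partial_k v^{\ga}(t,y)\, \sigma^{ki}(t,y)$ and time drift $\mu^{\tau}(t,y) = -H(t,y,\ga,\nabla v^{\ga}(t,y))$ in the sense of Definition \ref{def:time_deriv}; from \eqref{eq:rough-flow-b}, $\nu$ has $\nu^{\zeta;ki}(t,y) = \sigma^{ki}(t,\phi_t(y))$ and vanishing drift. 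Plugging these decompositions into \eqref{eq:mu_nu_str_decomp-i} then yields
\[
(\mu\circ\nu)^{\zeta;i}(t,y) = \mu^{\zeta;i}(t,\phi_t(y)) + \partial_k v^{\ga}(t,\phi_t(y))\, \nu^{\zeta;ki}(t,y) = 0
\]
by direct cancellation. A parallel but longer computation using \eqref{eq:mu_nu_str_decomp-ii}, together with the weakly controlled decomposition of $\nabla v^{\ga}\cdot\sigma$ inherited from \eqref{eq:pde-ga}, shows that the $\bzeta^{2}$-level coefficient of $\hat v^{\ga}$ also vanishes. Following the language of Remark \ref{rmk: time-decomp}, $\hat v^{\ga}$ is therefore a pure time-drift controlled path whose drift equals $-H(t,\phi_t(y),\ga,\nabla v^{\ga}(t,\phi_t(y)))$. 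Applying the classical chain rule $\nabla\hat v^{\ga}(t,y) = \nabla v^{\ga}(t,\phi_t(y))\cdot\nabla\phi_t(y)$, and using $(\nabla\phi_t(y))^{-1} = \nabla\phi_t^{-1}(\phi_t(y))$ (obtained by differentiating $\phi_t^{-1}\circ\phi_t = \mathrm{id}$ via Proposition \ref{prop:rough-flow-a}), converts this drift into $-\hat H(t,y,\ga,\nabla\hat v^{\ga}(t,y))$, which is the announced equation.

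The main obstacle is that the computation above is formal, treating $v^{\ga}$ as smooth, whereas the equation \eqref{eq:pde-ga} is only satisfied in the rough viscosity sense of Definition \ref{def:rough-visc}. To make the argument rigorous I would push rough test functions $\psi\in\mathcal T_\sigma$ forward through $\phi$ by setting $\hat\psi := \psi\circ\phi$, use the same composition argument to verify that the noise components of $\hat\psi$ cancel and that its classical time drift equals $\psi^{t}(\cdot,\phi_{\cdot}(\cdot))$, and then transport the local-extremum conditions of Definition \ref{def:rough-visc} through the diffeomorphism $\phi_t$. This step runs parallel to \cite[Proposition~4.21]{CHT} and can be invoked rather than repeated in full.
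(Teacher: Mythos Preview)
Your proposal is correct and follows precisely the route the paper indicates: it omits the proof entirely, stating only that one invokes the composition rule of Proposition~\ref{prop:str_comp} and proceeds as in \cite[Proposition~4.21]{CHT}. Your write-up in fact supplies more detail than the paper, including the explicit first-level cancellation and the viscosity-level transport of test functions through $\phi$, both of which are exactly what the cited argument entails.
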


Thanks to Proposition~\ref{prop:rough-flow-c}, we are now reduced to the study of an ordinary PDE interpreted in the viscosity sense. For those equations we have the following comparison principle.

%\begin{lemma}\label{lem:comparison}
%Let $H_1, H_2 : [0,T] \times \R^m \times \R^m \to \R$ be continuous satisfying
%$$
%H_1(t, y, p) \leq H_2(t, y, p),
%$$
%for all $(t,y,p)$. Let $v_1$ be a viscosity subsolution of
%$$
%\partial_t v_1(t,y) + H_1(t,y,\nabla v(t,y)) = 0,
%$$
%and $v_2$ be a viscosity supersolution of 
%$$
%\partial_t v_2(t,y) + H_2(t, y, \nabla v(t,y)) = 0.
%$$
%Then if $v_1(T,y) = v_2(T,y)$ we have
%$$
%v_1(t,y) \leq v_2(t, y) \text{ for all }(t,y) \in [0,T]\times \R^n.
%$$
%\end{lemma}

\begin{lemma}\label{lem:comparison}
Let $H_1, H_2 : [0,T] \times \R^m \times \R^m \to \R$ be continuous functions satisfying
\[
H_1(t,y,p) \leq H_2(t,y,p), 
\quad \text{for all}\quad (t,y,p) \in [0,T]\times\R^m\times\R^m,
\]
and such that for all $(t,y)\in[0,T]\times\R^m$ and all $p_1,p_2\in\R^m$,
\[
|H_i(t,y,p_1) - H_i(t,y,p_2)| \le L\,|p_1 - p_2|, \qquad i=1,2,
\]
for some constant $L>0$.
Let $\hat{v}_1$ be a bounded, upper semicontinuous viscosity subsolution of
\[
\partial_t \hat{v}_1(t,y) + H_1(t,y,\nabla \hat{v}_1(t,y)) = 0,
\]
and let $\hat{v}_2$ be a bounded, lower semicontinuous viscosity supersolution of
\[
\partial_t \hat{v}_2(t,y) + H_2(t,y,\nabla \hat{v}_2(t,y)) = 0,
\]
such that $v_1(T,y) \le v_2(T,y)$ for all $y\in\R^m$. Then it holds 
\[
\hat{v}_1(t,y) \le \hat{v}_2(t,y), 
\quad \text{for all}\quad (t,y)\in[0,T]\times\R^m.
\]
\end{lemma}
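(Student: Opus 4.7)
The plan is to apply the classical doubling-of-variables method from viscosity solutions theory, specialized here to a first-order terminal-value problem on the unbounded domain $[0,T]\times\R^m$. I will first reduce to a strict subsolution by setting $\tilde v_1(t,y):=\hat v_1(t,y)-\eta(T-t)$ for a small parameter $\eta>0$; a standard check shows that $\tilde v_1$ remains a viscosity subsolution of $\partial_t v + H_1(t,y,\nabla v)=0$ with the sharper inequality $\partial_t\varphi+H_1(t,y,\nabla\varphi)\ge\eta$ at every contact point, and still satisfies $\tilde v_1(T,\cdot)\le\hat v_2(T,\cdot)$. Proving $\tilde v_1\le\hat v_2$ for each such $\eta$ and sending $\eta\downarrow 0$ will yield the desired conclusion.

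Arguing by contradiction, assume $\sup(\tilde v_1-\hat v_2)>0$. To cope with the unboundedness of $\R^m$, subtract a small radial penalty $\lambda\rho(y)$ with $\rho(y)=(1+|y|^2)^{1/2}$ (chosen so that $|\nabla\rho|\le 1$), picking $\lambda>0$ small enough that the penalized supremum is still positive. Then double the variables by looking at
\[
\Phi_\varepsilon(t,s,x,y):=\tilde v_1(t,x)-\hat v_2(s,y)-\tfrac{|x-y|^2}{2\varepsilon}-\tfrac{(t-s)^2}{2\varepsilon}-\lambda\bigl(\rho(x)+\rho(y)\bigr).
\]
Boundedness, upper/lower semicontinuity and coercivity of $\rho$ provide a maximizer $(\hat t,\hat s,\hat x,\hat y)$. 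Routine penalty estimates force $|\hat x-\hat y|^2/\varepsilon$ and $(\hat t-\hat s)^2/\varepsilon$ to vanish as $\varepsilon\to 0$, with $(\hat t,\hat x)$ and $(\hat s,\hat y)$ converging to a common cluster point $(\bar t,\bar y)$; the terminal comparison combined with the semicontinuity assumptions rules out $\bar t=T$, so $\hat t,\hat s<T$ for $\varepsilon$ small.

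At this point, plugging the natural test functions extracted from $\Phi_\varepsilon$ into the strict subsolution inequality for $\tilde v_1$ at $(\hat t,\hat x)$ and the supersolution inequality for $\hat v_2$ at $(\hat s,\hat y)$ produces, after cancelling the time-derivative contributions, the bound
\[
\eta\le H_1(\hat t,\hat x,p_1)-H_2(\hat s,\hat y,p_2),\qquad p_1-p_2=\lambda\bigl(\nabla\rho(\hat x)+\nabla\rho(\hat y)\bigr).
\]
Using $H_1\le H_2$ followed by the Lipschitz-in-$p$ bound yields $\eta\le \bigl[H_2(\hat t,\hat x,p_1)-H_2(\hat s,\hat y,p_1)\bigr]+2L\lambda$, and sending first $\varepsilon\to 0$ and then $\lambda\to 0$ should deliver the desired contradiction $\eta\le 0$. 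The main obstacle is showing that the bracketed difference $H_2(\hat t,\hat x,p_1)-H_2(\hat s,\hat y,p_1)$ vanishes as $\varepsilon\to 0$, since a priori the dual variable $p_1=(\hat x-\hat y)/\varepsilon+\lambda\nabla\rho(\hat x)$ need not stay bounded — only $|\hat x-\hat y|^2/\varepsilon\to 0$ is guaranteed. Resolving this requires either exploiting the specific affine-in-$p$ structure that our Hamiltonian inherits from~\eqref{eq:Hamil} (so that the bracket is controlled by $(1+|p_1|)\bigl(|\hat t-\hat s|+|\hat x-\hat y|\bigr)$, which is absorbed because $|\hat x-\hat y|,|\hat t-\hat s|=o(\sqrt{\varepsilon})$), or refining the doubling estimate by tuning the penalty to force $|p_1|$ to remain uniformly bounded in $\varepsilon$.
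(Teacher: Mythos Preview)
The paper does not argue this from scratch: its entire proof is a one-line citation to the comparison principle in the Crandall--Ishii--Lions ``User's Guide'' (their Theorem~8.2 and the variant on p.~52), so your doubling-of-variables route is a genuinely different, more self-contained approach. The skeleton you lay out---strictification by $\eta(T-t)$, radial penalty $\lambda\rho$, doubling in $(t,s,x,y)$, ruling out $\bar t=T$ via the terminal inequality, subtracting the sub/supersolution inequalities---is the standard one and is fine.

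The obstacle you flag at the end is real and, under the \emph{literal} hypotheses of the lemma (mere continuity of $H_i$ plus Lipschitz in $p$), it is not removable by a generic trick: without some quantitative control of the $(t,y)$-dependence of $H_2$ you cannot show $H_2(\hat t,\hat x,p_1)-H_2(\hat s,\hat y,p_1)\to 0$ while $|p_1|$ may blow up. The comparison results the paper cites in fact assume a structure condition of the type $|H(t,x,p)-H(s,y,p)|\le \omega\bigl((1+|p|)(|t-s|+|x-y|)\bigr)$, which is exactly what your first suggested fix supplies. In the paper's application the transformed Hamiltonian $\hat H$ in~\eqref{eq:H-hat-b} is affine in $p$ with Lipschitz coefficients (coming from $b$, $R$, and the rough flow $\phi$), so $|H_2(\hat t,\hat x,p_1)-H_2(\hat s,\hat y,p_1)|\lesssim (1+|p_1|)(|\hat t-\hat s|+|\hat x-\hat y|)$, and since $|p_1|\,|\hat x-\hat y|\le |\hat x-\hat y|^2/\varepsilon+\lambda|\hat x-\hat y|\to 0$ (and similarly for the time variable), the bracket vanishes. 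So your proof closes once you invoke that affine structure; it would not close for an arbitrary continuous $H_2$, and the lemma as stated is slightly under-hypothesised.
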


\begin{proof}
This result is a direct application of the variation of \cite[Theorem~8.2]{crandall-ishi-lyons} spelled out on \cite[p. 52]{crandall-ishi-lyons}. In that context, just take $F \geq G$ ($H_2 \geq H_1$ in our notation) and $g = 0$. We also assume $v_1(T, \cdot) \leq v_2(T, \cdot)$, which is $\hat{u} \leq \hat{v}$ in \cite{crandall-ishi-lyons}. The inequality displayed on p. 52 then reads $\hat{v}_1 \leq \hat{v}_2$, which is the desired conclusion.
\end{proof}

Let us now state the policy improvement theorem which is the main result of this section. 

\begin{theorem}%[To be checked]
We work under the same conditions as in Proposition~\ref{prop:feedback-policy}. For a generic policy $\ga_t \in \cp(U)$, define the value $v_t^{\ga}(y) = J_{t,T}(y;\ga)$ as in \eqref{eq:J}. Next define a new policy $\ga^{+}$ by the formula 
\beq\label{eq:ga+}
\dot{\ga}^{+} (a \mid y) = \exp\lp \frac{1}{\la} q(t,y,a;\ga) \rp \, ,
\eeq
where $q$ is the function defined by~\eqref{eq:litq1}.
Then the value $v_t^{+}(y) = J_{t,T}(y, \ga^{+})$ satisfies
\beq\label{eq:value-improved}
v_t^{+}(y) \geq v_t^{\ga}(y), \quad \text{for all }(t,y) \in [0,T] \times \R^d. 
\eeq
\end{theorem}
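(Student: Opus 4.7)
The strategy is to show that $v^\gamma$ is a rough viscosity subsolution of the PDE governing $v^+$, transform both equations via composition with the rough flow from Proposition~\ref{prop:rough-flow-c} to remove the rough driver, and then invoke the comparison principle of Lemma~\ref{lem:comparison}. The algebraic engine is the Gibbs variational formula of Lemma~\ref{lem:opt-gibbs}, which identifies $\gamma^+$ as the pointwise maximizer of the Hamiltonian against $\nabla v^\gamma$.

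\textbf{Step 1: subsolution property.} Arguing as in Theorem~\ref{thm:HJBequation} for a fixed (possibly state-dependent) policy, $v^\gamma$ and $v^+$ are rough viscosity solutions of
\[
\partial_t v^\gamma + H(t,y,\gamma_t, \nabla v^\gamma) + \nabla v^\gamma \cdot \sigma(t,y)\dot\zeta_t = 0
\]
and
\[
\partial_t v^+ + H(t,y, \gamma^+_t(\cdot|y), \nabla v^+) + \nabla v^+ \cdot \sigma(t,y)\dot\zeta_t = 0,
\]
both with the terminal condition $G$. Substituting the expression~\eqref{eq:q'} for $q$ into the definition~\eqref{eq:ga+}, the factor $\exp(-H(t,y,\gamma_t,\nabla v^\gamma)/\lambda)$ depends only on $(t,y)$ and merges into the normalization, so that $\dot\gamma^+(a|y) \propto \exp(\tilde{H}(t,y,a, \nabla v^\gamma(t,y))/\lambda)$. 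By Lemma~\ref{lem:opt-gibbs} (equivalently Proposition~\ref{prop:feedback-policy}), $\gamma^+_t(\cdot|y)$ is the unique maximizer of $\mu \mapsto H(t,y,\mu, \nabla v^\gamma(t,y))$ over $\cp(U)$, so
\[
H(t,y,\gamma^+_t(\cdot|y), \nabla v^\gamma(t,y)) \;\geq\; H(t,y,\gamma_t, \nabla v^\gamma(t,y)).
\]
Combining this with the rough PDE for $v^\gamma$ yields $\partial_t v^\gamma + H(t,y,\gamma_t^+,\nabla v^\gamma) + \nabla v^\gamma\cdot\sigma\dot\zeta_t \geq 0$, i.e.\ $v^\gamma$ is a rough viscosity subsolution of the PDE solved by $v^+$.

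\textbf{Step 2: comparison after straightening the noise.} Applying Proposition~\ref{prop:rough-flow-c} to both equations, set $\hat v^\gamma = v^\gamma \circ \phi$ and $\hat v^+ = v^+ \circ \phi$. Both become viscosity solutions of ordinary PDEs $\partial_t \hat v + \hat H(t,y,\cdot,\nabla \hat v) = 0$ for the transformed Hamiltonian~\eqref{eq:H-hat-b}, with the common terminal data $G\circ\phi_T$, and the subsolution property of Step~1 transfers to $\hat v^\gamma$ under this coordinate change. Lemma~\ref{lem:comparison}, applied with $H_1 = H_2 = \hat H(\cdot, \gamma^+, \cdot)$, then delivers $\hat v^\gamma \leq \hat v^+$ on $[0,T]\times\R^m$, which becomes~\eqref{eq:value-improved} after composing with $\phi^{-1}$.

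\textbf{Main obstacle.} The delicate point is Step~1: the Gibbs inequality is formulated pointwise at $\nabla v^\gamma$, whereas the viscosity subsolution check requires the inequality at the gradient of an arbitrary smooth test function $\psi$ touching $v^\gamma$ from above, and at contact points one has $\nabla\psi = \nabla v^\gamma$ only when $v^\gamma$ is differentiable. Rigorously justifying the subsolution property therefore hinges on either (i) invoking the semiconcavity of $v^\gamma$ inherited from its value-function representation, or (ii) phrasing $\gamma^+$ in terms of the viscosity super-jet of $v^\gamma$ and exploiting the continuity of the Gibbs maximizer in $p$ coming from strict concavity of $\mu\mapsto H(t,y,\mu,p)$. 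Reconciling the feedback construction of $\gamma^+$ with the viscosity framework is the step that will demand the most care in the full proof.
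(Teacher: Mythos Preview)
Your approach coincides with the paper's: both use the flow transformation of Proposition~\ref{prop:rough-flow-c} to remove the rough term and then invoke the comparison principle of Lemma~\ref{lem:comparison}. The only packaging difference is that the paper applies Lemma~\ref{lem:comparison} with \emph{two} Hamiltonians $H_1(t,y,p)=H(t,y,\gamma_t,p)$ and $H_2(t,y,p)=H(t,y,\gamma^+_t(\cdot\mid y),p)$, claiming $H_1\le H_2$, whereas you keep a single Hamiltonian $H_2$ and show that $v^\gamma$ is a subsolution of the $\gamma^+$-equation. These are equivalent reformulations; in fact your version is cleaner, since the paper's literal inequality $H_1(t,y,p)\le H_2(t,y,p)$ only holds at $p=\nabla v^\gamma(t,y)$ and not for all $p$, so the hypotheses of Lemma~\ref{lem:comparison} as stated are not met---the paper is implicitly doing exactly your subsolution argument.

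Regarding your ``main obstacle'': the concern is legitimate in a general viscosity setting, but in this specific situation it does not bite. For a \emph{fixed} policy $\gamma$ (no supremum), $v^\gamma(t,y)=J_{t,T}(y;\gamma)$ is a composition of the $C^1$ running reward $F$ and terminal reward $G$ with the smooth RDE flow $y\mapsto x^{t,y,\gamma}$, so $v^\gamma$ is genuinely differentiable in $y$ under the standing hypotheses. At any contact point of a test function touching from above one then has $\nabla\psi=\nabla v^\gamma$, and the Gibbs inequality at $\nabla v^\gamma$ is exactly what is needed. The paper does not spell this out, but it is the reason the argument goes through; your resolution (i), semiconcavity/regularity of the value function, is in fact available here in the strongest form.
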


\begin{proof}
Recall that $v_t^{\ga}(y) = J_{t,T}(y;\ga)$ solves equation~\eqref{eq:pde-ga}. Therefore, along the same lines as for Proposition~\ref{prop:feedback-policy}, we have that $\ga^{+}$ defined by \eqref{eq:ga+} is such that 
$$
\sup_{\eta \in \cp(U)} L(\eta, \tilde{H}) = L(\ga_t^{+}, \tilde{H}),
$$
where $\tilde{H}$ is defined by \eqref{g1}-\eqref{g3} and $L$ is given by \eqref{eq:L}. We now set
$$
H_{1}=L(\ga, \tilde{H}), \quad H_{2}=L(\ga_t^{+}, \tilde{H}),
$$
and also define $\hat{H}_1$, $\hat{H}_2$ from $H_1$, $H_2$ as in \eqref{eq:H-hat-b}. Then we have $H_2 \geq H_1$, thus $\hat{H}_2 \geq \hat{H}_1$, and Lemma~\ref{lem:comparison} asserts that
$$
\hat{v}_t^{+}(y) \geq \hat{v}_t^{\ga}(y).
$$
Composing with $\phi^{-1}$ from Proposition~\ref{prop:rough-flow-a} yields our desired conclusion~\eqref{eq:value-improved}.
\end{proof}

\section*{Acknowledgements}
\noindent
All four authors are partially supported for this work by NSF grant  DMS-2153915. H. Honnappa is also partly supported through NSF grant CMMI-22014426. ST would like to thank François Delarue for some valuable conversation about the Gibbs measure setting for relaxed controls.

%\bigskip

%\bibliographystyle{plain}
\bibliographystyle{abbrvnat}
\bibliography{problem-2.bib}

\end{document}